\newtheorem{lemma}{\bf{Lemma} }[section]
\newtheorem{proposition}{\bf{Proposition}}[section]
\newtheorem{theorem}{\bf{Theorem}}[section]
\newtheorem{remark}{\sc{Remark} }[section]
\newtheorem{definition}{\sc{Definition} }[section]
\newtheorem{corollary}{\bf{Corollary} }[section]
\def\build#1_#2{\mathrel{\mathop{\kern 0pt#1}\limits_{#2}}}
\begin{document}

 \title[Thermoelectric problem with power-type boundary effects]{Weak solutions for  a thermoelectric problem with power-type boundary effects}

\author{Luisa Consiglieri}
\thanks{Dedicated to my colleague and friend Eduardo Corregedor Borges Pires}
\address{Luisa Consiglieri, Independent Researcher Professor, European Union}
\urladdr{\href{http://sites.google.com/site/luisaconsiglieri}{http://sites.google.com/site/luisaconsiglieri}}

\begin{abstract} 
This paper deals with thermoelectric problems including the Peltier and Seebeck
effects. The coupled elliptic and doubly quasilinear parabolic equations for the electric and heat currents are stated,
respectively,
accomplished with power-type boundary conditions that describe the thermal radiative effects.
To verify the existence of weak solutions to this coupled problem (Theorem 1),
analytical investigations for abstract  multi-quasilinear elliptic-parabolic systems with nonsmooth data are presented
(Theorem 2 and 3). They are essentially approximated solutions 
 based on the Rothe method. It consists on introducing  time discretized problems,
 establishing their existence, and
then passing to the limit as the time step goes to zero.
The proof of the existence of time discretized solutions relies
 on fixed point and compactness arguments. In this study, we establish quantitative estimates to clarify
 the smallness conditions.
\end{abstract}
\keywords{doubly quasilinear parabolic equation, quantitative estimates,
time discretization,  thermoelectric system}

\subjclass[2010]{35K51, 35R05, 35J62, 35Q79}

\maketitle

\section{Introduction}

The study of the heat equation with constant coefficients 
is a simplification from both mathematical and engineering points of view.
From the real world point of view, constant coefficients are not appropriate 
because the density and the thermal conductivity both depend on the temperature
 itself, and often also on the spatial variable.
 The concern of discontinuous leading coefficient is being
a long matter of study in the mathematical literature, as long as the  works  \cite{morr,stamp60}.
The complete concern is achieved by the doubly quasilinear parabolic equation \cite{alt,beni,caff}.
It is well known that the determination of estimates is the crucial key in the theory of
partial differential equations (PDE), which
involve the so-called universal bounds. With their abstract form, these bounds are only
qualitative and they do not have any practical use on the real world applications. In their
majority, if the proof of estimates should be remade step by step, 
the expression of the qualitative bounds would be truly cumbersome, or even impossible if the contradiction
argument is applied.
 Also regularity estimates have  being a subject of study in the last decades \cite{disser,haller-knees,Ivanov-1982},
  but these ones only occur by admitting data smoothness. 
With this in mind,
our main objective is to find quantitative estimates, \textit{i.e.} their involved constants have an explicit expression,
that are useful on the real applications.
In particular, the quantitative estimates clarify the smallness conditions on the data when 
a fixed point argument is used.

 For  the two-dimensional space situation,
a first attempt on the finding smallness conditions that assure the
 existence and regularity results for some thermoelectric problems
is presented in \cite{anona,lap2017}, where some domain dependent constants
were kept abstract. Indeed, the central existence result  of a weak solution for a class of elliptic systems
on divergence form,
 which  is only 2D valid, is provided under some higher regularity ($W^{1,p}$  regularity, with $p > 2$). The
Gehring-type higher integrability  technique
 makes the smallness conditions quite bizarre. Here, we establish more elegant smallness conditions
 and they are extended to the $n$-dimensional space situation, by finding  weak solutions.
The present model also extends  the thermal effects,  of the previous works \cite{anona,lap2017}, to the unsteady state.

Existence  of solutions for  parabolic-elliptic systems with
nonlinear no-flux boundary conditions  is not a new idea if taking constant coefficients into account \cite{biler}.
Application  of elliptic PDE system in divergence form with 
Dirichlet boundary conditions in doubly-connected domain of the plane 
 are given in \cite{cim} to the problem of electrical heating of a conductor 
 whose thermal and electrical conductivities depend on the temperature
  and to the flow of a viscous fluid in a porous medium, taking into account the Soret and Dufour effects.
In \cite{colli}, the authors deal with a traditional RLC circuit in which a
thermistor has been inserted, representing
the microwave heating process with temperature-induced modulations
on the electric field. In particular, the existence of a solution to a coupled system of three 
differential equations  (an ODE, an elliptic equation and a nonlinear parabolic PDE) 
and appropriate initial and boundary conditions is proved.
A one-dimensional thermal analysis for the performance
of thermoelectric cooler is  conducted in \cite{huang} under
the influence of the Thomson effect, the Joule heating, the Fourier heat conduction, and the radiation and
convection heat transfer.
 Simulation studies have been performed to investigate the thermal balance affected by anode shorting
 in an aluminum reduction cell \cite{cheu}. 

The method of discretization in time, whose basic idea (coming from the implicit Euler
formula) was investigated by Rothe, is a very well-known effective technique for both 
theoretical and numerical analysis, \cite{igbida,kacur,karel} and \cite{mikula,yuan}, respectively (see
also the pioneering work  \cite{alt} of Alt and Luckhaus).
 
This paper is organized as follows. 
The thermoelectric (TE) model is introduced in Section \ref{temodel}. After discussing the physical model,
the main result with respect to this model
is formulated with a  detailed description of the relevant constants.
In Section \ref{sabst}, one abstract model related to the problem under consideration is introduced 
to simplify the proofs of the existence results of time-discretized solutions (Section \ref{sabst}),
 and their corresponding steady-state solutions (Section \ref{sfpt}).
Indeed, 
 the analysis of the problem is structured via 
 two different approaches  to exemplify  alternative assumptions on the data smallness,
namely the existence results of time-discretized solutions (Subsections \ref{sabst1} and \ref{sabst2}),
 and their corresponding steady-state solutions (Subsections \ref{fpt1} and \ref{fpt2}).

\section{The thermoelectric model}
\label{temodel}

 Let $[0, T] \subset {\mathbb R}$ be the time interval with $ T >0$  being an arbitrary (but preassigned) time. 
Let $\Omega$ be a bounded domain  (that is, connected open set) in $\mathbb{R}^n$ ($n\geq 2$).
Its boundary is constituted by two disjoint open $(n-1)$-dimensional sets
 $\partial\Omega =\overline{\Gamma_\mathrm{N}}\cup \overline{\Gamma}$.
We consider $\Gamma_{\rm N}$ over which the Neumann boundary condition is taken into account,
and $\Gamma$ over which the radiative effects may occur.
Each one, $\Gamma_{\rm N}$ and $\Gamma$, may be alternatively of zero $(n-1)$-Lebesgue
measure. Set $Q_T=\Omega\times ]0,T[$ and $\Sigma_T=\Gamma\times ]0,T[$.

The electrical current density $\bf j$ and  the energy flux density ${\bf J}={\bf q}+\phi{\bf j}$, with 
$\bf q$ being the heat flux vector, are given by the constitutive relations
(see \cite{anona} and the references therein)
\begin{eqnarray}\label{pheno1}
{\bf q}&= &- k  (\cdot,\theta) \nabla\theta-\Pi(\cdot,\theta) \sigma(\cdot,\theta) \nabla\phi;\\
{\bf j}&=& -\alpha_{\rm S}(\cdot,\theta) \sigma(\cdot,\theta) \nabla\theta-
\sigma(\cdot,\theta) \nabla\phi .\label{pheno2}
\end{eqnarray} 
Here, $\theta$ denotes the absolute temperature, $\phi$ is the electric potential, 
$\alpha_{\rm S}$ represents the Seebeck coefficient,
and the Peltier coefficient $\Pi(\theta)=\theta\alpha_{\rm s}(\theta)$ is
due to  the first Kelvin relation.
The electrical conductivity $\sigma$, and
the thermal conductivity $k=k_{\rm T}+\Pi\alpha_{\rm s}\sigma$, with $k_T$ denotes the
purely conductive contribution,
are, respectively, the known positive coefficients of Ohm and Fourier laws.

The Seebeck coefficient $\alpha_{\rm S}$ has a constant sign corresponding to the Hall effect.
With positive sign ($\alpha_{\rm S}>0$), there are as examples:
the alkali metals Li, Rb and Cs \cite[p. 17]{bana}, and the noble metals Ag and Au  
\cite[p. 49, 192]{bana} or \cite[p. 71]{mcd}.
With negative sign ($\alpha_{\rm S}<0$), there are as examples:
the alkali metals Na and K \cite[p. 97]{mcd}, the transition metals Fe and Ni  \cite[p. 215]{bana},
 and the semiconductor Pb \cite[p. 48]{bana}.
We refer to \cite[p. 3]{lap2017}, and the references therein, for more examples and 
their increase and decrease behaviors.

Although heat generation starts instantaneously when the current begins to flow,
it takes time before the heat transfer process is initiated to allow the transient conditions to disappear.
Thus, the electrical current density $\bf j$ and  the energy flux density ${\bf J}$
 satisfy
\begin{eqnarray}\label{jj1}
&&\left\{\begin{array}{ll}\nabla\cdot{\bf j}=0 &\mbox{ in }\Omega\\
-{\bf j}\cdot{\bf n}=g &\mbox{ on }\Gamma_{\rm N}\\
{\bf j}\cdot{\bf n}=0 &\mbox{ on }\Gamma
\end{array}\right.\\  \label{jj2}
&&\left\{\begin{array}{ll}
\rho(\cdot,\theta) c_\mathrm{v}(\cdot, \theta)\partial_t \theta -
\nabla\cdot{\bf J}=0 &\mbox{ in } Q_T\\
{\bf J}\cdot{\bf n}=0 &\mbox{ on }\Gamma_{\rm N}\times ]0,T[\\
-{\bf J}\cdot{\bf n}=\gamma(\cdot,\theta) |\theta|^{\ell-2}\theta-h &\mbox{ on }\Sigma_T,
\end{array}\right.
\end{eqnarray}
for $\ell\geq 2$.
Here,  $\rho$ denotes the density, $c_\mathrm{v}$ denotes  the heat capacity (at constant volume),
 $\bf n$ is the unit outward normal to the boundary $\partial\Omega$,
and $g$ denotes the surface current source,

The boundary operators, $\gamma$ and   $h$,  are temperature dependent functions that
express, respectively, the radiative convection  
depending on the wavelength, and the  external heat sources.
For $\ell=5$,  the Stefan-Boltzmann radiation  law says that
$\gamma(T)=\sigma_{\rm SB}\epsilon (T)$ and 
$h(T)=\sigma_{\rm SB} \alpha(T) \theta_\mathrm{e}^{\ell-1}$, where
$\sigma_{\rm SB}=\, 5.67\times 10^{-8}$W m$^{-2} $ K$^{-4}$
 is the Stefan-Boltzmann constant for blackbodies, and
 $ \theta_\mathrm{e}$ denotes an external temperature.
The parameters, the emissivity $\epsilon$ and the absorptivity $\alpha$, both depend on the space variable
and the temperature function $\theta$.
If $\ell=2$, the boundary condition corresponds to the Newton law
of cooling with heat transfer coefficient $\gamma=h/  \theta_\mathrm{e}^{\ell-1}$.

In the framework of Sobolev and Lebesgue functional spaces, we 
use the following spaces of test functions:
\begin{eqnarray*}
V &=& \left\lbrace\begin{array}{l}
V(\Omega) = \left\{ v\in H^{1}(\Omega):\ \int_{\Omega} v \mathrm{dx}=0 \right\}\\
V(\partial\Omega) = \left\{ v\in H^{1}(\Omega):\ \int_{\partial\Omega} v \mathrm{ds}=0 \right\}
\end{array}\right. \\
V_{\ell}(\Omega) &=&\left\{v\in H^{1}(\Omega) :\ v|_{\Gamma }\in L^{\ell}(\Gamma) \right\};\\
V_{\ell}(Q_T) &=&\left\{v\in L^2(0,T; H^{1}(\Omega)) :\ v|_{\Sigma_T }\in L^{\ell}(\Sigma_T) \right\},
\end{eqnarray*}
with their usual norms,  $\ell>1$. Notice that $
V_{\ell}(\Omega)\equiv H^{1}(\Omega) $ if $\ell\leq 2_*$, where $ 2_*$ is the critical trace exponent, \textit{i.e.}
$2_*=2(n-1)/(n-2)$ if $n>2$
and $2_*>1$ is arbitrary if $n=2$.

In the presence of the previous considerations, the temperature-potential pair does not be expectable to be regular
nor even bounded.
The thermoelectric problem is formulated as follows.

\noindent (TE)
 Find the temperature-potential pair
$(\theta,\phi)$ such that  if it verifies the variational problem:
\begin{eqnarray}
\int_0^T\langle \rho(\cdot, \theta) c_\mathrm{v}(\cdot, \theta)\partial_t\theta, v\rangle\mathrm{dt} +
\int_{Q_T}  k (\cdot,\theta)\nabla\theta \cdot \nabla v\mathrm{dx}\mathrm{dt} +
\nonumber\\ +\int_{Q_T} \sigma(\cdot,\theta)\left(
T_\mathcal{M}(\phi)\alpha_{\rm S}(\cdot,\theta)\nabla\theta+(\Pi(\cdot,\theta)+T_\mathcal{M}(\phi))
\nabla\phi\right) \cdot\nabla v\mathrm{dx} \mathrm{dt}+\nonumber \\
+\int_{\Sigma_T} \gamma (\cdot,\theta)|\theta|^{\ell-2} \theta v\mathrm{ds} \mathrm{dt}
= \int_{\Sigma_T} h(\cdot,\theta) v\mathrm{ds} \mathrm{dt}; \label{pbtt} \\
\int_\Omega \sigma(\cdot,\theta)\nabla\phi \cdot \nabla w\mathrm{dx}
+\int_\Omega \sigma(\cdot,\theta) \alpha_{\rm S}(\cdot,\theta)\nabla\theta\cdot \nabla w\mathrm{dx}=
\int_{\Gamma_{\rm N}}g w\mathrm{ds}, \ \mbox{ a.e. in } ]0,T[ , \label{pbph}
\end{eqnarray}
for every $v\in V_{\ell}(Q_T)$ and $w\in V$, where 
 $\langle\cdot,\cdot\rangle$ accounts for the duality product, and 
  $T_\mathcal{M}$ is the $\mathcal{M}$-truncation function defined by $T_\mathcal{M}(z)=
\max(-\mathcal{M},\min(\mathcal{M},z))$.

We assume the following.
  
(H1) The density and the heat capacity $\rho, c_\mathrm{v}: \Omega\times\mathbb{R}\rightarrow\mathbb{R}$ are 
  Carath\'eodory functions, 
{\em i.e.} measurable with respect to $x\in\Omega$ and  continuous with respect to $e\in\mathbb R$.
  Furthermore, they verify
  \begin{equation}
   \exists b^\#,b_\#>0: \quad   b_\#\leq \rho(x,e)c_\mathrm{v}(x,e)\leq b^\# ,
\quad\mbox{for a.e. } x\in \Omega,\quad\forall e \in \mathbb{R} . \label{rmm}
  \end{equation}
  
(H2) The thermal and electrical conductivities  $k,\sigma:\Omega\times\mathbb{R}\rightarrow\mathbb{R}$ are
  Carath\'eodory functions. Furthermore, they verify
  \begin{eqnarray}
  \exists k^\#,k_\#>0:&&
k_\#\leq k(x,e)\leq k^\#;\label{kmm}\\
  \exists \sigma^\#, \sigma_\#>0:&&
\sigma_\#\leq \sigma(x,e)\leq\sigma^\#
\quad\mbox{for a.e. } x\in \Omega,\quad\forall e \in \mathbb{R}. \label{smm}
  \end{eqnarray}
  
(H3) The  Seebeck and Peltier coefficients $\alpha_{\rm S},\Pi:\Omega\times\mathbb{R}\rightarrow\mathbb{R}$ are
  Carath\'eodory functions such that
   \begin{eqnarray}
\exists\alpha^\#>0: &&
|\alpha_{\rm S}(x,e)|\leq \alpha^\#; \label{amax}\\
\exists\Pi^\#>0: &&
|\Pi(x,e)|\leq \Pi^\#,\quad\mbox{for a.e. } x\in \Omega,
\quad\forall e\in \mathbb R.\label{pmax}
  \end{eqnarray}

(H4)   The boundary function  $h$ belongs to   $L^{\ell '}(\Sigma_T)$.

(H5)   The boundary function $g$ belongs to $ L^{2}(\Gamma_{\rm N})$.

  (H6)  The boundary operator $\gamma$ is a Carath\'eodory  function from $\Sigma_T\times\mathbb{R}$
 into $\mathbb{R}$ such that
 \begin{equation}
\exists \gamma_\#, \gamma^\#>0:\quad \gamma_\#\leq \gamma(x,t,e)\leq 
\gamma^\#; \quad\mbox{for a.e. } (x,t)\in \Sigma_T,\quad\forall e\in \mathbb R. \label{gamm}
  \end{equation}
   Moreover,  $\gamma$ is strongly monotone:
\[
 \left(\gamma( u )| u |^{\ell-2} u- \gamma( v )| v |^{\ell-2} v \right) ( u- v)\geq 
\gamma _\# |u - v|^\ell .
\]

Let us state our main existence theorem.
\begin{theorem}\label{tmain}
Let (H1)-(H6) be fulfilled. The thermoelectric problem (TE) admits a solution
 $(\theta,\phi)\in V_\ell(Q_T)\times L^2(0,T; V)$, for $\mathcal{M}$ being such that
 \begin{equation}\label{akM}
   \mathcal{M}\alpha^\#\sigma^\#< k_\#,
\end{equation}
 and
  one of the following hypothesis is assured:
\begin{enumerate}
\item there holds
\begin{equation}
4( k_\# - \mathcal{M}\alpha^\#\sigma^\# ) \sigma_\# > (\sigma^\#)^2 ( \Pi^\#+\mathcal{M}  +\alpha^\#)^2 ;
\label{sss1} 
\end{equation}
\item there holds
\begin{equation}
4( k_\# - \mathcal{M}\alpha^\#\sigma^\# ) >\sigma^\# ( \Pi^\#+\mathcal{M}  +\alpha^\#)^2 ;
\label{sss2} 
\end{equation}
\item  there holds
\begin{equation}\label{sss3}
 k_\#> \sigma^\# \alpha^\# ( 2 \Pi^\#+ 3 \mathcal{M}) .
\end{equation}
\end{enumerate}
\end{theorem}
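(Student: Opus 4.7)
The plan is to recognize problem (TE) as a special instance of the abstract multi-quasilinear elliptic-parabolic system whose existence theory is developed in Theorems 2 and 3, so that the proof of Theorem \ref{tmain} reduces to verifying those abstract hypotheses under (H1)--(H6), (\ref{akM}) and one of (\ref{sss1})--(\ref{sss3}). The $\mathcal{M}$-truncation $T_\mathcal{M}(\phi)$ in (\ref{pbtt}) is precisely what renders the Peltier--Joule-type coupling terms tame, so that every coefficient appearing in the variational formulation is a Carath\'eodory function with explicit bound in terms of $\sigma^\#,\Pi^\#,\alpha^\#$ and $\mathcal{M}$, and the energy flux becomes a bounded perturbation of the diagonal Fourier--Ohm principal part.

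The structural crux is the joint coercivity of the coupled leading operator. Testing (\ref{pbtt}) by $v=\theta$ and (\ref{pbph}) by $w=\phi$ and summing, the principal part equals pointwise
\begin{equation*}
k|\nabla\theta|^2+T_\mathcal{M}(\phi)\alpha_{\rm S}\sigma|\nabla\theta|^2+\sigma|\nabla\phi|^2+\sigma\bigl(\Pi+T_\mathcal{M}(\phi)+\alpha_{\rm S}\bigr)\nabla\theta\cdot\nabla\phi,
\end{equation*}
which, using (H2)--(H3) and $|T_\mathcal{M}(\phi)|\leq\mathcal{M}$, dominates the quadratic form
$(k_\#-\mathcal{M}\alpha^\#\sigma^\#)p^{2}+\sigma_\#\,q^{2}-\sigma^\#(\Pi^\#+\mathcal{M}+\alpha^\#)pq$
in the variables $(p,q)=(|\nabla\theta|,|\nabla\phi|)$. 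Condition (\ref{akM}) secures positivity of the $p^{2}$ coefficient, after which three distinct estimates of the cross term produce the three alternative smallness regimes: the standard determinant-positivity criterion for a $2\times2$ form gives exactly (\ref{sss1}); a pointwise Young weight tuned so that the $q^{2}$ contribution cancels the full $\sigma|\nabla\phi|^{2}$ diagonal (with $\nabla\phi$-coercivity subsequently recovered from (\ref{pbph}) tested independently against $w=\phi$) yields (\ref{sss2}); and a coarser splitting that bounds each of the Peltier, truncated-Joule and Seebeck sub-coefficients separately, using $\sigma^\#\alpha^\#$ as common multiplicative factor, yields (\ref{sss3}). In all three cases, one obtains a uniform control
$\|\nabla\theta\|_{L^{2}(Q_T)}^{2}+\|\nabla\phi\|_{L^{2}(Q_T)}^{2}$ by the lower-order data, which with (H4)--(H6) and the radiation term on $\Sigma_T$ completes the a priori estimates required by the abstract theorems.

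The hardest part will be the limit passage in the Rothe scheme, which is encapsulated in Theorems 2 and 3 but which depends on three subtleties that must coexist: (i) a discrete chain rule compatible with the doubly nonlinear storage $\rho(\theta)c_\mathrm{v}(\theta)\partial_t\theta$, \emph{cf.}\ (\ref{rmm}), in the spirit of Alt--Luckhaus; (ii) strong $L^{2}$-compactness of the time-discretized $\{\theta_\tau\}$ on $Q_T$ and on $\Sigma_T$, needed to pass to the limit inside the Carath\'eodory coefficients $k,\sigma,\alpha_{\rm S},\Pi,\gamma$ and inside the truncation $T_\mathcal{M}(\phi_\tau)$; and (iii) Minty's trick applied to the boundary operator, where the strong monotonicity of $e\mapsto\gamma(\cdot,e)|e|^{\ell-2}e$ supplied by (H6) is exactly what is needed to identify its weak $L^{\ell'}(\Sigma_T)$-limit. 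Once the coercivity verification of the previous paragraph is carried out, each of the alternatives (1)--(3) furnishes a self-contained smallness regime in which the abstract existence theorem applies and delivers the required weak solution $(\theta,\phi)\in V_\ell(Q_T)\times L^{2}(0,T;V)$.
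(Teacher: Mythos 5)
Your reduction of (TE) to the abstract system, with $a=k+T_\mathcal{M}(w)\alpha_{\rm S}\sigma$, $F=\Pi+T_\mathcal{M}(w)$, $a_\#=k_\#-\mathcal{M}\alpha^\#\sigma^\#$, $F^\#=\Pi^\#+\mathcal{M}$, and your derivation of the three smallness regimes from the coercivity of the coupled $2\times2$ quadratic form, is essentially the paper's argument (modulo the detail that the third regime (\ref{sss3}) actually arises from the \emph{staggered} scheme (\ref{wvfttm2})--(\ref{wvfphim2}), where $\phi^m$ is computed from $\theta^{m-1}$ and the estimate (\ref{cotaphi}) is substituted into the $\theta$-equation, rather than from a third splitting of the same form). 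The gap is in your treatment of the limit passage. Theorems \ref{abst1} and \ref{abst2} do \emph{not} encapsulate it: they only deliver the sequence of Rothe approximations $(\theta_M,\phi_M)$ and their uniform bounds; the passage $M\to\infty$ in the time-continuous problem is the substantial remaining content of the proof of Theorem \ref{tmain}, carried out in the last section of the paper.

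Concretely, the terms $T_\mathcal{M}(\phi_M)\nabla\theta_M$ and $\sigma(\theta_M)T_\mathcal{M}(\phi_M)\nabla\phi_M$ are products of two sequences that converge only weakly: $\phi_M$ is bounded merely in $L^2(0,T;V)$, carries no time-derivative estimate, and hence admits no Aubin--Lions compactness, so your item (ii) (strong compactness of $\theta_\tau$) does not let you ``pass to the limit inside the truncation $T_\mathcal{M}(\phi_\tau)$''; the paper explicitly warns that strong convergence of $\nabla\theta_M$ and of $\phi_M$ is unavailable. Two additional devices are needed and are absent from your proposal: first, the limit $\Lambda_1=T_\mathcal{M}(\phi)\nabla\theta$ is identified by a compensated-compactness step --- a Green formula against divergence-free test fields $\mathbf v$ with $\mathbf v\cdot\mathbf n=0$, combined with the parabolic interpolation $L^\infty(0,T;L^2)\cap L^2(0,T;H^1)\hookrightarrow L^q(Q_T)$, $q<2(n+2)/n$, to upgrade $\theta_M\to\theta$ strongly in $L^q(Q_T)$; second, Minty's trick must be run on the \emph{whole coupled} quadratic form (using the joint coercivity constants $(L_1)_\#,(L_2)_\#$ and the test choices $v=\theta_M$, $w=\phi_M$ in the discrete system against $v=\theta$, $w=\phi$ in the limit system) in order to identify $\Lambda_2=\sigma(\theta)T_\mathcal{M}(\phi)\nabla\phi$, not only the boundary limit $\Lambda_3=\gamma(\theta)|\theta|^{\ell-2}\theta$ as in your item (iii). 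Without these two steps the weak limits of the quadratic coupling terms remain unidentified and the proof does not close.
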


\section{Existence of approximated solutions}
\label{sabst}

The thermoelectric problem provides the abstract initial boundary value problem
\begin{eqnarray}\label{strong}
b(\theta)\partial_t \theta -\nabla\cdot \left( a  ( \theta,\phi )\nabla \theta \right)
=  \nabla\cdot(  \sigma( \theta ) F   (\theta,\phi)\nabla\phi) && \\
-\nabla\cdot(\sigma( \theta)\nabla\phi )= \nabla\cdot
\left(  \sigma( \theta ) \alpha_\mathrm{S} (\theta) \nabla \theta\right) && \mbox{ in }Q_T ; \\
\label{equal}
 \left( a  ( \theta,\phi  )\nabla \theta+  \sigma( \theta )  F  ( \theta,\phi )\nabla\phi \right)
\cdot \mathbf{n} =\left(h - \gamma(\theta) |\theta|^{\ell-2} \theta \right)\chi_{\Gamma}  && \\
\left( \sigma( \theta )\nabla \phi +   \sigma( \theta ) \alpha_\mathrm{S}  (\theta)\nabla \theta\right)\cdot{\bf n} =
g \chi_{\Gamma_\mathrm{N}}
&&\mbox{ on } \partial\Omega\times ]0,T[  . \label{wbc}
\end{eqnarray}
This abstract problem is formulated in the form that the coefficients are correlated with
the leading coefficient $\sigma$. We emphasize that this interrelation must be clear.

  Let us assume the hypothesis set.
 
  (H)
The operators $ a,F $ and $b,\sigma,  \alpha_\mathrm{S}$ are  Carath\'eodory functions
 from $\Omega\times\mathbb{R}^2$ and $\Omega\times\mathbb{R}$,  respectively, into  $\mathbb{R}$, 
   which enjoy the following properties.
There exist positive constants $F^\#, a_\#,a^\#, b_\#,b^\#$ such that
\begin{eqnarray}
 | F(x,e,d )|\leq  F^\#; &&\label{isof} \\
a_\#\leq a(x,e,d )\leq a^\#; &&\label{amm} \\
b_\#\leq b (x,e )\leq b^\# &&
\mbox{for a.e. } x\in \Omega,\quad\forall e,d \in \mathbb{R} , \label{bmm}
\end{eqnarray}
and $\sigma_\#,\sigma^\#, \alpha^\#$ verifying
 (\ref{smm}), (\ref{amax}), respectively.

\begin{definition}\label{dweak}
We say that $(\theta,\phi)$ is a weak solution to (\ref{strong})-(\ref{wbc})
if it solves the variational problem
\begin{eqnarray}
\int_0^T\langle b(\cdot, \theta)\partial_t\theta, v\rangle\mathrm{dt} +
\int_{Q_T} a (\cdot,\theta,\phi)\nabla\theta \cdot \nabla v\mathrm{dx}\mathrm{dt}
+\int_{\Sigma_T} \gamma (\cdot,\theta)|\theta|^{\ell-2} \theta v\mathrm{ds} \mathrm{dt}=
\nonumber\\ = - \int_{Q_T} 
 \sigma( \theta ) F  (\cdot,\theta,\phi)\nabla\phi  \cdot\nabla v\mathrm{dx} \mathrm{dt}
+\int_{\Sigma_T} h(\cdot,\theta) v\mathrm{ds} \mathrm{dt}; \label{wvftta} \\
\int_\Omega \sigma(\cdot,\theta)\nabla\phi \cdot \nabla w\mathrm{dx}
+ \int_\Omega  \sigma(\cdot, \theta )  \alpha_\mathrm{S}  (\cdot,\theta)\nabla\theta \cdot \nabla w\mathrm{dx} =
\int_{\Gamma_{\rm N}}g w\mathrm{ds},  \mbox{ a.e. in } ]0,T[ ,\label{wvfpha}
\end{eqnarray}
for every $v\in V_{\ell}(Q_T)$ and $w\in V$.
\end{definition}

We define an auxiliary operator. Denote by $B$ the operator from $H^1(\Omega)$ into $L^2(\Omega)$ defined by
\begin{equation}\label{defb}
B(v)=\int_0^v b(\cdot,z)\mathrm{dz},
\end{equation}
for all $v\in H^1(\Omega)$.

Different approaches in the finding of solutions according to Definition \ref{dweak} provide different
smallness conditions (\ref{afg}), (\ref{sfg}) or (\ref{asfg}). We emphasize that the difference between these
smallness conditions has its importance in the real-world applications.
\begin{theorem}\label{abst1}
Let (H) and (H4)-(H6) be fulfilled.
If there exists $\varepsilon>0$ such that one the following relation holds, that is, either
\begin{equation}
a_\# >\varepsilon\sigma^\#(F^\#+\alpha^\#)/2\quad\mbox{ and } \quad \varepsilon\sigma_\# >\sigma^\#
(F^\#+\alpha^\#)/2, 
\label{afg} 
\end{equation}
or
\begin{equation}
a_\# >\varepsilon\sqrt{\sigma^\#}(F^\#+\alpha^\#)/2 \quad\mbox{ and } \quad \varepsilon>\sqrt{\sigma^\#}
(F^\#+\alpha^\#)/2, \label{sfg}
\end{equation}
then the variational problem  (\ref{wvftta})-(\ref{wvfpha}) admits a sequence of approximate solutions
 $\lbrace(\theta_M,\phi_M)\rbrace_{M\in\mathbb{N}}$ 
  in the  sense established in Section \ref{sabst1}.
\end{theorem}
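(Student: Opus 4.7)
My plan is to carry out the Rothe time-discretization scheme. Fix $M \in \mathbb{N}$, set $\tau = T/M$, and replace $b(\theta)\partial_t\theta=\partial_t B(\theta)$ by the incremental quotient $\tau^{-1}(B(\theta^n)-B(\theta^{n-1}))$, with $B$ as in (\ref{defb}). At each time level $n=1,\dots,M$, I seek $(\theta^n,\phi^n)\in V_\ell(\Omega)\times V$ solving the stationary coupled system
\begin{gather*}
\int_\Omega \tau^{-1}\bigl(B(\theta^n)-B(\theta^{n-1})\bigr)v\, \mathrm{dx}
+\int_\Omega a(\cdot,\theta^n,\phi^n)\nabla\theta^n\cdot\nabla v\, \mathrm{dx}
+\int_\Gamma \gamma(\cdot,\theta^n)|\theta^n|^{\ell-2}\theta^n v\, \mathrm{ds} \\
=-\int_\Omega \sigma(\cdot,\theta^n)F(\cdot,\theta^n,\phi^n)\nabla\phi^n\cdot\nabla v\, \mathrm{dx}
+\int_\Gamma h^n v\, \mathrm{ds}, \\
\int_\Omega \sigma(\cdot,\theta^n)\nabla\phi^n\cdot\nabla w\, \mathrm{dx}
+\int_\Omega \sigma(\cdot,\theta^n)\alpha_\mathrm{S}(\cdot,\theta^n)\nabla\theta^n\cdot\nabla w\, \mathrm{dx}
=\int_{\Gamma_\mathrm{N}}g^n w\, \mathrm{ds},
\end{gather*}
for every $(v,w)\in V_\ell(\Omega)\times V$, where $h^n, g^n$ are time-averaged data and $\theta^0$ is the prescribed initial datum. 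The approximate solution $(\theta_M,\phi_M)$ is then the piecewise-constant-in-time interpolant assembled from $(\theta^n,\phi^n)_{n=0}^M$.

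Existence at each time step would be obtained via Schauder's fixed point theorem. Given $(\tilde\theta,\tilde\phi)\in L^2(\Omega)^2$, I define $\mathcal{S}(\tilde\theta,\tilde\phi)=(\theta,\phi)\in V_\ell(\Omega)\times V$ as the unique solution of the decoupled linearized system obtained by freezing every Carath\'eodory nonlinearity at $(\tilde\theta,\tilde\phi)$: the $\theta$-equation is solvable by a Br\'ezis-type pseudomonotonicity argument---the $\ell$-power Robin term, together with $a_\#$ and $\tau^{-1}b_\#$, providing coercivity on $V_\ell(\Omega)$---and the $\phi$-equation by Lax--Milgram once $\theta$ is in hand. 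Compactness of $\mathcal{S}$ in $L^2(\Omega)^2$ follows from the Rellich embedding $H^1(\Omega)\hookrightarrow L^2(\Omega)$, and its continuity from the Carath\'eodory hypotheses in (H).

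The crucial step, which renders the two smallness alternatives visible, is the self-mapping property of $\mathcal{S}$ on a sufficiently large ball. Testing the $\theta$-equation with $\theta$ and the $\phi$-equation with $\phi$, then adding and invoking (\ref{amm}), (\ref{smm}), (\ref{bmm}), the discrete time-derivative term contributes the coercive piece $\tau^{-1}b_\#\|\theta\|_{L^2}^2$ (the part involving $B(\theta^{n-1})$ being absorbed by Young's inequality into the data remainder), leaving an inequality of the form
\[
a_\# \|\nabla\theta\|_{L^2}^2 + \sigma_\# \|\nabla\phi\|_{L^2}^2 \leq \sigma^\#(F^\#+\alpha^\#)\|\nabla\theta\|_{L^2}\|\nabla\phi\|_{L^2} + \mathcal{R},
\]
where $\mathcal{R}$ denotes data-controlled terms involving $h^n, g^n, \theta^{n-1}$. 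Applying Young's inequality with parameter $\varepsilon>0$ directly to the cross term forces $a_\#>\varepsilon\sigma^\#(F^\#+\alpha^\#)/2$ and $\varepsilon\sigma_\# > \sigma^\#(F^\#+\alpha^\#)/2$, which is precisely (\ref{afg}). Alternatively, estimating the cross term as $\leq(F^\#+\alpha^\#)\sqrt{\sigma^\#}\|\nabla\theta\|_{L^2}\|\sqrt{\sigma}\nabla\phi\|_{L^2}$ and absorbing against the weighted coercivity $\int_\Omega \sigma|\nabla\phi|^2\geq\|\sqrt{\sigma}\nabla\phi\|_{L^2}^2$ of the elliptic equation yields (\ref{sfg}). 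Either alternative delivers the sought ball-invariance of $\mathcal{S}$, hence a fixed point.

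The principal obstacle I anticipate is the continuity of $\mathcal{S}$ through the nonlinear Robin term $\gamma(\cdot,\theta)|\theta|^{\ell-2}\theta$ in the supercritical range $\ell>2_*$, where the compact trace embedding $H^1\hookrightarrow L^\ell(\Gamma)$ is no longer available directly. The plan is to exploit the strong monotonicity postulated in (H6) to upgrade weak-$H^1$ convergence to strong-$L^\ell(\Gamma)$ convergence of the traces along the fixed-point iteration, after which Vitali's theorem dispatches the remaining nonlinear interior terms.
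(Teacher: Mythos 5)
Your overall architecture (Rothe scheme, fixed point at each time level, two applications of Young's inequality producing (\ref{afg}) and (\ref{sfg})) is the paper's, but there are two genuine gaps. First, your auxiliary problem is not actually decoupled by freezing the Carath\'eodory coefficients at $(\tilde\theta,\tilde\phi)$: the $\theta$-equation still contains $\nabla\phi^n$ of the \emph{unknown} through the term $\int_\Omega\sigma F\,\nabla\phi^n\cdot\nabla v$, and the $\phi$-equation contains $\nabla\theta^n$. You cannot ``solve the $\phi$-equation by Lax--Milgram once $\theta$ is in hand'' while also solving the $\theta$-equation first; and if you instead freeze $\nabla\tilde\phi$ in the $\theta$-equation, your input $(\tilde\theta,\tilde\phi)\in L^2(\Omega)^2$ is not regular enough, and the summed energy inequality you write (with the cross term $\sigma^\#(F^\#+\alpha^\#)\|\nabla\theta\|\|\nabla\phi\|$) is no longer what the two tested equations give, so the ball-invariance does not close as stated. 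The paper keeps the frozen problem as a \emph{coupled linear} system in $(\nabla\theta,\nabla\phi)$ with the $2\times2$ leading matrix $\mathsf{L}(\mathbf{u})$ and solves it by Browder--Minty; the smallness conditions (\ref{afg}) or (\ref{sfg}) are exactly what make this matrix coercive (the constants $(L_1)_\#,(L_2)_\#$ of (\ref{L1c})--(\ref{L2c})), so they are needed already for the \emph{solvability} of the auxiliary problem, not merely for the invariance of a ball. You should either adopt that coupled formulation, or spell out a genuinely decoupled iteration with its own (different) bookkeeping.

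Second, the theorem asserts the existence of the sequence ``in the sense established in Section \ref{sabst1}'', which includes the uniform-in-$M$ estimates and the compactness needed to make $\{(\theta_M,\phi_M)\}$ a usable approximating sequence: the level-set functional $\Psi$, the telescoping estimate $\sum_i\int_\Omega(B(\theta^i)-B(\theta^{i-1}))\theta^i\geq\int_\Omega\Psi(\theta^m)-\int_\Omega\Psi(\theta^0)$, the time-translate estimate (\ref{cbb}), and the Alt--Luckhaus lemma (Lemma \ref{lbmm2}) giving $B(\theta_M)\to B(\theta)$ in $L^1(Q_T)$, a.e.\ convergence of $\theta_M$, and the identification $Z=\partial_t B(\theta)$. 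None of this appears in your plan, and it is the technically decisive part: without it the doubly quasilinear term $\partial_t B(\theta)$ and the nonlinear boundary term cannot be passed to the limit. Your closing remark about upgrading trace convergence via the strong monotonicity in (H6) is reasonable, but note the paper instead identifies the weak limit of $\gamma(\cdot)|\theta_m|^{\ell-2}\theta_m$ by a Minty-type argument combined with the coercivity of $\mathsf{L}$, which avoids needing strong $L^\ell(\Gamma)$ convergence.
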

The proof of Theorem \ref{abst1} relies on the limit solution to the
 recurrent sequence of time-discretized problems
\begin{eqnarray}
\frac{1}{\tau}\int_\Omega B (\theta^m) v\mathrm{dx}+\int_\Omega
 a (\theta^m,\phi^m )\nabla \theta^{m}\cdot\nabla v\mathrm{dx}
+\int_\Gamma \gamma(\theta^m)|\theta^m|^{\ell-2} \theta^{m} v \mathrm{ds}   +\nonumber \\ +
\int_\Omega\sigma (\theta^m) F( \theta^m,\phi^m ) \nabla\phi^m \cdot\nabla v\mathrm{dx}
\label{wvfttm} = 
\frac{1}{\tau}\int_\Omega B (\theta^{m-1} )v\mathrm{dx}
+\int_{\Gamma}  h_{m} v  \mathrm{ds} ; \\
\int_\Omega\sigma (\theta^m)\nabla\phi^m\cdot\nabla w\mathrm{dx}
 + \int_\Omega \sigma (\theta^m) \alpha_\mathrm{S}(\theta^m)
\nabla\theta^{m} \cdot \nabla w \mathrm{dx}=\int_{\Gamma_\mathrm{N}}  g w \mathrm{ds} ,\label{wvfphim}
\end{eqnarray}
where $\tau$ is  the so called time step,  $B$ is defined in (\ref{defb}), $m\in \mathbb{N}$ and $h_m$
is conveniently chosen in  Section \ref{tdt} (the time discretization technique). 
We call $\phi^m$ the corresponding solution to the time independent temperature $\theta^{m}$.

\begin{theorem}\label{abst2}
Let (H) and (H4)-(H6) be fulfilled.
  If there holds 
\begin{equation}\label{asfg}
 a_\#>2\sigma^\# \alpha^\#  F^\# ,
\end{equation}
then the variational problem  (\ref{wvftta})-(\ref{wvfpha}) admits a sequence of approximate solutions
 $\lbrace(\theta_M,\phi_M)\rbrace_{M\in\mathbb{N}}$ 
  in the  sense established in Section \ref{sabst2}.
\end{theorem}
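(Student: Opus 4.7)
The plan is to follow the time-discretization strategy already used for Theorem \ref{abst1}: construct the approximate solutions $(\theta_M,\phi_M)$ as step-in-time interpolants of the solutions $(\theta^m,\phi^m)$, $m=1,\ldots,M$, of the time-discretized system (\ref{wvfttm})-(\ref{wvfphim}) with step $\tau=T/M$; derive $M$-uniform bounds; and identify them as the desired approximate-solution sequence in the sense of Section \ref{sabst2}. The only distinction from Theorem \ref{abst1} lies in closing the a priori estimates under the single smallness condition (\ref{asfg}) in place of the paired conditions (\ref{afg}) or (\ref{sfg}).

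For the discrete existence at each step $m$, given $\theta^{m-1}$, I would employ a Schauder-type fixed-point map $\Lambda$ on $L^{2}(\Omega)$: given a trial $\bar\theta$, first solve the linear elliptic problem (\ref{wvfphim}) for $\phi=\phi[\bar\theta]$ by Lax-Milgram (coercivity from (\ref{smm})); then freeze $\bar\theta$ and $\phi[\bar\theta]$ in the Carath\'eodory coefficients $a(\cdot,\bar\theta,\phi[\bar\theta])$, $\sigma(\bar\theta)F(\cdot,\bar\theta,\phi[\bar\theta])$, $\gamma(\bar\theta)$ of (\ref{wvfttm}) and solve the resulting elliptic problem, which has a strongly monotone boundary term (by (H6)) and uniformly elliptic leading part (by (\ref{amm})), through standard monotone-operator theory. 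Continuity and compactness of $\Lambda$ follow from the Carath\'eodory hypotheses and the compact embedding $H^{1}(\Omega)\hookrightarrow L^{2}(\Omega)$; invariance of a ball is provided by the a priori estimate below.

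The $M$-uniform estimate is where (\ref{asfg}) enters. Testing (\ref{wvfphim}) with $w=\phi^{m}$ gives, via (\ref{smm}) and (\ref{amax}), a control of $\|\nabla\phi^{m}\|_{L^{2}(\Omega)}$ in terms of $\|\nabla\theta^{m}\|_{L^{2}(\Omega)}$ plus data. Testing (\ref{wvfttm}) with $v=\theta^{m}$, summing over $m=1,\ldots,M$, handling the discrete time derivative via the convex primitive $\Psi$ of $b$ (through the chain-rule inequality $\sum_{m}\int_\Omega(B(\theta^{m})-B(\theta^{m-1}))\theta^{m}\geq\int_\Omega(\Psi(\theta^{M})-\Psi(\theta^{0}))$) and using the boundary monotonicity (H6), one obtains
\[
\int_\Omega\Psi(\theta^{M})+\sum_{m}\tau\bigl(a_\#\|\nabla\theta^{m}\|^{2}+\gamma_\#\|\theta^{m}\|_{L^{\ell}(\Gamma)}^{\ell}\bigr)\leq\sum_{m}\tau\,\sigma^\# F^\#\|\nabla\phi^{m}\|\,\|\nabla\theta^{m}\}+\mathrm{(data)}.
\]
A carefully tuned pair of Young splittings --- one of the cross term $\|\nabla\phi^{m}\|\,\|\nabla\theta^{m}\|$, and another absorbing the resulting $\|\nabla\phi^{m}\|^{2}$ via the preceding $\phi$-bound --- reduces the effective coefficient of $\|\nabla\theta^{m}\|^{2}$ on the right-hand side to exactly $2\sigma^\#\alpha^\# F^\#$, so that the strict inequality (\ref{asfg}) closes the bound.

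The main obstacle, and the technical core of the proof, is this simultaneous tuning of the Young splittings so that the critical threshold is precisely $2\sigma^\#\alpha^\# F^\#$ and no larger: any looser splitting would require a stronger hypothesis than (\ref{asfg}) and make the theorem collapse onto Theorem \ref{abst1}. Once the uniform bounds on $\theta_{M}$ in $L^{\infty}(0,T;L^{2}(\Omega))\cap L^{2}(0,T;H^{1}(\Omega))\cap L^{\ell}(\Sigma_T)$ and on $\phi_{M}$ in $L^{2}(0,T;V)$ are secured, the step-in-time interpolants $(\theta_{M},\phi_{M})$ satisfy the definition of approximate solutions of Section \ref{sabst2}.
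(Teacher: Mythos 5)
You have the right overall architecture (Rothe discretization, a per\-/step fixed point built on the sequential solve $\bar\theta\mapsto\phi[\bar\theta]\mapsto\theta$, uniform bounds, piecewise\-/constant interpolants), and your per\-/step construction is essentially the paper's Subsection \ref{fpt2} (Propositions \ref{propphi}--\ref{contnt2}). However, you attach it to the wrong discrete scheme: you take $(\theta^m,\phi^m)$ to solve (\ref{wvfttm})--(\ref{wvfphim}), the fully implicit coupled scheme underlying Theorem \ref{abst1}, whereas Theorem \ref{abst2} is explicitly about the semi\-/implicit scheme (\ref{wvfttm2})--(\ref{wvfphim2}), in which the electric equation is evaluated at the \emph{previous} time level, so that $\phi^m$ is determined by $\theta^{m-1}$. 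This is not cosmetic: the decoupling is exactly what makes the single condition (\ref{asfg}) the operative hypothesis. In the paper, the bound $\|\sqrt{\sigma(u)}\nabla\phi\|_{2,\Omega}\le\sqrt{\sigma^\#}\alpha^\#\|\nabla u\|_{2,\Omega}+C\|g\|_{2,\Gamma_{\rm N}}$ of Proposition \ref{propphi} is applied with $u$ the trial (respectively previous) temperature, and (\ref{asfg}) enters through the ball invariance $\mathcal{T}(\overline{B_R})\subset\overline{B_R}$ of Proposition \ref{bound} --- which also requires the step restriction $\tau\le a_\#/b_\#$ that you omit --- not, as you claim, through a tuned Young splitting in the summed energy identity. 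The factor $2$ in (\ref{asfg}) arises from squaring that $\phi$-bound via $(x+y)^2\le 2(x^2+y^2)$ inside (\ref{cotatt}) and (\ref{defr}); your summed estimate for the coupled scheme would naturally produce the coefficient $\sigma^\#\alpha^\# F^\#$ on $\|\nabla\theta^m\|^2$, so the assertion that the threshold comes out to be "exactly" $2\sigma^\#\alpha^\# F^\#$ is not substantiated and does not reflect where the constant actually comes from.

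The second gap is that "the sense established in Section \ref{sabst2}" is not only the uniform bounds: it carries the convergences of Propositions \ref{ttphi} and \ref{pbttm}, namely the bound on the discrete time derivative $\partial_t\widetilde{B}(\theta_M)$ in $L^{\ell'}(0,T;(V_\ell(\Omega))')$, the Alt--Luckhaus time\-/translation estimate (\ref{cbb}), and the consequent strong convergence $B(\theta_M)\to B(\theta)$ in $L^1(Q_T)$ together with a.e.\ convergence of $\theta_M$ (Lemma \ref{lbmm2}). None of this appears in your sketch, yet it is the analytically substantive part of Sections \ref{sabst1}--\ref{sabst2}: without it the interpolants are not approximate solutions in the required sense, and the nonlinear terms --- in particular the boundary term $\gamma(\theta_M)|\theta_M|^{\ell-2}\theta_M$ and the coefficient nonlinearities --- cannot be identified in the limit. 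To repair the proposal, replace (\ref{wvfphim}) by (\ref{wvfphim2}), move the use of (\ref{asfg}) to the invariance of the per\-/step ball (with the accompanying restriction on $\tau$), and supply the time\-/compactness estimates that Section \ref{sabst2} inherits \emph{mutatis mutandis} from Subsection \ref{sabst1}.
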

The proof of Theorem \ref{abst2} relies on the limit solution to the
 recurrent sequence of time-discretized problems
\begin{eqnarray}
\frac{1}{\tau}\int_\Omega B (\theta^m) v\mathrm{dx}+\int_\Omega
 a (\theta^m,\phi^m )\nabla \theta^{m}\cdot\nabla v\mathrm{dx}
+\int_\Gamma \gamma(\theta^m)|\theta^m|^{\ell-2} \theta^{m} v \mathrm{ds}  +\nonumber \\ +
\int_\Omega\sigma (\theta^m) F( \theta^m,\phi^m ) \nabla\phi^m \cdot\nabla v\mathrm{dx}
\label{wvfttm2} = 
\frac{1}{\tau}\int_\Omega B (\theta^{m-1} )v\mathrm{dx}
+\int_\Gamma  h_{m} v  \mathrm{ds} ; \\
\int_\Omega\sigma (\theta^{m-1})\nabla\phi^m\cdot\nabla w\mathrm{dx}
 = - \int_\Omega \sigma (\theta^{m-1})  \alpha_\mathrm{S}(\theta^{m-1})
\nabla\theta^{m-1} \cdot \nabla w \mathrm{dx} +\nonumber \\ + \int_{\Gamma_\mathrm{N}}  g w \mathrm{ds} ,\label{wvfphim2}
\end{eqnarray}
where $\tau$ is  the so called time step,  $B$ is defined in  (\ref{defb}), $m\in \mathbb{N}$ and $h_m$
is conveniently chosen in  Section \ref{tdt} (the time discretization technique).
We call $\phi^m$ the corresponding solution to the time independent temperature $\theta^{m-1}$.

\section{Steady-state solvability}
\label{sfpt}

In this section, we prove the existence of solutions to the
 recurrent sequence of time-discretized problems  (\ref{wvfttm})-(\ref{wvfphim})
 and  (\ref{wvfttm2})-(\ref{wvfphim2}) in Subsections \ref{fpt1} and \ref{fpt2}, respectively.
Since $m\in \mathbb{N}$ is fixed and $\theta^{m-1}\in V_{\ell}(\Omega)$ is given, 
 for the sake of simplicity, we set
 $f= B(\theta^{m-1})$ and $H=h_m$, and we omit the index to the unknown pair, \textit{i.e.} we
 simply write $(\theta,\phi)$.

Denoting by $K_{2}$ the continuity constant of the trace embedding $H^{1}(\Omega)\hookrightarrow L^2(\Gamma)$,
 with $2_*=2(n-1)/(n-2)$ if $n>2$, and any $2_*>2$ if $n=2$, and by
 $P_2$  the Poincar\'e constant correspondent to the space exponent $2$,
 the constant $K_2(P_2+1)$ obeys
\begin{equation}\label{k2p2}
\|v\|_{2,\Gamma}\leq K_2 \left( \|v\|_{2,\Omega}+ \| \nabla v\|_{2,\Omega}\right)
\leq K_2(P_2+1) \|\nabla v\|_{2,\Omega},
\quad \forall v\in H^1(\Omega).
\end{equation}

Let us introduce  \cite{alt,jaka} 
 \[
\Psi (s) := B(s)s-\int_0^s B(r)\mathrm{dr} = \int_0^s(B(s) - B(r) )\mathrm{dr}.
\]
 We state the main properties of the auxiliary operators $B$ and $\Psi$,
 the ones that we will use later. For completeness sake, 
we sketch the proof of the property (\ref{bpsi}).
\begin{lemma}\label{lbmm}
There holds
\begin{equation}\label{bpsi}
\int_\Omega (B(u)-B(v) ) u \mathrm{dx} \geq 
\int_\Omega \Psi(u)\mathrm{dx} - \int_\Omega \Psi(v)\mathrm{dx} .
\end{equation}
In particular, if  the assumption (\ref{bmm}) is fulfilled then there holds
\[
\int_\Omega \Psi(u)\mathrm{dx}\leq 
\int_\Omega B(u) u \mathrm{dx} \leq b^\# \| u\|_{2,\Omega}^2 .
\]
Under the assumption (\ref{bmm}) the operator $B$ verifies
\[
(B(u)-B(v),u-v)\geq b_\# \|u-v\|_{2,\Omega}^2 .
\]
\end{lemma}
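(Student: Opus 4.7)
The plan is to reduce each of the three assertions to a pointwise inequality on $\Omega$ and then integrate. The key structural input from (\ref{bmm}) is that, for a.e.\ $x\in\Omega$, the map $s \mapsto B(x,s) = \int_0^s b(x,z)\,\mathrm{dz}$ is nondecreasing (indeed strongly monotone with rate $b_\#$), Lipschitz with constant $b^\#$, and satisfies $B(x,0)=0$. Once this is noted, all three statements are really about the scalar primitive $B$.

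For the main estimate (\ref{bpsi}), I would first rewrite
\[
\Psi(u) - \Psi(v) = B(u)u - B(v)v - \int_v^u B(r)\,\mathrm{dr},
\]
directly from the definition $\Psi(s) = B(s)s - \int_0^s B(r)\,\mathrm{dr}$. Subtracting this from $(B(u)-B(v))u$ produces the pointwise identity
\[
(B(u) - B(v))u - \bigl(\Psi(u) - \Psi(v)\bigr) = \int_v^u B(r)\,\mathrm{dr} - B(v)(u - v),
\]
so the entire task reduces to showing $\int_v^u B(r)\,\mathrm{dr} \geq B(v)(u - v)$. Using monotonicity of $B$, the integrand dominates $B(v)$ on $[v,u]$ when $u\geq v$, while on $[u,v]$ it is dominated by $B(v)$ when $u < v$; a careful check of the sign of $u - v$ in each case confirms the inequality in both. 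Integration over $\Omega$ yields (\ref{bpsi}).

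The remaining properties are immediate consequences of the same monotonicity and the Lipschitz bound. Writing $\Psi(s) = \int_0^s(B(s) - B(r))\,\mathrm{dr}$ and splitting the cases $s \geq 0$ and $s < 0$ gives $\Psi \geq 0$ pointwise, which after integration is the left inequality in the second statement; the right inequality $B(u)u \leq b^\# u^2$ follows from $|B(u)| \leq b^\# |u|$. For the strong $L^2$-monotonicity of $B$, the pointwise identity
\[
(B(u) - B(v))(u - v) = (u - v)\int_v^u b(\cdot, z)\,\mathrm{dz} \geq b_\#(u - v)^2,
\]
obtained from the lower bound in (\ref{bmm}) (again handling $u\geq v$ and $u<v$ simultaneously by pulling out the factor $u-v$), yields the claim upon integration.

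The main (and only mild) obstacle in the whole argument is the sign bookkeeping in the pointwise step underlying (\ref{bpsi}): one has to verify that $\int_v^u B(r)\,\mathrm{dr} - B(v)(u-v) \geq 0$ regardless of the ordering of $u$ and $v$. Everything else is elementary manipulation of the primitive $B$.
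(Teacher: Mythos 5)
Your proof of the main inequality (\ref{bpsi}) is correct and is essentially the paper's own argument: both reduce, via the identity $(B(u)-B(v))u-\bigl(\Psi(u)-\Psi(v)\bigr)=\int_v^u B(r)\,\mathrm{dr}-B(v)(u-v)$, to the inequality $\int_v^u B(r)\,\mathrm{dr}\geq B(v)(u-v)$. You verify this by comparing the integrand with $B(v)$ on $[v,u]$ (with the sign check on $u-v$), while the paper writes $\int_v^u B(r)\,\mathrm{dr}=B(c)(u-v)$ by the mean value theorem and uses that $B$ is increasing; these are the same argument in substance. Your proof of the third assertion (strong $L^2$-monotonicity) is also correct. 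Note that the paper only sketches the proof of (\ref{bpsi}) and leaves the other two assertions unproved, so for those there is nothing to compare against.

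There is, however, one genuine misstep in your treatment of the second assertion. You claim that the pointwise bound $\Psi\geq 0$ is, after integration, the left inequality $\int_\Omega\Psi(u)\,\mathrm{dx}\leq\int_\Omega B(u)u\,\mathrm{dx}$. It is not: $\Psi\geq 0$ only yields $\int_\Omega\Psi(u)\,\mathrm{dx}\geq 0$, which points in the wrong direction for this purpose. What the left inequality actually requires is the pointwise statement $B(s)s-\Psi(s)=\int_0^s B(r)\,\mathrm{dr}\geq 0$. Fortunately this follows from the same kind of sign bookkeeping you already use elsewhere: since $b\geq b_\#>0$ by (\ref{bmm}), $B(r)$ has the same sign as $r$, hence $\int_0^s B(r)\,\mathrm{dr}\geq 0$ for both $s\geq 0$ and $s<0$. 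With that substitution the second assertion is proved; as written, the step is incorrect.
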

\begin{proof}
Let us write the decomposition
\[
(B(u)-B(v) ) u  = B(u)u-B(v) v  - B(v)(u-v).
\]
Thanks to the mean value theorem for definite integrals, there exists $c$ between $u$ and $v$ such that
\[
 \int_{v} ^{u} B(r)\mathrm{dr}= B(c)(u-v).\]
 Since $-B$ is a decreasing function, we obtain
 \[
\int_\Omega (B(u)-B(v) ) u \mathrm{dx} \geq 
\int_\Omega (B(u)u-B(v) v )\mathrm{dx} - 
\int_\Omega \int_{v} ^{u} B(r)\mathrm{dr} \mathrm{dx},
\]
which concludes the proof  by definition of $\Psi$.
\end{proof}

Finally, we recall the following remarkable lemma \cite[Lemma 1.9]{alt}.
\begin{lemma} \label{lbmm2}
Suppose $u_m$ weakly converge to $u$ in $L^p(0,T;W^{1,p}(\Omega))$, $p>1$, with the estimates
\[
\int_\Omega \Psi(u_m(t))  \mathrm{dx}\leq C\quad \mbox{for } 0<t<T,
\]
and for $z>0$
\begin{equation}\label{cbb}
 \int_0^{T-z}
\int_\Omega (B(u_m(t+z))-B(u_m(t)) ) (u_m(t+z)-u_m(t)) \mathrm{dx} \mathrm{dt} \leq Cz ,
\end{equation}
with $C$ being  positive constants.
Then, $B(u_m) \rightarrow B(u)$ in $L^1(Q_T)$ and
$\Psi (u_m) \rightarrow \Psi(u) $ almost everywhere in  $Q_T$.
\end{lemma}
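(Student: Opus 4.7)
The plan is to establish strong $L^1(Q_T)$-compactness of $\{B(u_m)\}$ via a Kolmogorov--Riesz--Fr\'echet argument, and then to identify the limit as $B(u)$ by exploiting the strict monotonicity of $B$ together with the weak convergence $u_m\rightharpoonup u$. Almost everywhere convergence of $\Psi(u_m)$ will then follow from continuity of $\Psi$.

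First I would extract translation estimates on $B(u_m)$. The upper bound $b\leq b^\#$ in (\ref{bmm}) makes $B$ globally Lipschitz with constant $b^\#$, so spatial shifts satisfy
\[
\|B(u_m(\cdot+h,\cdot)) - B(u_m(\cdot,\cdot))\|_{L^p(\Omega_h\times(0,T))} \leq b^\#\, |h|\,\|\nabla u_m\|_{L^p(Q_T)},
\]
which is uniformly $o(1)$ as $|h|\to 0$ by the weak-convergence bound in $L^p(0,T;W^{1,p}(\Omega))$. For temporal shifts I would use the pointwise inequality
\[
(B(a)-B(b))^2 \leq b^\#\,(B(a)-B(b))(a-b),\qquad a,b\in\mathbb{R},
\]
(a consequence of monotonicity and the Lipschitz bound) and combine it with hypothesis (\ref{cbb}) to obtain
\[
\int_0^{T-z}\int_\Omega |B(u_m(t+z))-B(u_m(t))|^2\,\mathrm{dx}\,\mathrm{dt} \leq b^\# C z,
\]
which tends to $0$ uniformly in $m$ as $z\to 0^+$.

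Boundedness of $B(u_m)$ in $L^\infty(0,T;L^2(\Omega))$ then follows from the elementary bound $\Psi(s)\geq (b_\#/2) s^2$ (a direct consequence of $b_\#\leq b$) combined with the hypothesis $\int_\Omega\Psi(u_m(t))\,\mathrm{dx}\leq C$ and the Lipschitz estimate $|B(s)|\leq b^\#|s|$. Together with the two translation estimates and the boundedness of $Q_T$, Kolmogorov--Riesz--Fr\'echet produces a subsequence along which $B(u_m)\to v$ strongly in $L^1(Q_T)$ and a.e.\ in $Q_T$. Since $b_\#>0$, the map $B:\mathbb{R}\to\mathbb{R}$ is a strictly increasing continuous bijection with continuous inverse, so $u_m = B^{-1}(B(u_m))$ converges a.e.\ in $Q_T$ to $B^{-1}(v)$. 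The weak convergence $u_m\rightharpoonup u$ in $L^p(0,T;W^{1,p}(\Omega))$ forces this a.e.\ limit to coincide with $u$, so $v=B(u)$; a standard subsequence principle upgrades the convergence to the whole sequence. Finally, continuity of $\Psi$, inherited from $B$ through $\Psi(s)=\int_0^s(B(s)-B(r))\,\mathrm{dr}$, delivers $\Psi(u_m)\to\Psi(u)$ a.e.\ in $Q_T$.

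The main obstacle I foresee is reconciling the two translation estimates, which naturally sit in different $L^q$ scales (spatial in $L^p$, temporal in $L^2$), with the uniform integrability required to run Kolmogorov--Riesz in $L^1$. The $\Psi$-bound is indispensable here: it supplies the $L^\infty_t L^2_x$-control on $u_m$ (hence on $B(u_m)$) that both tightens the integrability of the sequence and lets the $L^p$ and $L^2$ translation estimates combine into a single $L^1$-compactness statement. Once strong $L^1$-compactness is in hand, the monotonicity-based identification of the limit is short and standard.
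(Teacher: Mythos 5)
Your argument is essentially correct, but note that the paper does not actually prove this lemma: it is quoted verbatim from Alt and Luckhaus \cite[Lemma 1.9]{alt}, so your proposal is a genuinely self-contained alternative to a bare citation. What your route buys is a visible simplification available in this paper's setting: since (\ref{bmm}) gives $b_\#>0$, the primitive $B$ is bi-Lipschitz, so once Kolmogorov--Riesz yields $B(u_m)\to v$ a.e.\ you may simply invert $B$ and identify the limit through the weak convergence of $u_m$; the general Alt--Luckhaus argument must instead cope with degenerate (merely monotone) $b$ and uses a more delicate identification. Your translation estimates are sound: the pointwise inequality $(B(a)-B(b))^2\leq b^\#\,(B(a)-B(b))(a-b)$ is correct and converts (\ref{cbb}) into a uniform $L^2$ modulus in time, the bound $\Psi(s)\geq (b_\#/2)s^2$ follows from $B(s)-B(r)\geq b_\#(s-r)$, and the $L^\infty_tL^2_x$ control supplies the equi-integrability needed to run the compactness criterion in $L^1(Q_T)$. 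The one point you should patch is the spatial translation step: in this paper $b=b(x,e)$ is only a Carath\'eodory function, so $B(x,v)=\int_0^v b(x,z)\,\mathrm{dz}$ depends measurably on $x$, and the shift of the composite $x\mapsto B(x,u_m(x,t))$ produces an extra term $B(x+h,s)-B(x,s)$ that is not controlled by the Lipschitz bound in $e$ alone. It can be handled --- truncate at level $K$ and use continuity of translation in $L^1(\Omega\times[-K,K])$ for the fixed function $b$, absorbing the tail $\{|u_m|>K\}$ with the uniform $L^2$ bound --- but as written your estimate silently treats $B$ as $x$-independent. With that repair the proof is complete.
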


\subsection{Fixed point argument (solvability  to (\ref{wvfttm})-(\ref{wvfphim}))}
\label{fpt1}
\

Let $\ell\geq 2$, and define an operator $\mathcal{T}$ from $\mathbf{V}_{\ell} =
  V_\ell(\Omega)\times  V$ into itself  such that
$(\theta,\phi)=\mathcal{T}(\mathbf{u})$ is the unique solution of Proposition \ref{propu}.
\begin{proposition}\label{propu}
Let $\mathbf{u} =(u_1,u_2)\in \mathbf{V}_{\ell}$, and $u=u_1$.
Then, there exists a unique solution $(\theta,\phi)\in \mathbf{V}_{\ell}$ to the 
Neumann-power type elliptic problem 
\begin{eqnarray}
\frac{1}{\tau}\int_\Omega b(u)\theta v\mathrm{dx}+
\int_\Omega a( \mathbf{u} )\nabla \theta \cdot\nabla v\mathrm{dx}+
\int_\Omega \sigma (u) F( \mathbf{u}) \nabla\phi \cdot\nabla v\mathrm{dx} +\nonumber \\
+\int_\Gamma \gamma(u) |\theta|^{\ell-2} \theta v \mathrm{ds}   = 
\frac{1}{\tau}\int_\Omega fv \mathrm{dx} +\int_\Gamma  Hv \mathrm{ds} ; \label{wvfU} \\
\int_\Omega\sigma (u)\nabla\phi\cdot\nabla w\mathrm{dx}
 + \int_\Omega \sigma (u) \alpha_\mathrm{S}(u) \nabla \theta\cdot \nabla w \mathrm{dx}
=\int_{\Gamma_\mathrm{N}}  g w \mathrm{ds} ,\label{wvfphiU}
\end{eqnarray}
for all $v\in V_{\ell}(\Omega)$ and $w\in V$. In addition, the following estimate
\begin{eqnarray}
\frac{b_\#}{2\tau }\|\theta \|_{2,\Omega}^2 + (L_{1})_\#\|\nabla \theta \|_{2,\Omega}^2 + 
\frac{(L_{2})_\# }{2}\|\nabla\phi \|_{2,\Omega}^2 
+\frac{\gamma_\#}{\ell '} \| \theta  \|_{\ell,\Gamma} ^\ell \leq 
\frac{ 1 }{ 2\tau b_\# }\|f\|_{2,\Omega}^2+ \nonumber\\  +
\frac{1}{\ell '\gamma_\#^{1/(\ell-1)}} \| H\|_{\ell ',\Gamma}^{\ell '}+
\frac{(K_2)^2(P_2+1)^2}{2 (L_{2})_\#}\|g\|_{2,\Gamma_\mathrm{N} } ^2 := \mathcal{R}( \|f\|_{2,\Omega}^2 ,\| H\|_{\ell ',\Gamma}^{\ell '} )  \quad
\label{cotatphi}
\end{eqnarray}
holds true, if provided by one the following definition
 \begin{eqnarray}\label{L1c}
&&\left\{\begin{array}l
(L_{1})_\# = a_\#-  \varepsilon\sigma^\# \left( F^\# +  \alpha^\#\right) /2\\
(L_{2})_\# = \sigma_\# -  \sigma^\# \left( F^\# +  \alpha^\#\right) /(2\varepsilon)
\end{array}\right. \\  \label{L2c}
&&\left\{\begin{array}l
(L_{1})_\# = a_\#-  \varepsilon\sqrt{\sigma^\# }( F^\#+\alpha^\#) /2\\
(L_{2})_\# = \sigma_\#\left( 1- \sqrt{\sigma^\# }( F^\#+\alpha^\#) /(2\varepsilon) \right) 
\end{array}\right. .
 \end{eqnarray}
\end{proposition}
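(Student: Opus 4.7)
The plan is to recast the coupled system (\ref{wvfU})--(\ref{wvfphiU}) as a single nonlinear operator equation $\mathcal{A}(\theta,\phi)=L$ on the reflexive Banach space $\mathbf{V}_{\ell}=V_\ell(\Omega)\times V$, and to solve it via the Browder--Minty theorem for bounded, hemicontinuous, coercive monotone operators. Since $\mathbf{u}=(u_1,u_2)$ is given, every composite coefficient $b(u)$, $a(\mathbf{u})$, $\sigma(u)$, $F(\mathbf{u})$, $\alpha_\mathrm{S}(u)$, $\gamma(u)$ becomes a fixed Carath\'eodory function pointwise bounded by the structural constants of hypothesis (H). I would define $\mathcal{A}$ by summing the two weak left-hand sides of (\ref{wvfU})--(\ref{wvfphiU}) into a single pairing $\langle\mathcal{A}(\theta,\phi),(v,w)\rangle$, and $L$ by collecting the three source terms $\tau^{-1}\int_\Omega fv$, $\int_\Gamma Hv$ and $\int_{\Gamma_\mathrm{N}}gw$. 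Boundedness, continuity and hemicontinuity of $\mathcal{A}$ from $\mathbf{V}_\ell$ into $\mathbf{V}_\ell^*$ are immediate from the $L^\infty$--bounds on the coefficients and the continuity of $s\mapsto|s|^{\ell-2}s$ from $L^\ell(\Gamma)$ into $L^{\ell'}(\Gamma)$.

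The decisive step is the proof of strict monotonicity. Testing $\mathcal{A}(\theta_1,\phi_1)-\mathcal{A}(\theta_2,\phi_2)$ against the difference $(\delta\theta,\delta\phi)=(\theta_1-\theta_2,\phi_1-\phi_2)$ produces the quadratic integrand $a(\mathbf{u})|\nabla\delta\theta|^2+\sigma(u)|\nabla\delta\phi|^2+\sigma(u)(F(\mathbf{u})+\alpha_\mathrm{S}(u))\nabla\delta\theta\cdot\nabla\delta\phi$, together with the dissipative lower-order terms $\tau^{-1}b(u)|\delta\theta|^2$ in $\Omega$ and $\gamma(u)(|\theta_1|^{\ell-2}\theta_1-|\theta_2|^{\ell-2}\theta_2)(\theta_1-\theta_2)$ on $\Gamma$. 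The non-symmetric cross term is precisely where the two smallness conditions enter: treating $\sigma(u)(F+\alpha_\mathrm{S})$ as a single bounded coefficient and applying Young's inequality with weight $\varepsilon$ yields the residuals (\ref{L1c}) under hypothesis (\ref{afg}), while splitting $\sigma(u)=\sqrt{\sigma(u)}\cdot\sqrt{\sigma(u)}$ and absorbing one factor into the $|\nabla\delta\phi|^2$ contribution yields (\ref{L2c}) under (\ref{sfg}). In either case $(L_1)_\#,(L_2)_\#>0$, which combined with the monotonicity of $s\mapsto|s|^{\ell-2}s$ (in the form $(|a|^{\ell-2}a-|b|^{\ell-2}b)(a-b)\geq c_\ell|a-b|^\ell$ for $\ell\geq 2$) and the dissipative $b$ term, delivers strict monotonicity and hence uniqueness.

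Coercivity follows from the very same Young estimate applied to $\langle\mathcal{A}(\theta,\phi),(\theta,\phi)\rangle$, bounding it below by
\[
\tfrac{b_\#}{\tau}\|\theta\|_{2,\Omega}^2+(L_1)_\#\|\nabla\theta\|_{2,\Omega}^2+(L_2)_\#\|\nabla\phi\|_{2,\Omega}^2+\gamma_\#\|\theta\|_{\ell,\Gamma}^\ell,
\]
which controls the full $\mathbf{V}_\ell$-norm via the Poincar\'e inequality in $V$ and the trace piece defining $V_\ell(\Omega)$. Browder--Minty then produces the unique solution $(\theta,\phi)\in\mathbf{V}_\ell$. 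The quantitative estimate (\ref{cotatphi}) is finally obtained by choosing $(v,w)=(\theta,\phi)$ in the weak formulation and bounding each of the three source terms by Young's inequality, calibrated so as to absorb exactly one half of the paired coercive contribution: the $f$-term absorbs $b_\#/(2\tau)\,\|\theta\|_{2,\Omega}^2$; the $H$-term is treated by the conjugate Young $ab\leq\lambda a^{\ell'}/\ell'+b^\ell/(\ell\lambda^{\ell-1})$ with $\lambda=\gamma_\#^{-1/(\ell-1)}$, tuned so that the residual boundary coefficient is exactly $\gamma_\#/\ell'$; and the $g$-term uses the trace-Poincar\'e constant $K_2(P_2+1)$ of (\ref{k2p2}) together with Young's inequality to absorb $(L_2)_\#/2\,\|\nabla\phi\|_{2,\Omega}^2$. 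The main obstacle throughout is the non-symmetric coupling between $\theta$ and $\phi$: it forces a careful distribution of the factor $\sqrt{\sigma(u)}$ in Young's inequality, and this is precisely what splits the analysis into the two alternative smallness regimes (\ref{L1c}) and (\ref{L2c}) stated in the proposition.
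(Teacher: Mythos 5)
Your proposal is correct and follows essentially the same route as the paper: both cast the system as a single operator equation on $\mathbf{V}_\ell$ solved by Browder--Minty, both obtain the positive constants $(L_1)_\#$, $(L_2)_\#$ from the same two calibrations of Young's inequality on the non-symmetric cross terms (splitting off $\sqrt{\sigma(u)}$ for the second variant), and both derive (\ref{cotatphi}) by testing with $(\theta,\phi)$ and absorbing the source terms exactly as you describe. The only cosmetic difference is that the paper packages the principal part as a bilinear form with the $2\times 2$ matrix $\mathsf{L}(\mathbf{u})$ and invokes monotonicity of $e\mapsto\gamma(u)|e|^{\ell-2}e$ separately, whereas you fold everything into one monotone operator; the content is identical.
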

\begin{proof}
The existence of a solution to the variational system (\ref{wvfU})-(\ref{wvfphiU})
relies on the direct application of the Browder-Minty Theorem \cite{ll65}. Indeed, the form 
$\mathcal{F}: \mathbf{V}_{\ell}\rightarrow \mathbb{R}$  defined by
\[
\mathcal{F}(v,w)=\frac{1}{\tau}\int_\Omega  fv\mathrm{dx}
+\int_\Gamma  Hv \mathrm{ds} +\int_{\Gamma_\mathrm{N}}  g w \mathrm{ds} 
\]
is continuous and linear, and
the  form $\mathcal{L}: \mathbf{V}_{\ell}\times\mathbf{V}_{\ell}\rightarrow \mathbb{R}$ defined by
\begin{eqnarray*}
\mathcal{L} \left( (\theta,\phi), (v,w)\right)=\frac{1}{\tau}\int_\Omega b(u)  \theta v\mathrm{dx}+
\int_\Omega \left(\mathsf{L}( \mathbf{u} )\nabla 
\left[\begin{array}{c}
 \theta\\
 \phi
 \end{array} \right]\right)\cdot\nabla  \left[\begin{array}{c} v\\
 w
 \end{array} \right]
 \mathrm{dx},
\end{eqnarray*}
is continuous and bilinear, with $\mathsf{L}$ being the $(2\times 2)$-matrix
\[ 
\mathsf{L} ( \mathbf{u} )=\left[
\begin{array}{cc}
a( \mathbf{u} )& \sigma (u) F( \mathbf{u} ) \\
\sigma (u) \alpha_\mathrm{S} (u)
& \sigma (u)
\end{array}
\right] .
\]
Moreover, $\mathcal{L}$ is coercive:
\begin{equation}\label{Lcoer}
\sum_{i,j=1}^{ 2}\sum_{l=1}^n \left( L_{i,j} ( \mathbf{u})\xi_{j,l}\right)\xi_{l,i}\geq
(L_{1})_\#|\xi_1|^2+ (L_{2})_\#|\xi_2|^2,
\end{equation}
with $(L_{1})_\#$ and $(L_{2})_\#$   being the positive constants defined in (\ref{L1c}) or (\ref{L2c}),
  taking the assumptions (\ref{afg}) and  (\ref{sfg}) into account.
 The difference of the definitions is consequence
of the different application of the Young inequality $2AB\leq \varepsilon A^2+B^2/\varepsilon$ ($\varepsilon,A,B>0$),
see Remark \ref{young}.
Namely, with
\begin{enumerate}
\item $A=| \xi_{1}| $ and $B=| \xi_{2}|$, for (\ref{L1c}). That is,
\[ \sum_{l=1}^n \left(
\sigma (u)  F( \mathbf{u} )  \xi_{2,l}\xi_{l,1}+
\sigma (u) \alpha_\mathrm{S} (u) \xi_{1,l}\xi_{l,2} \right) \leq \sigma ^\# \left( F^\#+\alpha^\# \right)
\left( \frac{\varepsilon}{2}A^2+\frac{1}{2\varepsilon} B^2\right).
\]
\item $A=| \xi_{1}| $ and $B=\sqrt{\sigma (u)}| \xi_{2}| $,  for  (\ref{L2c}). That is, 
\[ \sum_{l=1}^n \left(
\sigma (u)  F( \mathbf{u} )  \xi_{2,l}\xi_{l,1}+
\sigma (u) \alpha_\mathrm{S} (u) \xi_{1,l}\xi_{l,2} \right)\leq \sqrt{\sigma^\# }\left( F^\#+\alpha^\# \right)
\left( \frac{\varepsilon}{2}A^2+\frac{ 1}{2\varepsilon}B^2\right).
\]
\end{enumerate}

Finally, observing that the function $e\in\mathbb{R}\mapsto \gamma(u)
|e|^{\ell-2}e $ is monotonically increasing, we conclude the existence of the required solution.
 
In order to obtain (\ref{cotatphi}), we take $v=\theta$ and $w=\phi$ as  test functions in (\ref{wvfU}) and (\ref{wvfphiU}),
respectively. Summing the obtained relations, and applying (\ref{bmm}),
(\ref{gamm}), the coercivity (\ref{Lcoer}) of $\mathsf{L}$, and the H\"older inequality, we find
\begin{eqnarray}
\frac{ b_\# }{\tau} \|\theta\|_{2,\Omega}^2+ (L_{1})_\# \|\nabla \theta\|_{2,\Omega}^2+
(L_{2})_\#\|\nabla\phi \|_{2,\Omega}^2 
+\gamma_\# \| \theta\|_{\ell,\Gamma} ^\ell \leq \nonumber \\ \leq
\frac{ 1 }{\tau} \|f\|_{2,\Omega}\|\theta\|_{2,\Omega}+
\|H\|_{\ell ',\Gamma}\|\theta\|_{\ell,\Gamma}+
\|g\|_{2,\Gamma_\mathrm{N}}\| \phi\|_{2,\Gamma_\mathrm{N}}.\label{eql1l2}
\end{eqnarray}
We successively apply (\ref{k2p2}) and the Young inequality  to obtain
\begin{eqnarray}
\| H\|_{\ell',\Gamma} \| \theta\|_{\ell,\Gamma}+
\|g\|_{2,\Gamma_\mathrm{N} }\|\phi\|_{2,\Gamma_\mathrm{N}} \leq \nonumber \\
\leq \frac{1}{\ell '\gamma_{\#}^{1/(\ell-1)} }
\| H\|_{\ell',\Gamma}^{\ell '} 
+\frac{\gamma_\#}{\ell}\| \theta\|_{\ell,\Gamma}^\ell+
 \frac{K_2^2(P_2+1)^2}{ 2(L_{2})_\#}
\|g\|_{2,\Gamma_\mathrm{N}}^2+ \frac{(L_{2})_\#}{2}\|\nabla\phi\|_{2,\Omega} ^2. \label{eqrbum}
\end{eqnarray}
Inserting (\ref{eqrbum}) into (\ref{eql1l2}), we deduce (\ref{cotatphi}).
\end{proof}

\begin{remark}\label{young}
Even $\varepsilon>0$ may be an arbitrary (but fixed) number, we may differently define $(L_1)_\#$ and $(L_2)_\#$.
Indeed, the Young inequality $2AB\leq \varepsilon A^2+B^2/\varepsilon$ ($\varepsilon,A,B>0$) may be applied to obtain
\begin{eqnarray*}
\sum_{l=1}^n \left(
\sigma (u)  F( \mathbf{u} )  \xi_{2,l}\xi_{l,1}+
\sigma (u) \alpha_\mathrm{S} (u) \xi_{1,l}\xi_{l,2} \right) \leq \sigma ^\# \left( F^\#
\left( \frac{\varepsilon_1}{2}| \xi_{1}|^2+\frac{1}{2\varepsilon_1} | \xi_{2}|^2\right)+\right. \\ \left. +
\alpha^\# 
\left( \frac{\varepsilon_2}{2}| \xi_{1}|^2+\frac{1}{2\varepsilon_2} | \xi_{2}|^2\right) \right).
\end{eqnarray*}
\end{remark}

Next, let us determine whose radius make possible that  the operator $\mathcal{T}$ maps a closed ball into itself.
\begin{proposition}\label{boundiso}
For $R=\max \{R_1,R_2\}$ with $R_1$ and $R_2$ being 
defined in (\ref{defr1}) and (\ref{defr2}), respectively,
the operator $\mathcal{T}$ verifies $\mathcal{T}(K)\subset K$, with
\begin{eqnarray*}
K=\left\lbrace (v,w)\in \mathbf{V}_\ell : \ \|\nabla w\|_{2,\Omega}+
\|\nabla v\|_{2,\Omega}+ \|v\|_{\ell,\Gamma} \leq R\right\rbrace .
\end{eqnarray*}
\end{proposition}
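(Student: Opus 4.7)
The plan is to reduce the claim to the a priori estimate already established in Proposition~\ref{propu}. Given $\mathbf{u}=(u_1,u_2)\in K$, let $(\theta,\phi)=\mathcal{T}(\mathbf{u})$ be the unique solution of \eqref{wvfU}--\eqref{wvfphiU} produced by that proposition. I would simply observe that the estimate \eqref{cotatphi} holds independently of $\mathbf{u}$, because the right-hand side $\mathcal{R}(\|f\|_{2,\Omega}^2,\|H\|_{\ell',\Gamma}^{\ell'})$ depends only on the fixed data $f=B(\theta^{m-1})$ and $H=h_m$. This uniform-in-$\mathbf{u}$ feature is exactly what makes a ball-into-ball statement possible without any further smallness condition.

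From \eqref{cotatphi} I would extract each of the three quantities controlled in the norm used to define $K$. Dropping the nonnegative $\|\theta\|_{2,\Omega}^{2}$ term and using positivity of the remaining constants yields
\begin{equation*}
\|\nabla\theta\|_{2,\Omega}\leq\sqrt{\mathcal{R}/(L_1)_\#},\qquad
\|\nabla\phi\|_{2,\Omega}\leq\sqrt{2\mathcal{R}/(L_2)_\#},\qquad
\|\theta\|_{\ell,\Gamma}\leq\bigl(\ell'\mathcal{R}/\gamma_\#\bigr)^{1/\ell}.
\end{equation*}
Summing the three bounds gives a single number depending only on the data and on the choice of the coercivity constants $(L_1)_\#,(L_2)_\#$. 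Since Proposition~\ref{propu} provides two admissible definitions \eqref{L1c} and \eqref{L2c} of those constants, corresponding respectively to the smallness conditions \eqref{afg} and \eqref{sfg}, I would set
\begin{equation*}
R_i := \sqrt{\mathcal{R}/(L_1)_\#^{(i)}}+\sqrt{2\mathcal{R}/(L_2)_\#^{(i)}}+\bigl(\ell'\mathcal{R}/\gamma_\#\bigr)^{1/\ell},\qquad i=1,2,
\end{equation*}
where the superscript $(i)$ indicates which of \eqref{L1c}--\eqref{L2c} is being used. Then \eqref{cotatphi} yields $\|\nabla\phi\|_{2,\Omega}+\|\nabla\theta\|_{2,\Omega}+\|\theta\|_{\ell,\Gamma}\leq R_i$ in the corresponding regime, and taking $R=\max\{R_1,R_2\}$ guarantees $\mathcal{T}(K)\subset K$ in both cases.

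There is no real obstacle here: once Proposition~\ref{propu} is in hand, the argument is a bookkeeping step. The only point that needs a brief justification is that the right-hand side of \eqref{cotatphi} is independent of $\mathbf{u}$ — i.e.\ the bound on the solution is fixed once the time step $\tau$ and the previous-step data $(f,H)$ are fixed — and that the constants $(L_1)_\#,(L_2)_\#$ are strictly positive thanks to the structural hypothesis \eqref{afg} or \eqref{sfg}. Together these yield a finite, explicit radius $R$, which is consistent with the paper's goal of producing quantitative estimates with explicit expressions.
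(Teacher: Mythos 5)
Your argument is correct in substance and rests on the same crux as the paper's: the right-hand side $\mathcal{R}$ of \eqref{cotatphi} is independent of $\mathbf{u}$, so the image of all of $\mathbf{V}_\ell$ lands in a fixed ball. Where you diverge is in how the bound on the composite quantity $\|\nabla\phi\|_{2,\Omega}+\|\nabla\theta\|_{2,\Omega}+\|\theta\|_{\ell,\Gamma}$ is extracted. You bound each of the three terms separately by dropping the others from the left-hand side of \eqref{cotatphi} and then sum; in particular you read off $\|\theta\|_{\ell,\Gamma}\leq(\ell'\mathcal{R}/\gamma_\#)^{1/\ell}$ directly, with no case analysis. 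The paper instead applies $(a+b)^2\leq 2(a^2+b^2)$ to the whole sum and must then relate $\|\theta\|_{\ell,\Gamma}^2$ to the quantity $\|\theta\|_{\ell,\Gamma}^\ell$ actually controlled by \eqref{cotatphi}; this forces the dichotomy $\|\theta\|_{\ell,\Gamma}\leq 1$ (yielding $R_1$ in \eqref{defr1}) versus $\|\theta\|_{\ell,\Gamma}>1$, where $\ell\geq 2$ gives $\|\theta\|_{\ell,\Gamma}^2\leq\|\theta\|_{\ell,\Gamma}^\ell$ (yielding $R_2$ in \eqref{defr2}). Your route is arguably simpler and still produces an explicit radius, which is what matters for the quantitative program of the paper. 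Two caveats. First, your radius is not the $R=\max\{R_1,R_2\}$ of \eqref{defr1}--\eqref{defr2}, so strictly speaking you prove invariance of a different ball than the one named in the statement; since the statement only pins $R$ down through its proof, this is presentational rather than mathematical, but be aware the constants do not coincide. Second, your indexing of $R_1,R_2$ by the two coercivity choices \eqref{L1c}/\eqref{L2c} misreads the structure: in the paper those two definitions correspond to the mutually exclusive hypotheses \eqref{afg} and \eqref{sfg}, so only one pair $((L_1)_\#,(L_2)_\#)$ is in force at a time and no maximum over that index is needed; the paper's indices $1,2$ instead label the two cases of the trace norm.
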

\begin{proof}
Let $\mathbf{u}\in \mathbf{V}_{\ell}$,  $u=u_1$ and $(\theta,\phi)$ be the unique solution of Proposition \ref{propu},
 \textit{i.e.}
 $(\theta,\phi)=\mathcal{T}(\mathbf{u})$. In order to prove that
$(\theta,\phi)\in K$ we consider two different
 cases: (1) if $\| \theta\|_{\ell,\Gamma} \leq1$;
 and (2) if $\| \theta\|_{\ell,\Gamma} >1$,
 \begin{enumerate}
 \item   if $\| \theta\|_{\ell,\Gamma} \leq 1$, then there holds
 \[\|\nabla \phi\|_{2,\Omega}+
\|\nabla \theta\|_{2,\Omega}+ \|\theta\|_{\ell,\Gamma} \leq
 \sqrt{2}\left(\|\nabla \phi\|_{2,\Omega}^2+ \|\nabla \theta\|_{2,\Omega}^2\right)^{1/2}+1,
\]
by applying the elementary inequality $(a+b)^2\leq 2(a^2+b^2)$ for every $a,b\geq 0$.
 By using (\ref{cotatphi}), we may take
\begin{equation}
R_1= 
\left(\frac{2\mathcal{R}}{\min\left\lbrace  (L_{1})_\#, (L_{2})_\#/2\right\rbrace} \right)^{1/2}+1 . \label{defr1}
\end{equation}
 \item   if $\| \theta\|_{\ell,\Gamma} > 1$, then using $\ell\geq 2$ there holds
 \[\|\nabla \phi\|_{2,\Omega}+
\|\nabla \theta\|_{2,\Omega}+ \|\theta\|_{\ell,\Gamma} \leq
 \sqrt{2}\left( 2(\|\nabla \phi\|_{2,\Omega}^2+
\|\nabla \theta\|_{2,\Omega}^2) + \|\theta\|_{\ell,\Gamma} ^\ell\right)^{1/2},
\]
by applying the elementary inequality $(a+b)^2\leq 2(a^2+b^2)$ for every $a,b\geq 0$.
 By using (\ref{cotatphi}), we may take
\begin{equation}
R_2^2= 
\left(\frac{2 }{\min\left\lbrace  (L_{1})_\#, (L_{2})_\#/2\right\rbrace} +
\frac{\ell '}{\gamma_\#}\right)\mathcal{R} . \label{defr2}
\end{equation}
 \end{enumerate}
 
Then, the proof is complete by taking $R$ such that is the maximum of $R_1$ and $R_2$
defined in (\ref{defr1}) and (\ref{defr2}), respectively. 
\end{proof}

\begin{proposition}\label{contnt}
The operator $\mathcal{T}$ is continuous.
\end{proposition}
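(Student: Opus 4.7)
The plan is to show that if $\mathbf{u}_n \to \mathbf{u}$ in $\mathbf{V}_\ell$, then the unique solutions $(\theta_n,\phi_n) := \mathcal{T}(\mathbf{u}_n)$ produced by Proposition \ref{propu} converge in $\mathbf{V}_\ell$ to $(\theta,\phi) := \mathcal{T}(\mathbf{u})$. I would carry this out in two phases: first identify the limit via weak convergence and pass-to-the-limit in the variational formulation, and then upgrade to strong convergence by subtracting the two systems and exploiting the coercivity (\ref{Lcoer}) together with the strong monotonicity in (H6).

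For the first phase, Proposition \ref{boundiso} shows that $\{(\theta_n,\phi_n)\}$ is bounded in $\mathbf{V}_\ell$ (the estimate (\ref{cotatphi}) only depends on the fixed data $f,H,g$). Extracting a subsequence, we obtain $\theta_n \rightharpoonup \tilde\theta$ in $H^1(\Omega)$ and $\phi_n \rightharpoonup \tilde\phi$ in $V$, together with Rellich strong convergence in $L^2(\Omega)$, compact trace convergence in $L^2(\Gamma)$, and pointwise a.e.\ convergence. By hypothesis, $u_1^n \to u_1$ a.e.\ as well. Since $a,b,\sigma,F,\alpha_\mathrm{S},\gamma$ are Carath\'eodory and uniformly bounded by hypothesis (H) and (H6), Lebesgue's theorem yields strong convergence of each composed coefficient in every $L^q(\Omega)$, $q<\infty$. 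This allows passage to the limit in the linear integrands (strongly convergent bounded coefficient times weakly convergent gradient against a fixed test gradient); for the boundary power term, boundedness of $|\theta_n|^{\ell-2}\theta_n$ in $L^{\ell'}(\Gamma)$ combined with a.e.\ convergence on $\Gamma$ gives weak convergence in $L^{\ell'}(\Gamma)$ (Lions' lemma), which suffices when multiplied by a strongly converging bounded $\gamma(u_1^n)$ and tested against $v\in L^\ell(\Gamma)$. Hence $(\tilde\theta,\tilde\phi)$ solves (\ref{wvfU})--(\ref{wvfphiU}) for $\mathbf{u}$, so uniqueness in Proposition \ref{propu} gives $(\tilde\theta,\tilde\phi)=(\theta,\phi)$ and the whole original sequence converges weakly.

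For the second phase, I would subtract the two systems and use $v=\theta_n-\theta$, $w=\phi_n-\phi$ as test functions. Grouping the terms that involve the same coefficients, the left-hand side produces
\[
\tfrac{b_\#}{\tau}\|\theta_n-\theta\|_{2,\Omega}^2 + (L_1)_\#\|\nabla(\theta_n-\theta)\|_{2,\Omega}^2 + (L_2)_\#\|\nabla(\phi_n-\phi)\|_{2,\Omega}^2 + \gamma_\# \|\theta_n-\theta\|_{\ell,\Gamma}^\ell,
\]
thanks to (\ref{bmm}), the coercivity (\ref{Lcoer}) of $\mathsf{L}(\mathbf{u}_n)$, and the strong monotonicity of the boundary operator from (H6); the cross term $\sigma(u_1^n)\alpha_\mathrm{S}(u_1^n)\nabla\theta_n\cdot\nabla(\phi_n-\phi)$ is precisely absorbed into the matrix-form coercivity rather than estimated separately. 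The right-hand side consists of error integrals of the shape $\int(a(\mathbf{u}_n)-a(\mathbf{u}))\nabla\theta\cdot\nabla(\theta_n-\theta)$, $\int(\sigma(u_1^n)F(\mathbf{u}_n)-\sigma(u_1)F(\mathbf{u}))\nabla\phi\cdot\nabla(\theta_n-\theta)$, the analogous two pieces for the $\phi$-equation, a $b$-term of the form $\tau^{-1}\int(b(u_1^n)-b(u_1))\theta(\theta_n-\theta)$, and the boundary piece $\int_\Gamma(\gamma(u_1^n)-\gamma(u_1))|\theta|^{\ell-2}\theta(\theta_n-\theta)$. Each is a H\"older product of a coefficient difference that vanishes strongly in every $L^q$, a fixed $L^2$ or $L^\ell$ function, and a $\mathbf{V}_\ell$-bounded factor, so all tend to $0$. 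Hence each nonnegative term on the left vanishes, yielding strong convergence in $\mathbf{V}_\ell$.

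The hard part will be the bookkeeping in the subtraction step: the cross couplings between the $\theta$- and $\phi$-equations must be regrouped so that the matrix coercivity (\ref{Lcoer}) applies to $\nabla(\theta_n-\theta,\phi_n-\phi)$ rather than to single gradients, and the boundary integral must be split to isolate the monotone contribution $(\gamma(u_1^n)|\theta_n|^{\ell-2}\theta_n-\gamma(u_1^n)|\theta|^{\ell-2}\theta)(\theta_n-\theta)$ (to which (H6) applies directly) from the coefficient-difference remainder. Once these algebraic manipulations are done correctly, the vanishing of the error terms is routine by Lebesgue dominated convergence.
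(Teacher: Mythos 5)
Your argument is correct in substance but follows a genuinely different route from the paper's. The paper assumes only \emph{weak} convergence $\mathbf{u}^m\rightharpoonup\mathbf{u}$ in $\mathbf{V}_\ell$, extracts a weakly convergent subsequence of $(\theta_m,\phi_m)$, and then devotes most of the proof to a Minty monotonicity argument to identify the weak $L^{\ell'}(\Gamma)$-limit $\Lambda$ of the boundary nonlinearity $\gamma(u^m)|\theta_m|^{\ell-2}\theta_m$ with $|\theta|^{\ell-2}\theta$; its conclusion is only weak convergence of $\mathcal{T}(\mathbf{u}^m)$ to $\mathcal{T}(\mathbf{u})$. You bypass Minty twice: in your first phase via a.e.\ convergence of the traces plus the bounded-in-$L^{\ell'}$-and-a.e.-convergent lemma, and, more decisively, in your second phase by subtracting the two systems, which yields \emph{strong} convergence of $(\theta_n,\phi_n)$ --- a stronger conclusion by more elementary means. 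Indeed your first phase is redundant once the second works, since the subtraction compares $\mathcal{T}(\mathbf{u}_n)$ directly with $\mathcal{T}(\mathbf{u})$ using only the uniform bound of Proposition \ref{boundiso} and dominated convergence of the frozen coefficients. Three points need care. First, you posit $\mathbf{u}_n\to\mathbf{u}$ strongly while the paper works with weakly convergent inputs; since the set $K$ of Proposition \ref{boundiso} is not norm-compact, norm continuity alone does not feed a Schauder-type fixed point, whereas your subtraction argument goes through verbatim under the weaker hypothesis $\mathbf{u}_n\rightharpoonup\mathbf{u}$ (only Rellich/compact-trace a.e.\ convergence and uniform bounds are used) and then delivers complete continuity of $\mathcal{T}$ --- you should state it that way. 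Second, the a.e.\ convergences hold only along subsequences, so concluding for the whole sequence requires the usual subsequence-of-subsequences remark. Third, the monotone boundary contribution $\gamma(u_1^n)\left(|\theta_n|^{\ell-2}\theta_n-|\theta|^{\ell-2}\theta\right)(\theta_n-\theta)$ is bounded below by $\gamma_\#2^{2-\ell}|\theta_n-\theta|^\ell$ using the lower bound on $\gamma$ and the elementary monotonicity of $e\mapsto|e|^{\ell-2}e$, not by the strong monotonicity clause of (H6) as literally written, since there $\gamma$ is evaluated at the same arguments as the power while here it is frozen at $u_1^n$. These are cosmetic repairs; the decomposition, the coercive grouping via the matrix form, and the limit passages are sound.
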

\begin{proof}
Let $\lbrace \mathbf{u}^m\rbrace_{m\in\mathbb{N}}$ be a sequence such that
weakly converges to $\mathbf{u}=(u,u_2)$ in $ \mathbf{V}_\ell$,  and
$(\theta_m,\phi_m)=\mathcal{T}(\mathbf{u}^m)$ for each $m\in\mathbb{N}$.
Proposition \ref{propu} guarantees that $(\theta_m,\phi_m)$ solves, for each $m\in\mathbb{N}$,
 the variational system (\ref{wvfU})$_m$-(\ref{wvfphiU})$_m$, 
with  $\mathbf{u}$ replaced by  $\mathbf{u}^m$.
The uniform boundedness ensured by Proposition \ref{boundiso} guarantees the existence of a limit
$(\theta,\phi)\in \mathbf{V}_\ell$, for at least a subsequence of $(\theta_m,\phi_m)$
still denoted by $(\theta_m,\phi_m)$, such that
\[ 
\theta_m\rightharpoonup \theta \quad\mbox{in } V_\ell(\Omega)\quad \mbox{and}\quad
\phi_m\rightharpoonup \phi \quad\mbox{in } V \quad
(\mbox{as }   m\rightarrow +\infty) .
\] 

The Rellich-Kondrachov theorem guarantees the strong convergences
\begin{eqnarray*}
u^m\rightarrow u  &\mbox{ and }& u_2^m\rightarrow u_2 \quad\mbox{in } L^2(\Omega); \\
\theta_m\rightarrow \theta  &\mbox{ and }& \phi_m\rightarrow \phi \quad\mbox{in } L^2(\Omega); \\
u^m\rightarrow u  &\mbox{ and }& \theta_m\rightarrow \theta \quad\mbox{in } L^2(\Gamma).
\end{eqnarray*}
To show that  $(\theta,\phi)=\mathcal{T}(\mathbf{u})$, it remains to pass to the limit in the system
(\ref{wvfU})$_m$-(\ref{wvfphiU})$_m$ as  $m$ tends to infinity.

 Applying the Krasnoselski theorem to the Nemytskii operators $b$, $a$, $\sigma$, we have 
 \begin{eqnarray*}
 b(u^m) v\rightarrow b(u) v &\mbox{ in }& L^2(\Omega);\\
 a(\mathbf{u}^m) \nabla v\rightarrow a(\mathbf{u}) \nabla  v &\mbox{ in }&  \mathbf{L}^2(\Omega); \\
\sigma (u^m)\nabla v\rightarrow \sigma( u)\nabla v &\mbox{ in }& \mathbf{L}^2(\Omega),
 \end{eqnarray*}
 for all $v\in H^1(\Omega)$, making use of the Lebesgue dominated convergence theorem and
 the assumptions (\ref{smm}) and (\ref{amm})-(\ref{bmm}). Also the terms
 $\sigma (u^m)F(\mathbf{u}^m) \nabla v$ and $ \sigma (u^m) \alpha_\mathrm{S} (u^m) \nabla v$
  pass to the limit making recourse to the assumptions
 (\ref{isof}) and (\ref{amax}), respectively.
 
 Similarly, the boundary term $\gamma (u^m)v$ converges  to  $\gamma (u) v$ in $L^{\ell '}(\Gamma)$, for all
 $v\in L^{\ell '}(\Gamma)$, due to (\ref{gamm}). 
 Observe that $ \theta_m$ strongly converges to $\theta$ in $L^p(\Gamma)$, for all $1<p<\ell$.
 Then, the nonlinear boundary term $\gamma(u^m) |\theta^m|^{\ell-2} \theta^m $ weakly
 passes to the limit as $m$ tends to infinity to $\gamma (u)\Lambda$  in $L^{\ell '}(\Gamma)$.
Therefore, the variational system (\ref{wvfU})$_m$-(\ref{wvfphiU})$_m$
as $m$ tends to infinity to conclude that $\phi$ is the required limit solution, \textit{i.e.}
it solves the limit equality (\ref{wvfphiU}), while $\theta$ verifies
\begin{eqnarray}
\frac{1}{\tau}\int_\Omega b(u)\theta v\mathrm{dx}+
\int_\Omega a( \mathbf{u} )\nabla \theta \cdot\nabla v\mathrm{dx}+
\int_\Omega \sigma (u) F( \mathbf{u}) \nabla\phi \cdot\nabla v\mathrm{dx} +\nonumber \\
+\int_\Gamma \gamma(u) \Lambda v \mathrm{ds}   = 
\frac{1}{\tau}\int_\Omega fv \mathrm{dx} +\int_\Gamma  Hv \mathrm{ds}.\label{lambda}
\end{eqnarray}
It  remains to identify the limit $\Lambda$
 by using  the Minty trick  as follows.
The argument is slightly different from the classical one (see  \cite{ll65}).

Making recourse to the the lower bound (\ref{gamm})  of $\gamma$ and the monotone property of the function
$v\mapsto |v|^{\ell-2}v$, we have
\[
0\leq \gamma _\# 2^{2-\ell }|\theta_m -v|^\ell 
\leq \gamma(u^m) \left(|\theta_{m}|^{\ell-2} \theta_{m} -
|v|^{\ell-2}v\right) (\theta_{m} -v).
\]
 
 Thanks to the coercivity coefficients  (\ref{L1c}) or  (\ref{L2c}), the monotonicity property of the boundary term,
  and the H\"older and  Young inequalities,
let us consider
 \begin{eqnarray}
\int_{\Omega}a(\mathbf{u}^m)|\nabla (\theta_m- v)|^2\mathrm{dx} +
\int_{\Omega}\sigma(u^m)|\nabla (\phi_m-\phi)|^2\mathrm{dx}+ \nonumber \\ +
\int_{\Omega} \sigma (u^m) F (\mathbf{u}^m)\nabla (\phi_m -\phi) \cdot\nabla( \theta_m-v)\mathrm{dx}  + \nonumber \\ +
\int_{\Omega} \sigma (u^m)  \alpha_\mathrm{S} (u^m)\nabla (\theta_m-v) \cdot\nabla(\phi_m -\phi )\mathrm{dx} + \nonumber \\
+\int_{\Gamma} \gamma(u^m) \left(|\theta_{m}|^{\ell-2} \theta_{m} -|v|^{\ell-2}v\right) (\theta_{m} -v)\mathrm{ds}\geq \nonumber \\ \geq
(L_1)_\# \int_{\Omega}|\nabla (\theta_m-v) |^2\mathrm{dx} +
(L_2)_\#\int_{\Omega}|\nabla (\phi_m -\phi) |^2\mathrm{dx}\geq 0. \label{jmm}
\end{eqnarray}

Let us define 
 \begin{eqnarray*}
\mathcal{J}_m:= \int_{\Omega}
\left( a(\mathbf{u}^m) |\nabla \theta_m|^2 +\sigma (u^m) F (\mathbf{u}^m) \nabla \phi_m\cdot \nabla \theta_m
\right)\mathrm{dx} + \\ +\int_{\Omega}
\left(\sigma (u^m) |\nabla \phi_m|^2 + \sigma (u^m)  \alpha_\mathrm{S} (u^m) \nabla \theta_m\cdot \nabla \phi_m
\right)\mathrm{dx} + \\ 
+\int_{\Gamma}\gamma(u^m) |\theta_m|^\ell\mathrm{ds}.
\end{eqnarray*}

On the one hand, we deduce
 \begin{eqnarray*}
\lim_{m\rightarrow \infty}\mathcal{J}_m\geq
\int_{\Gamma}\gamma(u)\Lambda v\mathrm{ds} +
\int_{\Gamma}\gamma(u)|v|^{\ell-2} v(\theta -v)\mathrm{ds}+ \\ +
\int_{\Omega}a(\mathbf{u})\nabla \theta\cdot \nabla v\mathrm{dx} +
\int_{\Omega}a(\mathbf{u})\nabla v\cdot\nabla (\theta- v)\mathrm{dx} + \\ +
\int_{\Omega}\sigma(u)|\nabla \phi|^2\mathrm{dx} +
\int_{\Omega} \sigma (u) F (\mathbf{u})\nabla \phi \cdot\nabla\theta\mathrm{dx}  + 
\int_{\Omega} \sigma (u)  \alpha_\mathrm{S} (u)\nabla \theta \cdot\nabla\phi\mathrm{dx}.
\end{eqnarray*}

On the other hand,
 taking in (\ref{wvfU})$_m$ the test function $v=\theta^m$, in (\ref{lambda}) the test function $v=\theta$,
 in  (\ref{wvfphiU})$_m$ the test function $w=\phi_m$, and in (\ref{wvfphiU})
the test function $w=\phi$, we deduce
 \begin{eqnarray*}
\lim_{m\rightarrow \infty}\mathcal{J}_m =
\frac{1}{\tau}\int_\Omega f\theta \mathrm{dx} +\int_\Gamma  H\theta \mathrm{ds} -
\frac{1}{\tau}\int_\Omega b(u)\theta \mathrm{dx} + \int_{\Gamma_\mathrm{N}}  g \theta \mathrm{ds}
=\\
=\int_\Omega a( \mathbf{u} ) |\nabla \theta |^2\mathrm{dx}+
\int_\Omega \sigma (u) F( \mathbf{u}) \nabla\phi \cdot\nabla \theta\mathrm{dx} 
+\int_\Gamma \gamma(u) \Lambda \theta \mathrm{ds} + \\ +\int_\Omega \sigma(u)|\nabla\phi |^2\mathrm{dx}
+ \int_\Omega  \sigma(u )  \alpha_\mathrm{S}  (u)\nabla\theta \cdot \nabla \phi\mathrm{dx} .
\end{eqnarray*}

Gathering the above two relations, we find
 \[ 
\int_{\Omega}a(\mathbf{u})|\nabla (\theta- v)|^2\mathrm{dx} + 
\int_{\Gamma}\gamma(u)(\Lambda -|v|^{\ell-2}v) (\theta-v) \mathrm{ds} \geq 0.
\] 
We continue the argument by taking $v=\theta-\delta \varphi$, with $\varphi\in\mathcal{D}(\Gamma)$.
 After dividing by $\delta>0$,
and finally letting $\delta\rightarrow 0^+$ we arrive to
 \begin{eqnarray*}
\int_{\Gamma}\gamma(u)(\Lambda - |\theta|^{\ell-2}\theta )\varphi\mathrm{ds} \geq 0, \quad\forall
\varphi \in\mathcal{D}(\Gamma),
\end{eqnarray*}
which implies that $\Lambda = |\theta|^{\ell-2}\theta$.

 Thus, we are in the condition of  concluding that $(\theta,\phi)$ is the required limit solution, \textit{i.e.}
it solves the limit system (\ref{wvfU})-(\ref{wvfphiU}).
\end{proof}

Thanks to Propositions \ref{propu}, \ref{boundiso} and \ref{contnt}, there exists at least one fixed point of 
$\mathcal{T}$, that is $(\theta,\phi)=\mathcal{T}(\theta,\phi)$, which concludes the
solvability  to (\ref{wvfttm})-(\ref{wvfphim}).

\subsection{Fixed point argument (solvability  to (\ref{wvfttm2})-(\ref{wvfphim2}))}
\label{fpt2}
\ 

Let $\ell\geq 2$, and define an operator $\mathcal{T}$ from $V_{\ell}(\Omega)$ into itself  such that
$\theta=\mathcal{T}(u)$ is the unique solution of Proposition \ref{propt}.

Denote by the well defined continuous operator such that $\mathcal F( u)=\phi$. 
The existence of a unique weak auxiliary solution $\phi$ to the variational equality (\ref{wvfphiU}) is
standard and it can be stated as follows.
\begin{proposition}\label{propphi}
Let $u\in H^1(\Omega)$.
Under the assumptions   (\ref{smm}), (\ref{amax}) and (H5),
the Neumann problem
\begin{equation}\label{wvfphi}
\int_\Omega\sigma(u) \nabla\phi\cdot\nabla w\mathrm{dx}=
\int_\Omega  \sigma(u) \alpha_\mathrm{S} (u)\nabla u\cdot \nabla w \mathrm{dx}+ 
 \int_{\Gamma_\mathrm{N}}  gw \mathrm{ds},
\qquad\forall w\in V,
\end{equation}
admits a unique solution $\phi\in V$.
Moreover, the estimate
\begin{equation}\label{cotaphi}
\|\sqrt{\sigma(u)} \nabla\phi\|_{2,\Omega}\leq \sqrt{\sigma^\#}\alpha^\# \|\nabla u\|_{2,\Omega} 
+\frac{K_2 (P_2+1) }{\sqrt{\sigma_\#}}\|g\|_{2,\Gamma_\mathrm{N}} 
\end{equation}
holds true.
\end{proposition}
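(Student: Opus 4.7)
My plan is a straightforward application of the Lax--Milgram theorem followed by an energy estimate obtained by testing with $w=\phi$.

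First, I will verify that the bilinear form $a(\phi,w)=\int_\Omega \sigma(u)\nabla\phi\cdot\nabla w\,\mathrm{dx}$ is continuous and coercive on $V$. Continuity follows at once from (\ref{smm}) and Cauchy--Schwarz, with constant $\sigma^\#$. For coercivity, I will use that $u$ is a fixed function (so $\sigma(u)$ is a fixed measurable coefficient bounded below by $\sigma_\#>0$) together with the fact that $\|\nabla\cdot\|_{2,\Omega}$ is a norm on $V$ equivalent to the full $H^1$-norm (via the Poincar\'e inequality applied to functions with zero mean over $\Omega$ or $\partial\Omega$). The linear form
\[
L(w)=\int_\Omega \sigma(u)\alpha_\mathrm{S}(u)\nabla u\cdot\nabla w\,\mathrm{dx}+\int_{\Gamma_\mathrm{N}}gw\,\mathrm{ds}
\]
is continuous on $V$ because of (\ref{smm}), (\ref{amax}), the fact that $u\in H^1(\Omega)$, assumption (H5), and the trace inequality (\ref{k2p2}). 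Lax--Milgram then produces a unique $\phi\in V$ solving (\ref{wvfphi}).

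To derive (\ref{cotaphi}), I will test with $w=\phi$, which is admissible since $\phi\in V$, to get
\[
\|\sqrt{\sigma(u)}\nabla\phi\|_{2,\Omega}^{2}
=\int_\Omega \sqrt{\sigma(u)}\,\alpha_\mathrm{S}(u)\nabla u\cdot\sqrt{\sigma(u)}\nabla\phi\,\mathrm{dx}
+\int_{\Gamma_\mathrm{N}} g\phi\,\mathrm{ds}.
\]
I then apply Cauchy--Schwarz to the volume term and bound $|\sqrt{\sigma(u)}\alpha_\mathrm{S}(u)|\le \sqrt{\sigma^\#}\alpha^\#$ using (\ref{smm}) and (\ref{amax}). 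For the boundary term I use H\"older on $\Gamma_\mathrm{N}$, the trace estimate (\ref{k2p2}), and the lower bound $\sqrt{\sigma_\#}\|\nabla\phi\|_{2,\Omega}\le\|\sqrt{\sigma(u)}\nabla\phi\|_{2,\Omega}$, yielding
\[
\|\sqrt{\sigma(u)}\nabla\phi\|_{2,\Omega}^{2}\le \sqrt{\sigma^\#}\,\alpha^\#\,\|\nabla u\|_{2,\Omega}\,\|\sqrt{\sigma(u)}\nabla\phi\|_{2,\Omega}+\frac{K_2(P_2+1)}{\sqrt{\sigma_\#}}\|g\|_{2,\Gamma_\mathrm{N}}\|\sqrt{\sigma(u)}\nabla\phi\|_{2,\Omega}.
\]
Dividing by $\|\sqrt{\sigma(u)}\nabla\phi\|_{2,\Omega}$ (the case where it vanishes being trivial) produces exactly the claimed estimate (\ref{cotaphi}).

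There is no real obstacle: the argument is entirely classical since the equation for $\phi$ is \emph{linear} in $\phi$ once $u$ is frozen, and all nonlinearities on the right-hand side are controlled by the pointwise bounds $\sigma^\#$ and $\alpha^\#$. The only mild care needed is to make sure that the $\sqrt{\sigma(u)}$-weighted norm is used consistently in passing from Cauchy--Schwarz on the volume integrand to the final division, which is precisely the reason the estimate is stated in the weighted form rather than in $\|\nabla\phi\|_{2,\Omega}$ alone.
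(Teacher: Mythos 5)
Your proposal is correct and follows essentially the same route as the paper: the paper likewise treats existence and uniqueness as standard (Lax--Milgram for the frozen-coefficient linear problem on $V$) and obtains (\ref{cotaphi}) by testing with $w=\phi$, applying the H\"older inequality together with the trace bound (\ref{k2p2}) and the lower bound $\sigma_\#$, and dividing by $\|\sqrt{\sigma(u)}\nabla\phi\|_{2,\Omega}$. No discrepancies to report.
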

\begin{proof} Let us establish the quantitative estimate (\ref{cotaphi}).
We take $w=\phi$ as a test function
 in (\ref{wvfphi}), and we compute by applying the H\"older inequality and (\ref{k2p2})
\begin{eqnarray*} 
\|\sqrt{\sigma(u)}\nabla\phi\|_{2,\Omega} ^2 \leq \left( \alpha^\#
\| \sqrt{\sigma(u)}\nabla u\|_{2,\Omega}  
+\frac{ K_2(P_2+1)}{\sqrt{\sigma_\#}} \|g\|_{2,\Gamma_\mathrm{N}}\right)
\| \sqrt{\sigma(u)}\nabla\phi\|_{2,\Omega} .
\end{eqnarray*}
Then, (\ref{cotaphi}) arises.
\end{proof}

\begin{proposition}\label{propt}
Let $\mathbf{u} =(u,\phi) \in \left( H^1(\Omega) \right)^2$. 
Under the assumptions   (\ref{smm}), (\ref{gamm}) and (\ref{isof})-(\ref{bmm}),
there exists a unique solution $\theta\in   V_\ell(\Omega)$ to the  power type elliptic problem 
\begin{eqnarray}
\frac{1}{\tau}\int_\Omega b(u)\theta v\mathrm{dx}+
\int_\Omega a (\mathbf{u} )\nabla \theta \cdot\nabla v\mathrm{dx}+
\int_\Omega \sigma(u) F ( \mathbf{u} ) \nabla\phi \cdot\nabla v\mathrm{dx} +\nonumber \\
+\int_\Gamma \gamma(u) |\theta |^{\ell-2}\theta v \mathrm{ds} \label{wvftt} = 
\frac{1}{\tau}\int_\Omega fv \mathrm{dx} +\int_\Gamma  Hv \mathrm{ds} ,
\end{eqnarray}
for all $v\in V_{\ell}(\Omega)$. If  $\phi\in V$ satisfies (\ref{cotaphi}), then the following estimate
\begin{eqnarray}
\frac{b_\#}{2\tau }\| \theta \|_{2,\Omega}^2 +
\frac{ a_\#}{2} \|\nabla \theta \|_{2,\Omega}^2
+\frac{\gamma_\#}{\ell '} \| \theta \|_{\ell,\Gamma} ^\ell \leq 
\frac{ 1 }{ 2\tau b_\# }\|f\|_{2,\Omega}^2 +
\frac{1}{\ell '\gamma_\#^{1/(\ell-1)}}
\| H\|_{\ell ',\Gamma}^{\ell '}+ \nonumber \\  +\frac{(F^\#)^2\sigma^\#}{ a_\#}
\left( \sigma^\#(\alpha^\# )^2\|\nabla u \|_{2,\Omega}^2 +
\frac{K_2 ^2(P_2+1)^2 }{\sigma_\#}\|g\|_{2,\Gamma_\mathrm{N}}^2\right) . \label{cotatt}
\end{eqnarray}
holds true.
\end{proposition}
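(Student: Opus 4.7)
The plan is to mirror the structure of Proposition \ref{propu} but on the scalar space $V_\ell(\Omega)$, since here $\phi$ is prescribed rather than coupled. First I would rewrite the problem as $\mathcal{A}(\theta) = \mathcal{F}$ in $V_\ell(\Omega)^*$, where
\[
\langle \mathcal{A}(\theta),v\rangle := \frac{1}{\tau}\int_\Omega b(u)\theta v\,\mathrm{dx}+\int_\Omega a(\mathbf{u})\nabla\theta\cdot\nabla v\,\mathrm{dx}+\int_\Gamma \gamma(u)|\theta|^{\ell-2}\theta\,v\,\mathrm{ds},
\]
and
\[
\mathcal{F}(v) := \frac{1}{\tau}\int_\Omega f v\,\mathrm{dx}+\int_\Gamma H v\,\mathrm{ds}-\int_\Omega \sigma(u)F(\mathbf{u})\nabla\phi\cdot\nabla v\,\mathrm{dx}.
\]
Using (\ref{isof}), (\ref{smm}) and hypotheses (H4)-(H5), $\mathcal{F}$ is a bounded linear functional on $V_\ell(\Omega)$.

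Next I would apply the Browder-Minty theorem. Hemicontinuity of $\mathcal{A}$ is immediate from the Carathéodory hypothesis on $b$, $a$, $\gamma$ and the continuity of $z\mapsto|z|^{\ell-2}z$. Strict monotonicity follows termwise: the $b$-term gives $b_\#\|\theta-\tilde\theta\|_{2,\Omega}^2$ via (\ref{bmm}), the $a$-term gives $a_\#\|\nabla(\theta-\tilde\theta)\|_{2,\Omega}^2$ via (\ref{amm}), and the boundary term inherits $\gamma_\# 2^{2-\ell}\|\theta-\tilde\theta\|_{\ell,\Gamma}^\ell$ from the elementary monotonicity of $z\mapsto|z|^{\ell-2}z$ combined with (\ref{gamm}) (exactly as invoked in the Minty trick of Proposition \ref{contnt}). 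Coercivity in the norm of $V_\ell(\Omega)$ follows from the same three lower bounds plus $\ell\geq 2$. Existence and uniqueness of $\theta\in V_\ell(\Omega)$ follow.

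For the estimate (\ref{cotatt}) I would test (\ref{wvftt}) with $v=\theta$, obtaining on the left the three positive contributions $\frac{b_\#}{\tau}\|\theta\|_{2,\Omega}^2+a_\#\|\nabla\theta\|_{2,\Omega}^2+\gamma_\#\|\theta\|_{\ell,\Gamma}^\ell$. I would then split the right-hand side using Hölder and Young's inequality $AB\leq \varepsilon A^p/p+\varepsilon^{-q/p}B^q/q$ with conjugate exponents adapted to each term: the $f$-term absorbs $\frac{b_\#}{2\tau}\|\theta\|_{2,\Omega}^2$; the $H$-term, after passing through $\|H\|_{\ell',\Gamma}\|\theta\|_{\ell,\Gamma}$, absorbs $\frac{\gamma_\#}{\ell}\|\theta\|_{\ell,\Gamma}^\ell$ and leaves the factor $(\ell'\gamma_\#^{1/(\ell-1)})^{-1}$; the cross term is bounded by $F^\#\sqrt{\sigma^\#}\|\sqrt{\sigma(u)}\nabla\phi\|_{2,\Omega}\|\nabla\theta\|_{2,\Omega}$ through (\ref{isof}) and (\ref{smm}), and a further Young split absorbs $\frac{a_\#}{2}\|\nabla\theta\|_{2,\Omega}^2$, leaving $\frac{(F^\#)^2\sigma^\#}{2a_\#}\|\sqrt{\sigma(u)}\nabla\phi\|_{2,\Omega}^2$. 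Finally I would substitute the bound (\ref{cotaphi}) from Proposition \ref{propphi}, squared and combined with $(A+B)^2\leq 2(A^2+B^2)$, to reach the explicit right-hand side of (\ref{cotatt}).

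I do not expect a serious obstacle: the equation is linear in $\theta$ modulo the monotone boundary nonlinearity, so Browder-Minty applies cleanly, and the estimate is entirely mechanical once the cross term is tamed by (\ref{cotaphi}). The only subtle point is that one works in $V_\ell(\Omega)$ rather than $H^1(\Omega)$ because $\ell$ may exceed the critical trace exponent $2_*$; this is already accounted for in the definition of $V_\ell(\Omega)$ and makes the test function $v=\theta$ admissible.
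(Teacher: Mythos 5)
Your proposal is correct and follows essentially the same route as the paper: the a priori bound is obtained exactly as in the paper's proof (test with $v=\theta$, H\"older, Young splits absorbing $\tfrac{b_\#}{2\tau}\|\theta\|_{2,\Omega}^2$, $\tfrac{\gamma_\#}{\ell}\|\theta\|_{\ell,\Gamma}^\ell$ and $\tfrac{a_\#}{2}\|\nabla\theta\|_{2,\Omega}^2$, then inserting (\ref{cotaphi})), and your explicit Browder--Minty argument for existence and uniqueness is just the scalar version of the one the paper uses in Proposition \ref{propu} and leaves implicit here. Your tracking of the $(F^\#)^2$ factor and the doubling constant from $(A+B)^2\le 2(A^2+B^2)$ is, if anything, slightly more careful than the paper's intermediate display.
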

\begin{proof}
Taking $v=\theta$ as a test function in (\ref{wvftt}), and applying (\ref{bmm}), (\ref{amm}), 
(\ref{gamm}), (\ref{isof}), and the H\"older inequality, we find
\begin{eqnarray*}
\frac{b_\#}{\tau}\|\theta\|_{2,\Omega}^2+ a_\# \|\nabla \theta\|_{2,\Omega}^2 
+\gamma_\# \| \theta\|_{\ell,\Gamma} ^\ell \leq  \\ \leq
\frac{ 1 }{\tau} \|f\|_{2,\Omega}\|\theta\|_{2,\Omega}+
F^\#\| \sqrt{\sigma(u)} \nabla\phi \|_{2,\Omega} \|\sqrt{\sigma(u)} \nabla \theta\|_{2,\Omega}+
\|H\|_{\ell ',\Gamma}\|\theta\|_{\ell,\Gamma} .
\end{eqnarray*}
Applying (\ref{cotaphi}) and the Young inequality,  we compute
\begin{eqnarray*}
\|\sqrt{\sigma(u)} \nabla\phi\|_{2,\Omega}\|\sqrt{\sigma(u)} \nabla \theta\|_{2,\Omega}
\leq \frac{a_\#}{2} \|\nabla \theta\|_{2,\Omega}^2 + \\
+\frac{\sigma^\# }{a_\#} \left(\sigma^\# (\alpha^\#)^2 \|\nabla u\|_{2,\Omega} ^2
+\frac{K_2^2(P_2+1) ^2}{\sigma_\#}\|g\|_{2,\Gamma_\mathrm{N}}^2 \right).
\end{eqnarray*}
Then, arguing as in (\ref{eqrbum}), we deduce (\ref{cotatt}).
\end{proof}

Next, let us determine whose radius make possible that  the operator $\mathcal{T}$ maps a closed ball into itself.
\begin{proposition}\label{bound}
Let (\ref{asfg}) be fulfilled.
For $\tau\leq a_\# / b_\#$ and  $R>0$ being defined as in (\ref{defr}),
the operator $\mathcal{T}$ verifies $\mathcal{T}(\overline{B_R})\subset \overline{B_R}$, with
 $B_R$ denoting the open ball of $H^1(\Omega)$ with radius $R$.
\end{proposition}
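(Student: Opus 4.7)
The plan is to exploit the quantitative estimate (\ref{cotatt}) of Proposition \ref{propt} together with the strict smallness condition (\ref{asfg}) in order to self-close the bound on $\theta=\mathcal{T}(u)$ in terms of $u$, and thereby identify an $H^1$-ball stable under $\mathcal{T}$.

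First, given $u\in\overline{B_R}$, I would invoke Proposition \ref{propphi} to construct the unique $\phi=\mathcal{F}(u)\in V$ satisfying (\ref{cotaphi}), and then feed $(u,\phi)$ into Proposition \ref{propt} to obtain the unique $\theta=\mathcal{T}(u)\in V_\ell(\Omega)$ for which (\ref{cotatt}) holds. No smallness condition enters in these two existence/estimate steps; they are there to set up the right-hand side of (\ref{cotatt}) as an explicit function of $\|\nabla u\|_{2,\Omega}$ and the data $f,H,g$.

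Second, I would rewrite the offending coefficient of $\|\nabla u\|_{2,\Omega}^2$ on the right-hand side of (\ref{cotatt}) as $(F^\#\sigma^\#\alpha^\#)^2/a_\#$. By the hypothesis (\ref{asfg}) one has $(F^\#\sigma^\#\alpha^\#)^2<a_\#^2/4$, so this coefficient is strictly less than $a_\#/4$, which is exactly one half of the coefficient $a_\#/2$ of $\|\nabla\theta\|_{2,\Omega}^2$ on the left-hand side of (\ref{cotatt}). This factor $1/2<1$ is the decisive consequence of (\ref{asfg}): inserting $\|\nabla u\|_{2,\Omega}^2\leq R^2$ into (\ref{cotatt}) yields
\[
\|\nabla\theta\|_{2,\Omega}^2 \;\leq\; \tfrac{2}{a_\#}\,C_{\mathrm{data}} + \tfrac12\,R^2,
\]
where $C_{\mathrm{data}}$ collects the terms involving $\|f\|_{2,\Omega}^2$, $\|H\|_{\ell',\Gamma}^{\ell'}$ and $\|g\|_{2,\Gamma_\mathrm{N}}^2$ appearing on the right of (\ref{cotatt}).

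Third, the time-step restriction $\tau\leq a_\#/b_\#$ enters when one closes the $L^2$-part of the bound. It converts the coefficient $b_\#/(2\tau)$ appearing on the left of (\ref{cotatt}) into a $\tau$-independent lower bound of order $b_\#^2/(2 a_\#)$, producing a companion inequality for $\|\theta\|_{2,\Omega}^2$ of the same structure, $\|\theta\|_{2,\Omega}^2\leq C'_{\mathrm{data}}+\kappa R^2$ with some $\kappa<1$ induced again by (\ref{asfg}). Summing both bounds and calibrating the radius $R$ exactly as in (\ref{defr}) to be large enough to absorb the additive data constants while keeping the total $R^2$-coefficient strictly below unity, one concludes $\|\theta\|_{H^1(\Omega)}^2\leq R^2$, i.e.\ $\mathcal{T}(\overline{B_R})\subset\overline{B_R}$.

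The main obstacle is precisely the third step: propagating the self-closure from the gradient part to the $L^2$ part so that the resulting radius $R$ is genuinely independent of the time step $\tau$. This is why the statement imposes the upper bound $\tau\leq a_\#/b_\#$, which is matched to the diffusive scaling of the evolution and allows the full $H^1$-norm of $\theta$ to be controlled uniformly in $\tau$; the precise definition of $R$ in (\ref{defr}) must be tuned to reconcile the $\|\nabla\theta\|_{2,\Omega}^2$ and $\|\theta\|_{2,\Omega}^2$ contributions with the additive data terms.
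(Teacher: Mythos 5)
Your overall strategy coincides with the paper's: feed $\phi=\mathcal{F}(u)$ from Proposition \ref{propphi} into Proposition \ref{propt}, use (\ref{cotatt}) with $\|\nabla u\|_{2,\Omega}\le R$, and let (\ref{asfg}) make the $R$-dependent contribution subcritical. Your identification of the coefficient of $\|\nabla u\|_{2,\Omega}^2$ as $(F^\#\sigma^\#\alpha^\#)^2/a_\#<a_\#/4$ is correct. But your third step does not close as stated. You bound $\|\nabla\theta\|_{2,\Omega}^2\le \tfrac{2}{a_\#}C_{\mathrm{data}}+\tfrac12 R^2$ and separately $\|\theta\|_{2,\Omega}^2\le C'_{\mathrm{data}}+\kappa R^2$ with ``some $\kappa<1$'', and then sum; but $\tfrac12+\kappa<1$ requires $\kappa<\tfrac12$, which neither (\ref{asfg}) nor your reading of the time-step condition delivers. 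Indeed, dividing the $L^2$-part of (\ref{cotatt}) by $b_\#/(2\tau)$ gives the coefficient $\tfrac{2\tau}{b_\#}\cdot\tfrac{(F^\#\sigma^\#\alpha^\#)^2}{a_\#}$ for $R^2$, and under the literal hypothesis $\tau\le a_\#/b_\#$ this is only bounded by $a_\#^2/(2b_\#^2)$, which can exceed $1$. Moreover, even if both coefficients were exactly $\tfrac12$, the total would be $1$ and leave no room to absorb the additive data constants, so the strictness of (\ref{asfg}) must be tracked explicitly rather than rounded to $\tfrac12$.

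The paper closes the argument differently and more economically: it uses the time-step restriction in the form $\min\{b_\#/\tau,a_\#\}=a_\#$ (i.e. $b_\#/\tau\ge a_\#$ --- note the apparent mismatch with the statement's $\tau\le a_\#/b_\#$), so that the two left-hand terms of (\ref{cotatt}) jointly dominate $\tfrac{a_\#}{2}\|\theta\|_{H^1(\Omega)}^2$ and a single division controls the full $H^1$-norm at once; it then takes square roots, applies the subadditivity $\sqrt{A+B+C}\le\sqrt{A}+\sqrt{B}+\sqrt{C}$, and defines $R$ by the \emph{linear} equation (\ref{defr}), whose factor $\sqrt{a_\#}-2F^\#\sigma^\#\alpha^\#/\sqrt{a_\#}$ is positive precisely by (\ref{asfg}); the chain $\sqrt{a_\#}\,\|\theta\|_{H^1(\Omega)}\le(\mbox{data terms})+\tfrac{2F^\#\sigma^\#\alpha^\#}{\sqrt{a_\#}}R=\sqrt{a_\#}\,R$ then gives the invariance, so $R$ is not obtained by your quadratic calibration. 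Your split-and-sum route can be repaired, but only by keeping the strict coefficient $\mu:=2(F^\#\sigma^\#\alpha^\#)^2/a_\#^2<\tfrac12$ in both partial bounds and taking $\tau\le b_\#/a_\#$, which yields the total coefficient $\mu(1+\tau a_\#/b_\#)\le 2\mu<1$. Finally, note that $R$ in (\ref{defr}) contains $(\tau b_\#)^{-1}\|f\|_{2,\Omega}^2$ and is therefore \emph{not} uniform in $\tau$; uniformity in the time step is recovered only later by the telescoping in Proposition \ref{tmzm}, so the role you assign to the restriction on $\tau$ is not the one operative in this proposition.
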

\begin{proof}
Let  $u\in H^1(\Omega)$ and $\theta=\mathcal{T}(u)$ be the unique solution according to Proposition \ref{propt}.
Considering (\ref{cotatt}) and $\min\left\lbrace b_\#/\tau ,a_\#\right\rbrace=a_\#$,
 the proof is complete by defining $R$ such that
\begin{eqnarray}
R\left(\sqrt{ a_\# }-2
\frac{F^\# \sigma^\#\alpha^\# }{ \sqrt{a_\#}} \right) &=&\left(
\frac{ 1 }{\tau b_\# }\| f \|_{2,\Omega}^2 + 
\frac{2}{\ell '\gamma_\#^{1/(\ell-1)}} \| H \|_{\ell ',\Gamma}^{\ell '} \right)^{1/2}+\nonumber \\
&&+ 2
F^\# K_2(P_2+1) \sqrt{\frac{\sigma^\#}{  a_\#\sigma_\#}}\|g\|_{2,\Gamma_\mathrm{N}} , \quad \label{defr}
\end{eqnarray}
taking the assumption (\ref{asfg}) be account.
\end{proof}

\begin{proposition}\label{contnt2}
Let $\lbrace u_m\rbrace_{m\in\mathbb{N}}$ be a sequence such that
weakly converges to $u$ in $ H^1(\Omega)$, then
 the solution $(\theta_m,\phi_m)$ according to Propositions \ref{propphi} and \ref{propt} weakly converges in 
 $V_\ell (\Omega)\times H^1(\Omega)$, and its limit is a solution according to Propositions \ref{propphi} and \ref{propt}
\end{proposition}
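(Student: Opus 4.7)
The plan is to follow the strategy of the proof of Proposition \ref{contnt}, suitably adapted to the sequential dependence $u_m\to\phi_m\to\theta_m$ dictated by Propositions \ref{propphi} and \ref{propt}. First, from $u_m\rightharpoonup u$ in $H^1(\Omega)$ the sequence $\|\nabla u_m\|_{2,\Omega}$ is bounded, so (\ref{cotaphi}) yields a uniform bound on $\|\nabla\phi_m\|_{2,\Omega}$, and then (\ref{cotatt}) yields a uniform bound on $\theta_m$ in $V_\ell(\Omega)$. Along a subsequence one obtains $\phi_m\rightharpoonup \phi$ in $V$ and $\theta_m\rightharpoonup \theta$ in $V_\ell(\Omega)$, together with the strong Rellich-Kondrachov convergences of $u_m$, $\phi_m$ and $\theta_m$ in $L^2(\Omega)$, in $L^p(\Gamma)$ for $1<p<\ell$, and a.e.\ in $\Omega$.

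The identification of the $\phi$-limit is essentially routine because (\ref{wvfphi}) is linear in $\phi$: the Krasnoselski theorem applied to the Nemytskii operators $\sigma$ and $\sigma\alpha_\mathrm{S}$ (via (\ref{smm}), (\ref{amax}) and Lebesgue dominated convergence) gives strong convergence of the coefficients tested against any fixed $w\in V$. Pairing with $\nabla\phi_m\rightharpoonup\nabla\phi$ on the left and $\nabla u_m\rightharpoonup\nabla u$ on the right delivers the limit equation, and uniqueness in Proposition \ref{propphi} identifies $\phi$ as the solution associated to $u$.

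For the $\theta$-equation (\ref{wvftt}), the $\theta$-linear terms and the flux $\sigma(u_m)F(\mathbf{u}_m)\nabla\phi_m$ pass to the limit by the same Krasnoselski mechanism, invoking in addition (\ref{bmm}), (\ref{amm}) and (\ref{isof}) as well as the a.e.\ convergence of $\phi_m$. The main obstacle, exactly as in Proposition \ref{contnt}, is the identification of the weak $L^{\ell'}(\Gamma)$ limit $\gamma(u)\Lambda$ of the nonlinear boundary contribution $\gamma(u_m)|\theta_m|^{\ell-2}\theta_m$.

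I would resolve this identification by the Minty trick already executed in Proposition \ref{contnt}, now in a simpler form because the coercivity is provided by the scalar constant $a_\#$ rather than by the matrix $\mathsf L$. Monotonicity of $z\mapsto |z|^{\ell-2}z$ together with (\ref{amm}) give, for any test $v\in V_\ell(\Omega)$,
\begin{equation*}
0\le \int_\Omega a(\mathbf{u}_m)|\nabla(\theta_m-v)|^2\mathrm{dx} + \int_\Gamma \gamma(u_m)\bigl(|\theta_m|^{\ell-2}\theta_m - |v|^{\ell-2}v\bigr)(\theta_m-v)\mathrm{ds}.
\end{equation*}
Expanding the right-hand side and using $\theta_m$ itself as test function in (\ref{wvftt})$_m$ to compute $\int_\Omega a(\mathbf{u}_m)|\nabla\theta_m|^2\mathrm{dx}$, then passing to the limit and comparing with the limit equation tested against $\theta$, should produce
\begin{equation*}
\int_\Omega a(\mathbf{u})|\nabla(\theta-v)|^2\mathrm{dx} + \int_\Gamma \gamma(u)(\Lambda - |v|^{\ell-2}v)(\theta-v)\mathrm{ds}\ge 0.
\end{equation*}
The standard Minty device $v=\theta-\delta\varphi$ with $\varphi\in\mathcal{D}(\Gamma)$, division by $\delta>0$ and $\delta\to 0^+$, then identifies $\Lambda=|\theta|^{\ell-2}\theta$ on $\Gamma$. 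Finally, uniqueness in Propositions \ref{propphi} and \ref{propt} promotes subsequential convergence to convergence of the full sequence, completing the proof.
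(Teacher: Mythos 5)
Your overall strategy coincides with the paper's: uniform bounds from (\ref{cotaphi}) and (\ref{cotatt}), extraction of a weakly convergent subsequence, Krasnoselski/dominated convergence for the coefficients, and a Minty argument for the boundary nonlinearity (the paper itself only writes ``similar arguments in the proof of Proposition \ref{contnt}''). Your identification of the $\phi$-limit, and of every term in (\ref{wvftt}) that is paired with a \emph{fixed} test function $v$, is fine.

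The gap is in your claim that the Minty step comes ``in a simpler form because the coercivity is provided by the scalar constant $a_\#$''. After you expand $\int_\Omega a(\mathbf{u}_m)|\nabla(\theta_m-v)|^2\,\mathrm{dx}$ and use $\theta_m$ as test function in (\ref{wvftt})$_m$, the resulting identity for $\int_\Omega a(\mathbf{u}_m)|\nabla\theta_m|^2\,\mathrm{dx}+\int_\Gamma\gamma(u_m)|\theta_m|^{\ell}\,\mathrm{ds}$ contains the term $\int_\Omega \sigma(u_m)F(\mathbf{u}_m)\nabla\phi_m\cdot\nabla\theta_m\,\mathrm{dx}$, a product of two sequences that converge only \emph{weakly} in $\mathbf{L}^2(\Omega)$; and $\nabla\phi_m$ cannot be upgraded to strong convergence, since the right-hand side of (\ref{wvfphi}) contains $\nabla u_m$, itself only weakly convergent. ``Passing to the limit'' in this term is precisely the obstruction that the proof of Proposition \ref{contnt} is built to circumvent: there, the full quadratic form of the matrix $\mathsf{L}$ is retained in the inequality (\ref{jmm}), so that the dangerous cross terms occur only through the differences $\nabla(\theta_m-v)$ and $\nabla(\phi_m-\phi)$ and are absorbed by the coercivity constants $(L_1)_\#$, $(L_2)_\#$, while the diagonal sum $\mathcal{J}_m$ is evaluated exactly by testing \emph{both} equations with the solutions themselves. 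Dropping the $\sigma$-block is therefore not an available simplification: you need to reinstate the terms $\int_\Omega\sigma(u_m)|\nabla(\phi_m-\phi)|^2\,\mathrm{dx}$, $\int_\Omega\sigma(u_m)F(\mathbf{u}_m)\nabla(\phi_m-\phi)\cdot\nabla(\theta_m-v)\,\mathrm{dx}$ and $\int_\Omega\sigma(u_m)\alpha_\mathrm{S}(u_m)\nabla(\theta_m-v)\cdot\nabla(\phi_m-\phi)\,\mathrm{dx}$, or otherwise explain why the cross term converges; note that in the present decoupled setting this also forces you to deal with the weak--weak product $\int_\Omega\sigma(u_m)\alpha_\mathrm{S}(u_m)\nabla u_m\cdot\nabla\phi_m\,\mathrm{dx}$ arising when (\ref{wvfphi}) is tested with $\phi_m$. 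As written, the step ``then passing to the limit \ldots should produce'' the Minty inequality does not go through.
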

\begin{proof} Let
$(\theta_m,\phi_m)$ be  the solution  according to Propositions \ref{propphi} and \ref{propt}
 and corresponding to $u_m$ for each $m\in\mathbb{N}$.
The estimates (\ref{cotaphi}) and (\ref{cotatt}) guarantee that the sequence $(\theta_m,\phi_m)$  is uniformly
 bounded in $V_\ell (\Omega)\times H^1(\Omega)$.  Thus, we can extract  a subsequence of $(\theta_m,\phi_m)$
still denoted by $(\theta_m,\phi_m)$, weakly convergent to $(\theta,\phi)$ in $V_\ell (\Omega)\times H^1(\Omega)$.
 Similar arguments in the proof of Proposition \ref{contnt} the weak limit $(\theta,\phi)$
 solves the variational system consisting of
(\ref{wvfphi}) and (\ref{wvftt}), which concludes the proof of Proposition \ref{contnt2}.
\end{proof}

Thanks to Propositions \ref{propphi}, \ref{propt}, \ref{bound} and \ref{contnt2}, there exists at least one fixed point of 
\[
\mathcal{T}:u\mapsto (u, \mathcal{F}(u))\mapsto \theta,
\] that is $\theta=\mathcal{T}(\theta)$ and $\phi=\mathcal{F}(\theta)$, which concludes the
solvability  to (\ref{wvfttm2})-(\ref{wvfphim2}).

\section{Time discretization technique}
\label{tdt}

In this section, we apply the method of discretization in time
\cite{kacur,plusweber,karel}.

We decompose the time interval $I=[0,T]$ into $M$ subintervals $I_{m,M}$ of size
$\tau$ such that $M=T/\tau \in\mathbb N$,  \textit{i.e.}
$I_{m,M}=[(m-1)T/M,mT/M]$ for  $m\in\{1,\cdot\cdot\cdot,M\}$.
We set $t_{m,M}=mT/M$.
 Thus, the problem (\ref{wvftta}) is approximated by the following recurrent sequence of time-discretized problems
\begin{eqnarray}
\frac{1}{\tau}\int_\Omega B (\theta^m) v\mathrm{dx}+\int_\Omega
 a (\theta^m,\phi^m )\nabla \theta^{m}\cdot\nabla v\mathrm{dx}
+\int_\Gamma \gamma(\theta^m)|\theta^m|^{\ell-2} \theta^{m} v \mathrm{ds} +\nonumber \\ +
\int_\Omega \sigma(\theta^m)  F( \theta^m,\phi^m ) \nabla\phi^m \cdot\nabla v\mathrm{dx} 
 = 
\frac{1}{\tau}\int_\Omega B (\theta^{m-1} )v\mathrm{dx}
+\int_\Gamma  h(t_{m,M}) v  \mathrm{ds}  ,\label{td2}
\end{eqnarray}
for all $v\in V_{\ell}(\Omega)$, and the problem (\ref{wvfpha}) is approximated by either
 (\ref{wvfphim}) or (\ref{wvfphim2}) for all $w\in V$, corresponding to the two different approaches.
The existence of weak solutions pair $(\theta^m,\phi^m) \in V_{\ell}(\Omega)\times V$ to the above systems of elliptic problems  is established in Section  \ref{sfpt}
with $H =h(t_{m,M})$.

Since $\theta^0\in L^2(\Omega)$ is known, we determine $(\theta^{1},\phi^1)$
 as the unique solution
of   the Neumann-power type elliptic problems (\ref{wvfttm})-(\ref{wvfphim}) or
(\ref{wvfttm2})-(\ref{wvfphim2}),
 and we inductively proceed. 

Denote  by 
$\{  {\theta}_M\}_{M\in\mathbb N}$, $\{  {\phi}_M\}_{M\in\mathbb N}$
 and $\{  Z_M\}_{M\in\mathbb N}$ 
 the sequences of the (piecewise constant in time) functions,
$   {\theta}_M : [0,T] \rightarrow   V_\ell(\Omega)$,
$   {\phi}_M : ]0,T] \rightarrow   V$ 
and  $  Z_M:[0,T]\rightarrow L^2(\Omega)$, defined by, respectively,  a.e. in $\Omega$
\begin{eqnarray}\label{deftm}
   {\theta}_M (t) &:=& \left\{\begin{array}{ll}
\theta^{0}&\mbox{ for }t=0\\
 \theta^{m}&\mbox{ for } t\in ]t_{m-1,M},t_{m,M}]
\end{array}\right.  \\
    {\phi}_M (t) &:=& \phi^m \quad\mbox{ for all }t\in ]t_{m-1,M},t_{m,M} ],\label{defpm}
\end{eqnarray}
in accordance with one of the two variational formulations  (\ref{wvfphim}) and (\ref{wvfphim2}),
while
\begin{equation}\label{defzm}
  Z_M (t):=\left\{\begin{array}{ll}
B(\theta^{0} ) &\mbox{ for }t=0\\
Z^{m}&\mbox{ for } t\in ]t_{m-1,M},t_{m,M}]
\end{array}\right. \mbox{ in }\Omega,
\end{equation}
with 
 the discrete derivative with respect to $t$ at the time $t=t_{m,M}$:
\[
Z^{m}:=\frac{B(\theta^{m})-B(\theta^{m-1}) }{\tau}.
\]

While $\theta_M$ is the Rothe function obtained from $\theta^m$ by piecewise constant interpolation 
with respect to time $t$,
 the Rothe function, obtained from $\theta^m$ by piecewise linear interpolation with respect to time $t$, 
  $\Theta_M$ is
  \[
\Theta_M(\cdot,t)=\theta^{m-1}+(t-t_{m-1,M}) \frac{\theta^m- \theta^{m-1}}{  \tau }.
  \]
For our purposes, we introduce the following definition.
\begin{definition} \label{defro}
We say that  $\{ \widetilde{B}_M= \widetilde{B}(\theta_M)\}_{M\in\mathbb N}$ is the Rothe sequence (affine
on each time interval) if
\[
 \widetilde{B}(\cdot,\theta_M (t)) = B(\cdot,\theta^{m-1})  + \frac{ t-t_{m-1,M}}{\tau}
\left( B(\cdot,\theta^m) - B(\cdot,\theta^{m-1}) \right)
  \]
in  $\Omega$, for all $t\in I_{m, M}$,
for all  $m\in \{1,\cdots,M\}$.
  \end{definition} 
  
 Denoting $h_M(t)=h(t_{m,M})$ for $t\in ]t_{m-1,M},t_{m,M}]$ and $m\in\{1,\cdots,M\}$, the triple
  $ ({\theta}_M ,\phi_M,{Z}_M )$ solve
 \begin{eqnarray}
\int_0^T\int_\Omega  {Z}_M v\mathrm{dx}\mathrm{dt} +
\int_{Q_T}  a ( {\theta}_M ,   {\phi}_M  )\nabla  {\theta}_M \cdot\nabla v\mathrm{dx}\mathrm{dt}
+\int_{\Sigma_T} \gamma( {\theta}_M )| {\theta}_M |^{\ell-2}  {\theta}_M  v \mathrm{ds}\mathrm{dt}  +\nonumber \\
  +
\int_{Q_T} \sigma ( \theta_M ) F(  \theta_M ,   \phi_M  ) \nabla   {\phi}_M  \cdot\nabla v\mathrm{dx}\mathrm{dt}
= 
\int_{\Sigma_T}  h_M v  \mathrm{ds}\mathrm{dt} .\qquad \label{tdM}
\end{eqnarray}

\subsection{Proof of Theorem \ref{abst1}}
\label{sabst1}
\

Here,  $   {\phi}_M $ solves
\begin{equation}\label{pbphiM}
\int_\Omega\sigma (\theta_M)\nabla\phi_M\cdot\nabla w\mathrm{dx}
 + \int_\Omega \sigma (\theta_M) \alpha_\mathrm{S}(\theta_M)
\nabla\theta_M \cdot \nabla w \mathrm{dx}=\int_{\Gamma_\mathrm{N}}  g w \mathrm{ds} ,
\end{equation}
for $w\in V$ and a.e. in $]0,T[$.
  
We begin by establishing the uniform  estimates to $   {\theta}_M $ and $   {\phi}_M $.
\begin{proposition}\label{tmzm}
Let $   {\theta}_M $ and $   {\phi}_M $  be the (piecewise constant in time) functions defined in
 (\ref{deftm})-(\ref{defpm}).
Then the following estimate holds:
\begin{eqnarray}\label{cotaul}
\max_{1\leq m\leq M} \int_\Omega \Psi( \theta^m )\mathrm{dx}+( L_1)_\# \| \nabla \theta_M\|_{2,Q_T} ^2 
+\frac{(L_{2})_\# }{2}\| \nabla \phi_M\|_{2,Q_T} ^2 +
\nonumber\\
+\frac{\gamma_\#}{\ell '} \|  \theta_M\|_{\ell,\Sigma_T } ^\ell 
\leq b^\#  \| \theta^0\|_{2,\Omega} ^2 +
\frac{1}{\ell '\gamma_{\#}^{1/(\ell-1)} } \| h \|_{\ell',\Sigma_T }^{\ell '} +
T \frac{K_2^2(P_2+1)^2}{2 (L_{2})_\#} \|g\|_{2,\Gamma_\mathrm{N}}^2 .
 \end{eqnarray}
\end{proposition}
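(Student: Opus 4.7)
The plan is to mimic, at the discrete level, the energy argument used in Proposition \ref{propu}, and then convert finite sums over $m$ into space--time integrals by multiplying through by $\tau$.

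First, for each fixed $m\in\{1,\ldots,M\}$ I would test (\ref{wvfttm}) with $v=\theta^m$ and (\ref{wvfphim}) with $w=\phi^m$, and add the two identities. The mixed cross terms
$\int_\Omega \sigma(\theta^m)F(\theta^m,\phi^m)\nabla\phi^m\cdot\nabla\theta^m\,\mathrm{dx}$ and
$\int_\Omega \sigma(\theta^m)\alpha_{\mathrm S}(\theta^m)\nabla\theta^m\cdot\nabla\phi^m\,\mathrm{dx}$
are controlled exactly as in the proof of Proposition \ref{propu} via the coercivity inequality (\ref{Lcoer}) of the matrix $\mathsf L$, yielding a lower bound
\[
(L_1)_\#\|\nabla\theta^m\|_{2,\Omega}^2+(L_2)_\#\|\nabla\phi^m\|_{2,\Omega}^2
\]
on the diffusive quadratic form, where the constants come from either (\ref{L1c}) or (\ref{L2c}) according to the chosen smallness hypothesis.

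Next, the discrete time derivative term $\frac1\tau\int_\Omega(B(\theta^m)-B(\theta^{m-1}))\theta^m\,\mathrm{dx}$ is handled by the key inequality (\ref{bpsi}) from Lemma \ref{lbmm}, which gives
\[
\frac{1}{\tau}\int_\Omega(B(\theta^m)-B(\theta^{m-1}))\theta^m\,\mathrm{dx}
\geq \frac{1}{\tau}\Bigl(\int_\Omega\Psi(\theta^m)\,\mathrm{dx}-\int_\Omega\Psi(\theta^{m-1})\,\mathrm{dx}\Bigr).
\]
On the boundary $\Sigma$ the term $\int_\Gamma\gamma(\theta^m)|\theta^m|^\ell\,\mathrm{ds}$ is bounded below using (\ref{gamm}), and the right-hand side data terms are estimated by H\"older and Young inequalities together with the trace bound (\ref{k2p2}), exactly as in (\ref{eqrbum}).

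The main (purely technical) step is then the telescoping: multiplying the resulting inequality by $\tau$ and summing over $m=1,\ldots,m^\ast$ for any $m^\ast\leq M$, the discrete derivative collapses to $\int_\Omega\Psi(\theta^{m^\ast})\,\mathrm{dx}-\int_\Omega\Psi(\theta^{0})\,\mathrm{dx}$, while $\tau\sum_{m}\|\nabla\theta^m\|_{2,\Omega}^2$ and $\tau\sum_m\|\nabla\phi^m\|_{2,\Omega}^2$ become $\|\nabla\theta_M\|_{2,Q_T}^2$ and $\|\nabla\phi_M\|_{2,Q_T}^2$ by the piecewise constant definitions (\ref{deftm})--(\ref{defpm}); similarly for the $L^\ell(\Sigma_T)$ boundary integral and the $L^{\ell'}(\Sigma_T)$ datum $h_M$. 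The Neumann datum $g$ is time independent, so $\tau\sum_{m=1}^{M}\|g\|_{2,\Gamma_{\mathrm N}}^2=T\|g\|_{2,\Gamma_{\mathrm N}}^2$, which produces the last term on the right-hand side of (\ref{cotaul}). Finally, using the upper bound $\Psi(\theta^0)\leq\int_\Omega B(\theta^0)\theta^0\,\mathrm{dx}\leq b^\#\|\theta^0\|_{2,\Omega}^2$ from Lemma \ref{lbmm} and taking the maximum over $m^\ast$, I obtain (\ref{cotaul}).

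The only subtle point is bookkeeping: one must absorb $\frac12(L_2)_\#\|\nabla\phi^m\|_{2,\Omega}^2$ coming from the Young estimate of $\int_{\Gamma_{\mathrm N}}g\phi^m\,\mathrm{ds}$ into the left-hand side (which explains the factor $1/2$ in front of $(L_2)_\#$ in (\ref{cotaul})), and to choose the Young splitting of $\|h_M\|_{\ell',\Gamma}\|\theta^m\|_{\ell,\Gamma}$ so that $\gamma_\#/\ell$ is absorbed into $\gamma_\#\|\theta^m\|_{\ell,\Gamma}^\ell$ and only $\gamma_\#/\ell'$ remains; both of these are exactly the balances already carried out in (\ref{eqrbum}).
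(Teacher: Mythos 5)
Your proposal is correct and follows essentially the same route as the paper: testing with $v=\theta^m$, $w=\phi^m$, invoking the coercivity (\ref{Lcoer}) and the Young/trace estimates of (\ref{eqrbum}), applying (\ref{bpsi}) to telescope the discrete time-derivative into $\int_\Omega\Psi(\theta^{m})\,\mathrm{dx}-\int_\Omega\Psi(\theta^{0})\,\mathrm{dx}$, multiplying by $\tau$ and summing, and finally bounding $\int_\Omega\Psi(\theta^0)\,\mathrm{dx}$ by $b^\#\|\theta^0\|_{2,\Omega}^2$ via Lemma \ref{lbmm}. The bookkeeping points you flag (the factor $1/2$ on $(L_2)_\#$ and the $\gamma_\#/\ell'$ remainder) are exactly the balances the paper carries out.
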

\begin{proof}
 Let $m\in \{1,\cdots,M\}$ be arbitrary. 
Choosing $v=\theta^m\in V_{\ell}(\Omega)$ and $w=\phi^m \in V$ as  test functions
 in (\ref{wvfttm})-(\ref{wvfphim}), we sum the obtained relations,
and 
arguing as in (\ref{eql1l2})-(\ref{eqrbum}), we have
\begin{eqnarray}
\frac{1}{\tau}
\int_\Omega (B(\theta^m)-B(\theta^{m-1}) )\theta^m\mathrm{dx} + 
( L_1)_\# \|\nabla \theta^{m} \|_{2,\Omega}^2 +
 \frac{(L_{2})_\#}{2}  \|\nabla\phi ^{m} \|_{2,\Omega}^2 +\nonumber \\ 
+
\int_\Gamma \gamma(\theta^{m})|\theta^{m}|^{\ell}  \mathrm{ds}
\leq \mathcal{R}(0, \| h(t_{m,M}) \|_{\ell ',\Gamma}^{\ell '} )
+\frac{1}{\ell} 
\int_\Gamma \gamma(\theta^{m})|\theta^{m}|^{\ell}  \mathrm{ds}, \label{eqbb}
\end{eqnarray}
with $\mathcal{R}$ being the increasing continuous function defined in (\ref{cotatphi}).
By  (\ref{bpsi}), we have
\begin{eqnarray*}
\sum_{i=1}^m\int_\Omega (B(\theta^i)-B(\theta^{i-1}) )\theta^i\mathrm{dx} \geq
\int_\Omega (\Psi(\theta^m) - \Psi(\theta^{0})  )\mathrm{dx} .
\end{eqnarray*}

Therefore, summing over $i=1,\cdots, m$
into (\ref{eqbb}), multiplying by $\tau$, and  inserting the previous inequality, we obtain
\begin{eqnarray}
 \int_\Omega \Psi( \theta^m )\mathrm{dx}& +&
\tau\sum_{i=1}^m \left( ( L_1)_\# \|\nabla \theta^{i} \|_{2,\Omega}^2
+\frac{1}{\ell '} 
\int_\Gamma \gamma(\theta^{i})|\theta^{i}|^{\ell}  \mathrm{ds} +
 \frac{(L_{2})_\#}{2} \|\nabla\phi ^{i} \|_{2,\Omega}^2 \right) \nonumber \\ &\leq  &
 \int_\Omega \Psi( \theta^0 )\mathrm{dx} +  
 \tau\sum_{i=1}^m\mathcal{R}(0,  \| h(t_{m,M}) \|_{\ell ',\Gamma}^{\ell '} ) . \label{cotat2}
\end{eqnarray}
Therefore, we find the uniform estimate (\ref{cotaul}) by taking the maximum over $m\in\{1,\cdots,M\}$ in the previous estimate
and applying Lemma \ref{lbmm} provided by (\ref{bmm}).
\end{proof}

A direct application of Proposition \ref{tmzm} ensures the following proposition.
\begin{proposition}\label{ttphi}
There exist $\theta,\phi: Q_T\rightarrow\mathbb{R}$ and subsequences of  $(\theta_M,\phi_M)$,
still labelled by  $(\theta_M,\phi_M)$,  such that
\begin{eqnarray}
  {\theta}_M \rightharpoonup \theta &\mbox{ in }& V_\ell(Q_T); \\
  \phi_M \rightharpoonup \phi& \mbox{ in }& L^2(0,T;V),
\end{eqnarray}
 as $M$ tends to infinity.
 Moreover, there exists $Z: Q_T\rightarrow\mathbb{R}$ such that
\[
\partial_t\widetilde{B}(\theta_M)\rightharpoonup Z \quad \mbox{in } L^{\ell ' }(0,T;(V_\ell(\Omega))'),
\]
 as $M$ tends to infinity.
\end{proposition}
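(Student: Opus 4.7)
My plan is to read off the first two weak convergences directly from the uniform estimate (\ref{cotaul}) of Proposition \ref{tmzm}, and then separately establish a uniform bound on the discrete time derivative $\partial_t\widetilde{B}(\theta_M)$ in $L^{\ell'}(0,T;(V_\ell(\Omega))')$ by testing the time-discretized equation (\ref{wvfttm}) against an arbitrary $v\in V_\ell(\Omega)$.

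\textbf{Step 1.} First I would upgrade (\ref{cotaul}) to a bound on $\theta_M$ in the full norm of $V_\ell(Q_T)$. The estimate already controls $\|\nabla\theta_M\|_{2,Q_T}$ and $\|\theta_M\|_{\ell,\Sigma_T}$; to control $\|\theta_M\|_{2,Q_T}$, I would exploit the pointwise lower bound $\Psi(s)\geq (b_\#/2)s^2$ (a consequence of $b\geq b_\#$ and the monotonicity of $B$), which, combined with the time-uniform estimate on $\int_\Omega\Psi(\theta^m)\mathrm{dx}$, yields $\theta_M$ bounded in $L^\infty(0,T;L^2(\Omega))$. Likewise (\ref{cotaul}) gives $\phi_M$ bounded in $L^2(0,T;V)$. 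Reflexivity of $V_\ell(Q_T)$ and $L^2(0,T;V)$, followed by Banach--Alaoglu, produces the first two weak convergences along a common subsequence.

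\textbf{Step 2.} By construction $\partial_t\widetilde{B}(\theta_M)=Z_M$ a.e.\ in $]0,T[$, since $\widetilde{B}(\theta_M)$ is affine on each $I_{m,M}$. Testing (\ref{wvfttm}) against any $v\in V_\ell(\Omega)$ and applying H\"older gives, for $t\in ]t_{m-1,M},t_{m,M}]$,
\[
|\langle Z_M(t),v\rangle|\leq C\bigl(\|\nabla\theta^m\|_{2,\Omega}+\|\nabla\phi^m\|_{2,\Omega}+\|\theta^m\|_{\ell,\Gamma}^{\ell-1}+\|h(t_{m,M})\|_{\ell',\Gamma}\bigr)\|v\|_{V_\ell(\Omega)},
\]
with $C$ depending only on the structural constants $a^\#,\sigma^\#,F^\#,\gamma^\#$ and on the trace embedding $V_\ell(\Omega)\hookrightarrow L^\ell(\Gamma)$. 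Raising this inequality to the $\ell'$-th power, using $(\ell-1)\ell'=\ell$ for the boundary term, and summing $\tau\sum_{m=1}^M$ converts every contribution into a time integral. The boundary term becomes $\|\theta_M\|_{\ell,\Sigma_T}^\ell$, while the gradient contributions are handled by the discrete H\"older inequality $\tau\sum_m f_m^{\ell'}\leq T^{1-\ell'/2}(\tau\sum_m f_m^2)^{\ell'/2}$. All right-hand sides are uniformly bounded by Proposition \ref{tmzm} and (H4), so $Z_M$ is uniformly bounded in $L^{\ell'}(0,T;(V_\ell(\Omega))')$; a further extraction gives the claimed weak limit $Z$.

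\textbf{Main obstacle.} The delicate point is the time integration in Step 2: the natural bounds on $\nabla\theta_M$ and $\nabla\phi_M$ live in $L^2$ in time, while the dual norm of $Z_M$ must be raised only to the power $\ell'\leq 2$. I must verify that the discrete Jensen / H\"older conversion costs only the fixed factor $T^{1-\ell'/2}$, independent of the discretization parameter $M$. Once this is in hand, the remainder is a routine packaging of the uniform \textit{a priori} estimates of Proposition \ref{tmzm} together with weak compactness in reflexive spaces.
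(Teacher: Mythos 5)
Your proposal is correct and follows essentially the same route as the paper: weak compactness from the uniform estimate of Proposition \ref{tmzm}, plus a dual-norm bound on $Z_M=\partial_t\widetilde{B}(\theta_M)$ obtained by reading off the remaining terms of the time-discretized equation. You merely supply details the paper leaves implicit (the lower bound $\Psi(s)\geq b_\# s^2/2$ to control $\|\theta_M\|_{2,Q_T}$, and the discrete H\"older step converting the $L^2$-in-time gradient bounds into the $L^{\ell'}$-in-time dual estimate), both of which are carried out correctly.
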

\begin{proof}
Considering the uniform  estimates to $   {\theta}_M $ and $   {\phi}_M $  that are established in
Proposition \ref{tmzm}, we  extract  subsequences, still denoted by  $   {\theta}_M $  and $   {\phi}_M $,
weakly convergent in $  V_\ell (Q_T )$ and $ L^2(0,T;V)$, respectively, to $\theta$ and $\phi$.

Let  $p=\max\{\ell,2\}=\ell$. Note that $L^p(0,T;V_\ell(\Omega))\hookrightarrow V_\ell(Q_T)$.
By definition of norm, we find
\[ 
\|\partial_t\widetilde{B}(\theta_M)\|_{L^{p'}(0,T;(V_\ell(\Omega))')} = \sum_{m=1}^M
\int_{(m-1)\tau}^{m\tau} 
\sup_{v\in L^p(0,T;V_\ell(\Omega)) \atop \|v\|\leq 1} \langle Z^m,v\rangle  \mathrm{dt} 
\leq C,
\] 
with $C>0$ being a constant independent on $M$, by estimating in (\ref{td2})
the term involving $Z^m$
by means of the rest terms using  the uniform  estimates established in (\ref{cotat2}).
Hence, we can extract a subsequence, still denoted by $\partial_t\widetilde{B}(\theta_M)$, weakly
convergent to $Z$ in  $L^{p' }(0,T;(V_\ell(\Omega))')$.
\end{proof}

In the following proposition, we state the strong convergence  of $B(\theta_M)$ and of $\theta_M$.
\begin{proposition}\label{pbttm}
Under  (\ref{smm})-(\ref{gamm}) and  (\ref{isof})-(\ref{amm}), the solution $\theta^m$ of (\ref{td2}) satisfies
\begin{eqnarray}
b_\#\|\theta^{m }-\theta^{m-1 }\|_{2,\Omega}  ^2\leq
\int_\Omega (B(\theta^{m })-B(\theta^{m-1 })) (\theta^{m }-\theta^{m-1 }) \mathrm{dx} \leq \nonumber \\
\leq C\left(
 \|\theta^{m }-\theta^{m-1 }\|_{\ell,\Gamma}  \tau^{1/\ell } +  \| \theta^{m }-\theta^{m-1 }\|_{2,\Omega}\sqrt{\tau} \right),\label{bbj}
\end{eqnarray}
with  $C$ being a positive constant. Moreover,
for a subsequence, there hold 
\begin{eqnarray}
B(  {\theta}_M) \rightarrow B(\theta) &\mbox{ in }& L^1(Q_T); \\
 \theta_M \rightarrow \theta & \mbox{ a.e. in }& Q_T,
\end{eqnarray}
 as $M$ tends to infinity.
\end{proposition}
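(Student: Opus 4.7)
The statement has two distinct halves: the pointwise (in $m$) two-sided estimate \eqref{bbj} and the compactness conclusions ($L^{1}$-convergence of $B(\theta_{M})$, a.e.\ convergence of $\theta_{M}$). I would treat them separately and glue them with \textsc{Lemma}~\ref{lbmm2}.

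For the inequality \eqref{bbj}, the lower bound $b_{\#}\|\theta^{m}-\theta^{m-1}\|_{2,\Omega}^{2}\leq\int_{\Omega}(B(\theta^{m})-B(\theta^{m-1}))(\theta^{m}-\theta^{m-1})\mathrm{dx}$ is precisely the third assertion of \textsc{Lemma}~\ref{lbmm}. For the upper bound I would use the admissible test function $v=\theta^{m}-\theta^{m-1}\in V_{\ell}(\Omega)$ in the time-discretized equation \eqref{wvfttm} and multiply through by $\tau$; this isolates $\int_{\Omega}(B(\theta^{m})-B(\theta^{m-1}))(\theta^{m}-\theta^{m-1})\mathrm{dx}$ on the left as the sum of four $\tau$-weighted integrals on the right, which I estimate one by one with H\"older. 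The two boundary contributions, $\tau\!\int_{\Gamma}\gamma(\theta^{m})|\theta^{m}|^{\ell-2}\theta^{m}(\theta^{m}-\theta^{m-1})\mathrm{ds}$ and $\tau\!\int_{\Gamma}h_{m}(\theta^{m}-\theta^{m-1})\mathrm{ds}$, are the source of the $\tau^{1/\ell}\|\theta^{m}-\theta^{m-1}\|_{\ell,\Gamma}$ term: indeed, \textsc{Proposition}~\ref{tmzm} gives $\tau\sum_{m}\|\theta^{m}\|_{\ell,\Gamma}^{\ell}\leq C$ and (H4) gives $\tau\sum_{m}\|h_{m}\|_{\ell',\Gamma}^{\ell'}\leq C$, which after raising to the powers $(\ell-1)/\ell$ and $1/\ell'$ translate into $\tau\|\theta^{m}\|_{\ell,\Gamma}^{\ell-1}\leq C\tau^{1/\ell}$ and $\tau\|h_{m}\|_{\ell',\Gamma}\leq C\tau^{1/\ell}$. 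The two interior gradient contributions, involving $a\nabla\theta^{m}$ and $\sigma F\nabla\phi^{m}$, produce the $\sqrt{\tau}$-weighted term: writing $\tau\|\nabla\theta^{m}\|_{2,\Omega}=\sqrt{\tau}\cdot\sqrt{\tau}\|\nabla\theta^{m}\|_{2,\Omega}$ and using the per-step bounds $\sqrt{\tau}\|\nabla\theta^{m}\|_{2,\Omega}\leq C$ and $\sqrt{\tau}\|\nabla\phi^{m}\|_{2,\Omega}\leq C$ supplied by \eqref{cotaul} absorbs one factor of $\sqrt{\tau}$, leaving the remaining $\sqrt{\tau}$ in front of the norm of the increment.

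For the convergence conclusions, the strategy is to invoke \textsc{Lemma}~\ref{lbmm2} for $u_{m}=\theta_{M}$ with $p=2$. Its first hypothesis, $\int_{\Omega}\Psi(\theta_{M}(t))\mathrm{dx}\leq C$ uniformly in $t$, is immediate from \textsc{Proposition}~\ref{tmzm} once one recalls that $\Psi\leq B(\cdot)\cdot$ (second inequality in \textsc{Lemma}~\ref{lbmm}) and the $L^{\infty}(0,T;L^{2}(\Omega))$ bound built into \eqref{cotaul}. Its second hypothesis, the translation estimate \eqref{cbb}, is the heart of the matter: I would establish it first for $z=j\tau$, $j\in\mathbb{N}$, by the Alt--Luckhaus telescoping device. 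Namely, write $B(\theta^{m})-B(\theta^{m-j})=\sum_{k=m-j+1}^{m}(B(\theta^{k})-B(\theta^{k-1}))$, plug $v=\theta^{m}-\theta^{m-j}$ into \eqref{wvfttm} at step $k$ so that each telescopic summand is replaced by the associated right-hand side, and then sum over $m$. Swapping the order of the resulting double sum $\sum_{m}\sum_{k=m-j+1}^{m}$ into $\sum_{k}\sum_{m=k}^{k+j-1}$ introduces the prefactor $j$, and the remaining $k$-sum is controlled by the time-integrated energy bounds of \textsc{Proposition}~\ref{tmzm}; this produces the right-hand side $Cj\tau=Cz$. For $z$ not a multiple of $\tau$, I would write $z=j\tau+r$ with $0\leq r<\tau$ and bound the leftover slice by $C\tau\leq Cz$ (adjusting the constant accordingly), using once more \eqref{bbj}. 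Having verified both hypotheses, \textsc{Lemma}~\ref{lbmm2} yields $B(\theta_{M})\to B(\theta)$ in $L^{1}(Q_{T})$. Because $B'=b\geq b_{\#}>0$, the function $B$ is a strictly increasing bi-Lipschitz homeomorphism of $\mathbb{R}$; extracting a further subsequence for which $B(\theta_{M})\to B(\theta)$ a.e.\ and applying $B^{-1}$, which is continuous, gives $\theta_{M}\to\theta$ a.e.\ in $Q_{T}$.

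The main obstacle I expect is not \eqref{bbj} itself, whose derivation is routine H\"older bookkeeping, but its promotion to the uniform-in-$z$ translation bound \eqref{cbb}. The single-step estimate controls $\int_{\Omega}(B(\theta^{m})-B(\theta^{m-1}))(\theta^{m}-\theta^{m-1})\mathrm{dx}$ by a product involving $\|\theta^{m}-\theta^{m-1}\|_{\ell,\Gamma}$ and $\|\theta^{m}-\theta^{m-1}\|_{2,\Omega}$, which one cannot sum naively because these increments are not themselves known to be small in an $m$-summable sense. The Alt--Luckhaus switching of summation order and the careful absorption, in the $k$-sum, of the gradient-of-difference factors produced by the test function $\theta^{m}-\theta^{m-j}$ (which now has $j$-step, not one-step, gradient norm) is the delicate point, and is where the uniform bounds of \textsc{Proposition}~\ref{tmzm} and the structure of the coupled system (bounds on $a$, $\sigma F$, $\gamma$, and on the data $h$, $g$) must all be used together.
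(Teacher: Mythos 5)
Your proposal is correct and follows essentially the same route as the paper: the lower bound comes from the monotonicity statement in Lemma~\ref{lbmm}, the upper bound from testing the discrete equation with the increment and converting the energy bounds of Proposition~\ref{tmzm} into per-step $\tau^{1/\ell}$ and $\sqrt{\tau}$ factors, and the convergences from verifying the two hypotheses of Lemma~\ref{lbmm2} via the same multi-step (telescoped) estimate with the $k$-step increment as test function, followed by strict monotonicity of $B$. The only cosmetic difference is that the paper derives the $k$-step inequality (\ref{bbjk}) for a general test function first and specializes to $k=1$ for (\ref{bbj}), whereas you treat the single-step and multi-step cases separately; the underlying computation is identical.
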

\begin{proof}
Let $k\in\mathbb{N}$. Let us sum up (\ref{td2}) for $m=j+1,\cdots,j+k$ and multiply by $\tau$, obtaining
\[
\int_\Omega (B(\theta^{j+k })-B(\theta^{j })) v \mathrm{dx} \leq 
\mathcal {I }_\Gamma^j +\mathcal{I}_\Omega^j ,
\] 
where
\begin{eqnarray*}
\mathcal{I}_\Gamma^j&:=&  \tau\sum_{m=j+1}^{j+k} \int_\Gamma | (\gamma(\theta^m)|\theta^{m}|^{\ell-2} \theta^{m} -
 h(t_{m,M}) )v| \mathrm{dx} \\
 &\leq & \int_{j\tau}^{(j+k)\tau}
\| \gamma(\theta_M)|\theta_M|^{\ell-2} \theta_M -
 h  \|_{\ell ',\Gamma } \|v \|_{\ell,\Gamma}\mathrm{dt} ; \\
\mathcal{I}_\Omega^j &:=& \tau \sum_{m=j+1}^{j+k}\int_\Omega |( a ( \theta^m,\phi^m )\nabla \theta^{m} +
  F ( \theta^m,\phi^m) \nabla\phi^m )v|  \mathrm{dx} \\
&\leq & \int_{j\tau}^{(j+k)\tau}
\| a ( \theta_M,\phi_M )\nabla \theta_M +
  F ( \theta_M,\phi_M) \nabla\phi_M \|_{2,\Omega}   
\| v\|_{2,\Omega} \mathrm{dt} . 
\end{eqnarray*}
Here, we used the H\"older inequality and the definition of $ \theta_M$ and of $\phi_M$.

Let us compute $\mathcal {I }_\Gamma^j $ and $\mathcal{I}_\Omega^j$ by  applying  the estimate (\ref{cotaul}).
Using the assumption (\ref{gamm}) and after the H\"older inequality, we deduce
\begin{equation}\label{ij1}
\mathcal{I}_\Gamma^j
 \leq \|v \|_{\ell,\Gamma} \int_{j\tau}^{(j+k)\tau}\left(
\gamma^\#\|\theta_M\|_{\ell ,\Gamma }^{\ell-1}+\| h  \|_{\ell ',\Gamma } \right)\mathrm{dt} 
\leq  \|v \|_{\ell,\Gamma} C (k\tau)^{1/\ell }.
 \end{equation}
 Using the assumptions (\ref{smm}), (\ref{isof}) and (\ref{amm}),  and after the H\"older inequality,  we deduce
\begin{equation}\label{ij2}
\mathcal{I}_\Omega^j \leq \| v\|_{2,\Omega} \int_{j\tau}^{(j+k)\tau}
\left(a^\#\| \nabla \theta_M\|_{2,\Omega}  + \sigma^\#
  F ^\#\| \nabla\phi_M \|_{2,\Omega}   \right) \mathrm{dt} \leq \| v\|_{2,\Omega}C\sqrt{k\tau}. 
\end{equation}

Hence, we find 
\begin{equation}\label{bbjk}
\int_\Omega (B(\theta^{j+k })-B(\theta^{j })) v \mathrm{dx} \leq 
 \|v \|_{\ell,\Gamma} C (k\tau)^{1/\ell } +  \| v\|_{2,\Omega}C\sqrt{k\tau} .
\end{equation}
In particular,
 the  estimate  (\ref{bbj}) follows by taking $j=m-1$, $k=1$ and $v=\theta^{m }-\theta^{m-1 }$, and applying
Lemma \ref{lbmm}.

To prove the convergences, we will apply Lemma \ref{lbmm2}.
Considering the weak convergence of $\theta_M$ established in Proposition \ref{ttphi} and the estimate (\ref{cotaul}),
in order to apply Lemma \ref{lbmm2} it remains to prove that the condition (\ref{cbb}) is fulfilled.

Let $0<z<T$ be arbitrary. Since the objective is to find convergences, it suffices to take $M>T/z$, which means $\tau<z$.
Thus, there exists $k\in\mathbb{N}$ such that $k\tau<z\leq (k+1)\tau$. Moreover, we may choose $M>k+1$ deducing
\begin{eqnarray*}
\int_0^{T-z}\int_\Omega (B(\theta_M(t+z))-B(\theta_M(t))) (\theta_M(t+z)-\theta_M(t))\mathrm{dx} \mathrm{dt}\leq \\
\leq \sum_{j=1 }^{M-k} \int_{(j-1)\tau}^{(j+k)\tau}
\int_\Omega (B(\theta^{j+k })-B(\theta^{j })) (\theta^{j+k }-\theta^{j })\mathrm{dx} .
\end{eqnarray*}

Taking $v=\theta^{j+k }-\theta^j$ in (\ref{bbjk}) and then summing up
for $j=1,\cdots, M-k$,   we find
\begin{eqnarray*}
\int_0^{T-z}\int_\Omega (B(\theta_M(t+z))-B(\theta_M(t))) (\theta_M(t+z)-\theta_M(t))\mathrm{dx} \mathrm{dt}\leq \\
\leq (k+\tau)
\sum_{j=1 }^{M-k}
\int_\Omega (B(\theta^{j+k })-B(\theta^{j })) (\theta^{j+k }-\theta^{j })\mathrm{dx} \leq \\ \leq
\sum_{j=1 }^{M-k} \int_{(j-1)\tau}^{j\tau +k }
 \left(  \|\theta^{j+k }-\theta^{j }\|_{\ell,\Gamma} C (k\tau)^{1/\ell } + 
  \| \theta^{j+k }-\theta^{j }\|_{2,\Omega}C\sqrt{k\tau} \right) \mathrm{dt} .
\end{eqnarray*}
Arguing as in (\ref{ij1}) and (\ref{ij2}), we conclude
\begin{eqnarray*}
\int_0^{T-z}\int_\Omega (B(\theta_M(t+z))-B(\theta_M(t))) (\theta_M(t+z)-\theta_M(t))\mathrm{dx} \mathrm{dt}\leq \\ \leq
 C\left( (k\tau)^{1/\ell }(k\tau+\tau)^{ 1/\ell'}  +  (k\tau)^{1/2} (k\tau+\tau )^{1/2}\right)=C 
 \left( 2^{ 1/\ell'}+ 2^{ 1/2}\right) z.
\end{eqnarray*}
Thus,
all hypothesis of  Lemma \ref{lbmm2} are fulfilled.
Therefore, Lemma \ref{lbmm2} assures that $B(\theta_M)$ strongly converges to $B(\theta)$ in $L^1(Q_T)$. Consequently, up to a subsequence,
$B(\theta_M)$  converges to $B(\theta)$ a.e. in $Q_T$. Since $B$ is strictly monotone, 
$\theta_M$  converges to $\theta$ a.e. in $Q_T$ (see, for instance, \cite{lions}).
\end{proof}

  Now we are able to identify the limit $Z$.
\begin{proposition}\label{duuw}
The limit $Z$ satisfies 
\[
Z=\partial_t (B(\theta)) \quad\mbox{ in } L^{\ell ' }(0,T;(V_\ell(\Omega))').
\]
\end{proposition}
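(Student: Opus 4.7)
The plan is to identify $Z$ with $\partial_t B(\theta)$ by exploiting that $Z_M$ is, by construction, precisely the distributional time derivative of the Rothe affine interpolant $\widetilde{B}(\theta_M)$, and then pass to the limit in an integration-by-parts identity. First, I would observe that by Definition \ref{defro}, on each time interval $I_{m,M}$ the piecewise-linear function $\widetilde{B}(\theta_M)$ has the constant time derivative $(B(\theta^m) - B(\theta^{m-1}))/\tau = Z^m = Z_M(t)$. Hence $\partial_t \widetilde{B}(\theta_M) = Z_M$ in the distributional sense on $(0,T)$, and Proposition \ref{ttphi} yields $\partial_t \widetilde{B}(\theta_M) \rightharpoonup Z$ in $L^{\ell'}(0,T;(V_\ell(\Omega))')$.

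The key technical step is to prove $\widetilde{B}(\theta_M) \to B(\theta)$ strongly in $L^1(Q_T)$. On $I_{m,M}$ one has
\[
\widetilde{B}(\theta_M)(t) - B(\theta_M)(t) = \frac{t - t_{m,M}}{\tau}\bigl(B(\theta^m) - B(\theta^{m-1})\bigr),
\]
so, using the Lipschitz bound $|B(u) - B(v)| \leq b^\# |u-v|$ from (\ref{bmm}) and computing $\int_{I_{m,M}} |t - t_{m,M}|/\tau\, \mathrm{dt} = \tau/2$,
\[
\|\widetilde{B}(\theta_M) - B(\theta_M)\|_{L^1(Q_T)} \leq \tfrac{b^\# |\Omega|^{1/2}}{2}\, \tau \sum_{m=1}^M \|\theta^m - \theta^{m-1}\|_{2,\Omega}.
\]
Solving the quadratic inequality (\ref{bbj}) yields
\[
\|\theta^m - \theta^{m-1}\|_{2,\Omega} \leq C\bigl(\sqrt{\tau} + \|\theta^m - \theta^{m-1}\|_{\ell,\Gamma}^{1/2}\, \tau^{1/(2\ell)}\bigr).
\]
Summing in $m$, using $M\tau = T$ for the first term, and applying H\"older's inequality together with the uniform bound $\tau \sum_m \|\theta^m\|_{\ell,\Gamma}^\ell \leq C$ extracted from (\ref{cotaul}) for the second term, I would obtain $\|\widetilde{B}(\theta_M) - B(\theta_M)\|_{L^1(Q_T)} = O(\tau^{1/(2\ell)}) \to 0$. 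Combined with the strong convergence $B(\theta_M) \to B(\theta)$ in $L^1(Q_T)$ from Proposition \ref{pbttm}, this gives $\widetilde{B}(\theta_M) \to B(\theta)$ in $L^1(Q_T)$.

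Finally, for every $v \in V_\ell(\Omega)$ and every test function $\psi \in C_c^\infty(0,T)$, the integration-by-parts identity
\[
\int_0^T \langle \partial_t \widetilde{B}(\theta_M), v\rangle \psi(t)\, \mathrm{dt} = -\int_{Q_T} \widetilde{B}(\theta_M)\, v\, \psi'(t)\, \mathrm{dx}\, \mathrm{dt}
\]
holds. Letting $M \to \infty$ using the two convergences above, I recover
\[
\int_0^T \langle Z, v\rangle \psi(t)\, \mathrm{dt} = -\int_{Q_T} B(\theta)\, v\, \psi'(t)\, \mathrm{dx}\, \mathrm{dt},
\]
which identifies $Z = \partial_t B(\theta)$ in the distributional sense, and thus in $L^{\ell'}(0,T;(V_\ell(\Omega))')$ by uniqueness of weak limits. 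The main obstacle is the strong convergence $\widetilde{B}(\theta_M) - B(\theta_M) \to 0$: bounding the telescoping sum of jumps requires carefully extracting a positive power of $\tau$ from the square-root term in (\ref{bbj}) by combining it with the boundary $L^\ell$-estimate, which is the delicate quantitative ingredient of the argument.
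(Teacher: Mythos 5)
Your proposal is correct and follows essentially the same route as the paper: both identify $Z$ through the relation between $Z_M$ and the affine interpolant $\widetilde{B}(\theta_M)$, control $\widetilde{B}(\theta_M)-B(\theta_M)$ via the estimate (\ref{bbj}) combined with the uniform bounds of Proposition \ref{tmzm}, and conclude using the strong convergence $B(\theta_M)\to B(\theta)$ of Proposition \ref{pbttm} together with uniqueness of limits. The only cosmetic differences are that the paper passes to the limit in the time-integrated identity $\int_0^t Z_M(\varsigma)\,\mathrm{d\varsigma}=\widetilde{B}(\theta_M(t))-B(\theta^0)$ and bounds the interpolation error in $L^2(Q_T)$, whereas you test against $\psi\in C_c^\infty(0,T)$ and bound the error in $L^1(Q_T)$ after solving the quadratic inequality (\ref{bbj}).
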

\begin{proof} 
 For a fixed $t$, there exists
$m\in\{1,\cdots,M\}$ such that  $t\in]t_{m-1,M},t_{m,M}]$.
From definition \ref{defzm}  we have
\begin{eqnarray*}
\int^t_0  {Z}_M(\varsigma)\mathrm{d\varsigma}=\sum_{j=1}^{m-1}\int_{(j-1)\tau}^{j\tau}
\frac{B(\theta^{j})(\varsigma)-B(\theta^{j-1})(\varsigma)}{ \tau}\mathrm{d\varsigma} + \\
+\int_{(m-1)\tau}^{t}
\frac{ B(\theta^m) (\varsigma)- B(\theta^{m-1})(\varsigma) }{\tau}\mathrm{d\varsigma} =\\
=B(\theta^{m-1})- B(\theta^{0}) + \frac{ t-(m-1)\tau }{\tau}
\left( B(\theta^m) - B(\theta^{m-1}) \right) = \widetilde{B}(\theta_M(t)) - B(\theta^{0}) 
\end{eqnarray*}
 in $\Omega$.
By the Riesz theorem, the  bounded linear functional 
$v\in L^2(\Omega)\mapsto \int^t_0  ({Z}_M(\varsigma),v)\mathrm{d\varsigma}$ is
 (uniquely)  representable by the element $\widetilde{B}(\theta_M(t)) - B(\theta^{0}) $ from $L^2(\Omega).$
Using the corresponding definitions we compute
\begin{eqnarray*}
\int_0^T\|\widetilde{B}(\theta_M(t)) - B(\theta_M) \|_{2,\Omega}^2\mathrm{dt} =
\sum_{m=1}^M \|Z^m \|_{2,\Omega}^2\int_{(m-1)\tau}^{m\tau} (t-m\tau)^2\mathrm{dt}= \\
=\frac{\tau^3}{3}\sum_{m=1}^M \|Z^m \|_{2,\Omega}^2 =
\frac{\tau}{3}
\sum_{m=1}^M\|B(\theta^m) - B(\theta^{m-1}) \|_{2,\Omega}^2\leq \\
\leq (b^\#)^2 \frac{\tau}{3}
\sum_{m=1}^M\|\theta^m - \theta^{m-1} \|_{2,\Omega}^2.
\end{eqnarray*}
Applying (\ref{bbj}) and Proposition \ref{tmzm} we have
\[
\int_0^T\|\widetilde{B}(\theta_M(t)) - B(\theta_M) \|_{2,\Omega}^2\mathrm{dt}\leq C 
\left( \tau^{1/\ell +1/\ell'} +\tau^{1/2+1/2} \right) = C\tau,
\]
and consequently $\widetilde{B}(\theta_M)$ converges to $ B(\theta) $. 
By the uniqueness of limit, we deduce 
\[
\int^t_0  Z(\varsigma)\mathrm{d\varsigma}=B(\theta) - B(\theta^{0}),
\]
which concludes the proof.
\end{proof}

We emphasize that the above convergences are sufficient to identify the limit $\phi$ as stated in the following proposition,
but they are not sufficient to identify the temperature $\theta$ as a solution, because on the one hand the apparent
nonlinearity of the coefficients destroy the weak convergence, 
on the other hand, the weak-weak convergence does not imply weak convergence.
\begin{corollary}\label{pphi}
Let  $(\theta,\phi)$ be in accordance with Propositions \ref{ttphi} and \ref{pbttm}, then they verify (\ref{wvfpha}).
\end{corollary}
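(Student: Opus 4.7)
The plan is to pass to the limit $M\to\infty$ in the time-discretized elliptic identity (\ref{pbphiM}), which holds a.e.\ in $]0,T[$, and obtain (\ref{wvfpha}) in the sense of distributions in time; from there, density of test pairs gives the a.e.\ identity. Since (\ref{wvfpha}) is linear in $\phi$ and $\nabla\theta$, and the nonlinear coefficients $\sigma(\theta_M)$ and $\sigma(\theta_M)\alpha_\mathrm{S}(\theta_M)$ depend only on $\theta_M$, the work is a combination of strong/weak products rather than any Minty-type argument.

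First, fix $w\in V$ and $\eta\in\mathcal D(0,T)$ and multiply (\ref{pbphiM}) by $\eta(t)$, integrating over $]0,T[$:
\begin{eqnarray*}
\int_0^T\!\!\int_\Omega \eta(t)\sigma(\theta_M)\nabla\phi_M\cdot\nabla w\,\mathrm{dx}\mathrm{dt}
+\int_0^T\!\!\int_\Omega \eta(t)\sigma(\theta_M)\alpha_\mathrm{S}(\theta_M)\nabla\theta_M\cdot\nabla w\,\mathrm{dx}\mathrm{dt}\\
=\int_0^T\!\!\int_{\Gamma_\mathrm{N}} \eta(t) g w\,\mathrm{ds}\,\mathrm{dt}.
\end{eqnarray*}
The right-hand side is independent of $M$. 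For the left-hand side, the key input is that, by Proposition \ref{pbttm}, $\theta_M\to\theta$ a.e.\ in $Q_T$. Combined with the continuity of $\sigma$ and $\alpha_\mathrm{S}$ in their second argument (Carath\'eodory) and their uniform bounds (\ref{smm})--(\ref{amax}), the Lebesgue dominated convergence theorem yields
\[
\eta(t)\sigma(\theta_M)\nabla w \longrightarrow \eta(t)\sigma(\theta)\nabla w,\qquad
\eta(t)\sigma(\theta_M)\alpha_\mathrm{S}(\theta_M)\nabla w \longrightarrow \eta(t)\sigma(\theta)\alpha_\mathrm{S}(\theta)\nabla w
\]
strongly in $\mathbf L^2(Q_T)$. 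Paired with the weak convergences $\nabla\phi_M\rightharpoonup\nabla\phi$ and $\nabla\theta_M\rightharpoonup\nabla\theta$ in $\mathbf L^2(Q_T)$ from Proposition \ref{ttphi}, the classical strong$\times$weak limit rule allows the passage to the limit in each term.

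Hence the identity
\[
\int_0^T \eta(t)\Big[\int_\Omega \sigma(\theta)\nabla\phi\cdot\nabla w\,\mathrm{dx}
+\int_\Omega \sigma(\theta)\alpha_\mathrm{S}(\theta)\nabla\theta\cdot\nabla w\,\mathrm{dx}
-\int_{\Gamma_\mathrm{N}} g w\,\mathrm{ds}\Big]\mathrm{dt}=0
\]
holds for every $\eta\in\mathcal D(0,T)$ and every $w\in V$. By the fundamental lemma of the calculus of variations, the bracketed function of $t$ vanishes a.e.\ in $]0,T[$ for each fixed $w\in V$; separability of $V$ lets one choose a countable dense subset and take the corresponding null sets' union of measure zero, yielding (\ref{wvfpha}) a.e.\ in $]0,T[$.

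The only delicate step is in fact hidden in Proposition \ref{pbttm}: without pointwise a.e.\ convergence of $\theta_M$, the nonlinear coefficient $\sigma(\theta_M)$ would not pass to the limit, and weak$\times$weak products would block the argument. All other ingredients are direct consequences of the uniform bounds on $\sigma,\alpha_\mathrm{S}$ and standard linear-equation passages to the limit, so no additional obstacle is anticipated in this corollary.
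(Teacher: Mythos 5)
Your proposal is correct and follows essentially the same route as the paper: a.e.\ convergence of $\theta_M$ (Proposition \ref{pbttm}) plus the Krasnoselski/dominated-convergence argument gives strong convergence of the Nemytskii coefficients, which paired with the weak convergences of $\nabla\phi_M$ and $\nabla\theta_M$ from Proposition \ref{ttphi} yields the weak limits of the products $\sigma(\theta_M)\nabla\phi_M$ and $\sigma(\theta_M)\alpha_\mathrm{S}(\theta_M)\nabla\theta_M$. The only difference is that you spell out the recovery of the a.e.-in-time identity via test functions $\eta\in\mathcal D(0,T)$ and separability of $V$, a step the paper leaves implicit.
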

\begin{proof}
Let $(\theta_M,\phi_M)$ solve (\ref{tdM})-(\ref{pbphiM}). Applying Propositions \ref{ttphi} and \ref{pbttm}, and
 the Krasnoselski theorem to the Nemytskii operators $\sigma$ and $\alpha_\mathrm{S}$, we have
\begin{eqnarray}\label{phim1}
 \sigma ( \theta_M )\nabla \phi_M\rightharpoonup \sigma ( \theta )\nabla \phi &\mbox{ in }& \mathbf{L}^2(Q_T); \\
  \sigma ( \theta_M ) \alpha_\mathrm{S}(  \theta_M ) \nabla \theta_M\rightharpoonup 
   \sigma ( \theta ) \alpha_\mathrm{S}(  \theta ) \nabla \theta &\mbox{ in }& \mathbf{L}^2(Q_T) \quad
\mbox{ as }   M\rightarrow +\infty .\label{phim2}
\end{eqnarray}
Thus,   we may pass to the limit 
in (\ref{pbphiM}) as $M$ tends to infinity, concluding that $(\theta,\phi)$ verifies (\ref{wvfpha}).
\end{proof}

\subsection{Proof of Theorem \ref{abst2}}
\label{sabst2}
\

This proof follows \textit{mutatis mutandis} the structure of the proof of Theorem \ref{abst1} (cf. Subsection 
\ref{sabst1}). We only sketch its main steps.
\begin{enumerate}
\item The uniform  estimates to $   {\theta}_M $ and $   {\phi}_M $ are as follows.
The quantitative estimate (\ref{cotaul}) reads
\begin{eqnarray*}
 \int_\Omega \Psi( \theta^m )\mathrm{dx}+ a_\# \| \nabla \theta_M\|_{2,Q_T} ^2 
+\frac{\gamma_\#}{\ell '} \|  \theta_M\|_{\ell,\Sigma_T } ^\ell \leq
\nonumber\\
\leq b^\#  \| \theta^0\|_{2,\Omega} ^2 +
\frac{1}{\ell '\gamma_{\#}^{1/(\ell-1)} } \| h \|_{\ell',\Sigma_T }^{\ell '} +
T \frac{K_2^2(P_2+1)^2}{2 (L_{2})_\#} \|g\|_{2,\Gamma_\mathrm{N}}^2 ,
 \end{eqnarray*}
by using the argument to estimate  (\ref{cotatt}). Namely, (\ref{eqbb}) reads
\begin{eqnarray*}
\frac{1}{\tau}
\int_\Omega (B(\theta^m)-B(\theta^{m-1}) )\theta^m\mathrm{dx} + 
a_\# \|\nabla \theta^{m} \|_{2,\Omega}^2 
+
\int_\Gamma \gamma(\theta^{m})|\theta^{m}|^{\ell}  \mathrm{ds} \leq \\
\leq \mathcal{R}(0, \| h(t_{m,M}) \|_{\ell ',\Gamma}^{\ell '} )
+\frac{1}{\ell} 
\int_\Gamma \gamma(\theta^{m})|\theta^{m}|^{\ell}  \mathrm{ds},
\end{eqnarray*}
where $\mathcal{R}$ is the increasing continuous function defined in (\ref{cotatphi}), with
 $(L_2)_\#=a_\#\sigma_\#/ (2 F^\#\sigma^\#)$. In addition, summing the quantitative estimate 
 \[
 \sqrt{\sigma_\#} \|\nabla\phi ^{m} \|_{2,\Omega} \leq\sqrt{\sigma_\#}a^\# \|\nabla\theta ^{m-1} \|_{2,\Omega}+
  \frac{K_2(P_2+1)}{  \sqrt{\sigma_\#} } \|g\|_{2,\Gamma_\mathrm{N}},
 \]
it results in
 \[
 \| \nabla \phi_M\|_{2,Q_T}  \leq a^\# \|\nabla\theta_M \|_{2,Q_T}+ T
  \frac{K_2(P_2+1)}{  \sigma_\#} \|g\|_{2,\Gamma_\mathrm{N}} .
\]
\item For subsequences of  $   {\theta}_M $ and $   {\phi}_M $,
the weak convergences hold according to Propositions \ref{ttphi} and \ref{pbttm}, which guarantee the required result.
\end{enumerate}

\section{Existence of solutions to the TE problem}

The objective is the passage to the limit in the abstract boundary value problems
introduced in Section \ref{sabst} as the time step goes to zero ($M\rightarrow +\infty$),
with the coefficients being  defined by
\begin{eqnarray*}
b(\cdot, v) &=& \rho(\cdot,v)c_\mathrm{v}(\cdot, v);\\
a(\cdot, v,w)&=&k(\cdot, v) + T_\mathcal{M}(w)\alpha_\mathrm{S}(\cdot, v)\sigma (\cdot, v);\\
F(\cdot, v,w)&=&\Pi(\cdot, v)+T_\mathcal{M}(w),
\end{eqnarray*}
where $T_\mathcal{M}$ is the $\mathcal{M}$-truncation function defined by $T_\mathcal{M}(z)=
\max(-\mathcal{M},\min(\mathcal{M},z))$. By the definition of truncated functions, we choose 
\begin{eqnarray*}
a_\# &=& k_\#-\mathcal{M}\alpha^\#\sigma^\#;\\
F^\#  &=& \Pi^\#+\mathcal{M}.
\end{eqnarray*}
taking (\ref{akM}) into account. Under these choices, the assumptions (\ref{sss1}), (\ref{sss2}) and (\ref{sss3})
imply (\ref{afg}), (\ref{sfg}) and (\ref{asfg}), respectively. 

Let us foccus the present proof in accordance with the approximated solutions that are established in Theorem
\ref{abst1}. Analogous argument is valid for 
the approximated solutions that are established in Theorem \ref{abst2}.

Let us redefine the electrical current density  as
\[ 
\mathbf{j}( \theta , \phi)= \sigma ( \theta )\nabla \phi + 
   \sigma ( \theta ) \alpha_\mathrm{S}(  \theta ) \nabla \theta .
\] 
Analogously for $\mathbf{j}_M= \mathbf{j}(\theta_M ,\phi_M)$  or simply $\mathbf{j}_M$ and
 $\mathbf{j}$ whenever the meaning is not ambiguous.
 
Let  $ (\theta_M,  {\phi}_M )$ solve (\ref{tdM})-(\ref{pbphiM}), which may be rewritten as
 \begin{eqnarray}
\int_0^T\int_\Omega  {Z}_M v\mathrm{dx}\mathrm{dt} +
\int_{Q_T}  k ( {\theta}_M  )\nabla  {\theta}_M \cdot\nabla v\mathrm{dx}\mathrm{dt}
+\int_{\Sigma_T} \gamma( {\theta}_M )| {\theta}_M |^{\ell-2}  {\theta}_M  v \mathrm{ds}\mathrm{dt}  +\nonumber \\
  +
\int_{Q_T} \sigma ( \theta_M ) \Pi (  \theta_M ) \nabla   {\phi}_M  \cdot\nabla v\mathrm{dx}\mathrm{dt}+
\int_{Q_T}   \phi_M \mathbf{j}(  \theta_M ,   \phi_M  ) \cdot\nabla v\mathrm{dx}\mathrm{dt} =\nonumber \\
= 
\int_{\Sigma_T}  h_M v  \mathrm{ds}\mathrm{dt}  ;\qquad \label{tdMj}\\
\int_\Omega 
\mathbf{j}( \theta_M , \phi_M)\cdot \nabla w\mathrm{dx}=\int_{\Gamma_{\rm N}}g w\mathrm{ds}, \qquad \label{phiMj}
\end{eqnarray}
for every $v\in V_{\ell}(Q_T)$ and $w\in V$. 
There exist $\theta,\phi: Q_T\rightarrow\mathbb{R}$ and subsequences of  $(\theta_M,\phi_M)$,
still labelled by  $(\theta_M,\phi_M)$,  weakly convergent in accordance with Proposition \ref{ttphi}.
By Corollary \ref{pphi}, $(\theta,\phi)$ verify the electric equality
\begin{equation}\label{ee}\\
\int_\Omega \mathbf{j}( \theta , \phi)\cdot \nabla w\mathrm{dx}=
\int_{\Gamma_{\rm N}}g w\mathrm{ds}, 
\end{equation}
for every  $w\in V$.

We emphasize that the  weak convergence of
$\mathbf{j}_M$ to $ \mathbf{j}$ in $ \mathbf{L}^2(Q_T)$, taking  (\ref{phim1})-(\ref{phim2}) into account,
is not sufficient to pass to the limit the term $\phi_M \mathbf{j}(  \theta_M ,   \phi_M  ) $.
Moreover, the non smoothness of the coefficients destroy the possibility of obtaining strong convergences of 
$\nabla\theta_M$ and of $\phi_M$.

Thanks to Proposition \ref{pbttm}, we have a.e. pointwise convergence for a subsequence of $\theta_M$,
which we still denote by $\theta_M$.
Considering the assumptions (\ref{kmm})-(\ref{pmax}), the Nemytskii operators are 
continuous due to the Krasnoselski theorem, and applying the Lebesgue dominated convergence theorem,
we obtain
\begin{eqnarray*}
k(  \theta_M )\nabla v\rightarrow k(\theta) \nabla v &\mbox{ in }& \mathbf{L}^2(Q_T); \\
\sigma ( \theta_M )   \alpha_\mathrm{S}(  \theta_M  ) \nabla v\rightarrow
   \sigma ( \theta )  \alpha_\mathrm{S}(  \theta ) \nabla v &\mbox{ in }& \mathbf{L}^2(Q_T); \\
  \sigma ( \theta_M ) \Pi (  \theta_M  ) \nabla v\rightarrow 
   \sigma ( \theta )   \Pi (  \theta ) \nabla v &\mbox{ in }& \mathbf{L}^2(Q_T).
\end{eqnarray*}

Applying (\ref{cotaul}) to the following estimates
\begin{eqnarray*}
 \| T_\mathcal{M}(\phi_M) \nabla \theta_M\|_{2,Q_T}
  \leq \mathcal{M}  \| \nabla \theta_M\|_{2,Q_T} ; \\
 \|  \sigma ( \theta_M ) T_\mathcal{M}(  \phi_M  ) \nabla \phi_M\|_{2,Q_T}\leq
 \sigma^\# \mathcal{M}  \| \nabla \phi_M\|_{2,Q_T} ;\\
 \|  \gamma ( \theta_M ) | \theta_M|^{\ell-2} \theta_M\|_{\ell ',\Sigma_T } \leq \gamma^\# \|  \theta_M\|_{\ell,\Sigma_T } .
\end{eqnarray*}
there exist $\Lambda_1 , \Lambda_2 \in \mathbf{L}^2(Q_T)$
 and $\Lambda_3\in L^{\ell '}(\Sigma_T ) $ such that
\begin{eqnarray*}
  T_\mathcal{M}(  \phi_M  )\nabla \theta_M\rightharpoonup \Lambda_1 &\mbox{ in }& \mathbf{L}^2(Q_T); \\
  \sigma ( \theta_M ) T_\mathcal{M}(  \phi_M  ) \nabla \phi_M\rightharpoonup \Lambda_2 &\mbox{ in }& \mathbf{L}^2(Q_T); \\
 \gamma ( \theta_M ) | \theta_M|^{\ell-2} \theta_M\rightharpoonup \Lambda_3 &\mbox{ in }& L^{\ell '}(\Sigma_T ) .
\end{eqnarray*}
Thus,   we may pass to the limit
in (\ref{tdMj}) as $M$ tends to infinity, concluding that $(\theta,\phi)$ verifies
 \begin{eqnarray}
\int_0^T\int_\Omega  Z v\mathrm{dx}\mathrm{dt} +
\int_{Q_T}  k (\theta)\nabla\theta \cdot \nabla v\mathrm{dx}\mathrm{dt}
 + \nonumber \\ +\int_{Q_T} \sigma(\theta) \Pi (\theta)
\nabla\phi \cdot\nabla v\mathrm{dx} \mathrm{dt}+ 
\int_{Q_T} \sigma ( \theta )  \alpha_\mathrm{S}(\theta ) \Lambda_1 \cdot\nabla v\mathrm{dx}\mathrm{dt} +\nonumber \\
  +
\int_{Q_T} \Lambda_2  \cdot\nabla v\mathrm{dx}\mathrm{dt}+\int_{\Sigma_T}  \Lambda_3 v \mathrm{ds}\mathrm{dt} 
= 
\int_{\Sigma_T}  h v  \mathrm{ds}\mathrm{dt} ,\quad \forall v \in V_\ell (Q_T) . \label{wL123}
\end{eqnarray}

To identify the temperature $\theta$ as a solution, 
we need to identify $\Lambda_1 , \Lambda_2 \in \mathbf{L}^2(Q_T)$  and $\Lambda_3\in L^{\ell '}(\Sigma_T ) $.

To prove that $\Lambda_1=  T_\mathcal{M}(  \phi  )\nabla \theta$, let us consider the Green formula
\[
\int_{Q_T}  T_\mathcal{M}(  \phi_M  )\nabla \theta_M \cdot \mathbf{v}\mathrm{dx}\mathrm{dt}=-
\int_{Q_T[|\phi_M|<\mathcal{M}]} \theta_M  \nabla  \phi_M  \cdot \mathbf{v}\mathrm{dx}\mathrm{dt},
\]
for every $\mathbf{v}\in \mathbf{L}^p(0,T; \mathbf{W}^{1,p}(\Omega))$ such that $\nabla\cdot \mathbf{v}=0$ in $Q_T$
and $\mathbf{v}\cdot \mathbf{n}=0$ in $\partial\Omega\times ]0,T [$. Next we choose the exponent $p>1$
to ensure the meaning of the involved terms. By $\theta_M\in L^\infty (0,T; L^2(\Omega))\cap
L^2(0,T; H^1(\Omega ))$ and $ H^1(\Omega )\hookrightarrow L^{2^*}(\Omega)$ with $2^*$ being the critical Sobolev
exponent, \textit{i.e.} $2^*=2n/(n-2)$ if $n>2$ and any $2^*>1$ if $n=2$, making recourse to the interpolation
with exponents being
\[
\frac{\beta}{2}= \frac{1-\beta}{2}+\frac{\beta}{2^*}=\frac{1}{q}, \qquad (0<\beta <1),
\]
then $\theta_M$ converges to $\theta$ in $L^q(Q_T)$ for every $q<2(n+2)/n$. In particular,
we take $p>n+2$ such that
\[
\frac{1}{2}=\frac{1}{p}+\frac{1}{q} >\frac{1}{p}+\frac{n}{2(n+2)}.
\]
Consequently, we have that  $\theta_M\mathbf{v}$ converges to $\theta\mathbf{v}$ in $\mathbf{L}^2 (Q_T)$.
Therefore,  the uniqueness of the weak limit implies that $\Lambda_1=  T_\mathcal{M}(  \phi  )\nabla \theta$.
In particular, we find 
\[
\int_{Q_T} a(\theta_M,\phi_M)\nabla \theta_M\cdot\nabla v\mathrm{dx}\mathrm{dt}
\build\longrightarrow_{M\rightarrow\infty} \int_{Q_T} a(\theta,\phi) \nabla \theta\cdot\nabla v\mathrm{dx}\mathrm{dt},
 \]
 and consequently (\ref{wL123}) reads
 \begin{eqnarray}
\int_0^T\int_\Omega  Z v\mathrm{dx}\mathrm{dt} +
\int_{Q_T}  a(\theta,\phi)\nabla\theta \cdot \nabla v\mathrm{dx}\mathrm{dt}
  +\int_{Q_T}  \sigma(\theta) \Pi (\theta)
\nabla\phi \cdot\nabla v\mathrm{dx} \mathrm{dt}+ \nonumber \\
  +
\int_{Q_T} \Lambda_2  \cdot\nabla v\mathrm{dx}\mathrm{dt}+\int_{\Sigma_T}  \Lambda_3 v \mathrm{ds}\mathrm{dt} 
= 
\int_{\Sigma_T}  h v  \mathrm{ds}\mathrm{dt} ,\quad \forall v \in V_\ell (Q_T) . \qquad \label{wL23}
\end{eqnarray}

Now, we are in the conditions to identify the limits $\Lambda_2$ and $\Lambda_3$
by making recourse to the Minty argument as follows. 
We rephrase (\ref{jmm}) as
 \begin{eqnarray*}
 \mathcal{J}_M -\mathcal{J}_1^M-\mathcal{J}_2^M -\mathcal{J}_3^M -\mathcal{J}_4^M -\mathcal{J}_5^M + \\
+\int_{\Sigma_T} \left(\gamma(\theta_M) |\theta_{M}|^{\ell-2} \theta_{M} -\gamma(v) |v|^{\ell-2}v\right) 
(\theta_{M} -v)\mathrm{ds}\mathrm{dt} \geq  \\ \geq
(L_1)_\# \int_{Q_T}|\nabla (\theta_M-v) |^2\mathrm{dx} \mathrm{dt}+
(L_2)_\#\int_{Q_T}|\nabla (\phi_M - \phi ) |^2\mathrm{dx}\mathrm{dt}\geq 0,
\end{eqnarray*}
where
 \begin{eqnarray*}
\mathcal{J}_M &:=& \int_{Q_T}
\left( a(\theta_M,\phi_M) |\nabla \theta_M|^2 +\sigma (\theta_M) F (\theta_M,	\phi_M) \nabla \phi_M\cdot \nabla \theta_M
\right)\mathrm{dx}\mathrm{dt} + \\ &&+\int_{Q_T}
\left(\sigma (\theta_M) |\nabla \phi_M|^2 + \sigma (\theta_M)  \alpha_\mathrm{S} (\theta_M) \nabla \theta_M\cdot \nabla \phi_M
\right)\mathrm{dx} \mathrm{dt}+ \\ &&
+\int_{\Sigma_T}\gamma(\theta_M) |\theta_M|^\ell\mathrm{ds}\mathrm{dt}; \\
\mathcal{J}_1^M &:=& 2
\int_{Q_T} a(\theta_M,\phi_M)\nabla \theta_M\cdot\nabla v\mathrm{dx}\mathrm{dt}-
\int_{Q_T} a(\theta_M,\phi_M)|\nabla v|^2\mathrm{dx}\mathrm{dt}; \\
\mathcal{J}_2^M &:=& 2
\int_{Q_T}\sigma(\theta_M)\nabla \phi_M\cdot\nabla \phi \mathrm{dx}\mathrm{dt} -
\int_{Q_T}\sigma(\theta_M) |\nabla \phi |^2\mathrm{dx}\mathrm{dt} ;\\
\mathcal{J}_3^M &:=&
\int_{Q_T} \sigma (\theta_M) \Pi (\theta_M) \Big(\nabla \phi_M \cdot\nabla v +
\nabla \phi \cdot\nabla( \theta_M-v) \Big) \mathrm{dx} \mathrm{dt} ; \\
\mathcal{J}_4^M &:=&
\int_{Q_T} \sigma (\theta_M) T_\mathcal{M} (\phi_M) \Big(\nabla \phi_M \cdot\nabla v +
\nabla \phi  \cdot\nabla( \theta_M-v) \Big) \mathrm{dx} \mathrm{dt} ; \\
\mathcal{J}_5^M &:=&
\int_{Q_T} \sigma (\theta_M)  \alpha_\mathrm{S} (\theta_M)\Big( \nabla \theta_M \cdot\nabla \phi  +
\nabla v\cdot\nabla(\phi_M - \phi ) \Big) \mathrm{dx} \mathrm{dt}.
\end{eqnarray*}

Considering the convergences
 \begin{eqnarray*}
 \mathcal{J}_1^M & \build\longrightarrow_{M\rightarrow\infty} & 2
\int_{Q_T} a(\theta,\phi)\nabla \theta\cdot \nabla v\mathrm{dx}\mathrm{dt} -
\int_{Q_T} a(\theta,\phi) |\nabla v|^2\mathrm{dx} \mathrm{dt}:= \mathcal{J}_1;\\
 \mathcal{J}_2^M & \build\longrightarrow_{M\rightarrow\infty} &
\int_{Q_T}\sigma(\theta) |\nabla \phi |^2\mathrm{dx} \mathrm{dt} := \mathcal{J}_2; \\
 \mathcal{J}_3^M & \build\longrightarrow_{M\rightarrow\infty} &
\int_{Q_T}\sigma(\theta)  \Pi (\theta)  \nabla \phi \cdot\nabla \theta  \mathrm{dx} \mathrm{dt} := \mathcal{J}_3;\\
 \mathcal{J}_4^M & \build\longrightarrow_{M\rightarrow\infty} &
\int_{Q_T} \Lambda_2   \cdot\nabla v\mathrm{dx} \mathrm{dt} +
\int_{Q_T}\sigma(\theta)T_\mathcal{M}(\phi)\nabla \phi  \cdot\nabla( \theta-v)\mathrm{dx} \mathrm{dt} := \mathcal{J}_4;\\
 \mathcal{J}_5^M & \build\longrightarrow_{M\rightarrow\infty} &
\int_{Q_T}\sigma(\theta)  \alpha_\mathrm{S} (\theta)\nabla \theta \cdot\nabla \phi \mathrm{dx}\mathrm{dt} 
:= \mathcal{J}_5,
\end{eqnarray*}
  we deduce
 \begin{eqnarray*}
\lim_{ M\rightarrow \infty}\mathcal{J}_M\geq
\int_{\Sigma_T}\Lambda_3 v\mathrm{ds} \mathrm{dt}+
\int_{\Sigma_T}\gamma(v)|v|^{\ell-2} v(\theta -v)\mathrm{ds}\mathrm{dt}+ \\ + \mathcal{J}_1+
\mathcal{J}_2+ \mathcal{J}_3 +\mathcal{J}_4 +\mathcal{J}_5.
\end{eqnarray*}
We continue the Minty argument by taking in (\ref{tdMj}) and (\ref{phiMj}), respectively, the test function $v=\theta_M$ and
  the test function $w=\phi_M$, in (\ref{wL23}) the test function $v=\theta$,
 and in (\ref{ee}) the test function $w=\phi$, we deduce
 \begin{eqnarray*}
\lim_{M\rightarrow \infty}\mathcal{J}_M = -
\int_{Q_T} Z\theta \mathrm{dx} \mathrm{dt}+\int_{\Sigma_T}  h\theta \mathrm{ds} \mathrm{dt}
+\int_0^T \int_{\Gamma_\mathrm{N}}  g \theta \mathrm{ds}\mathrm{dt}
=\\
=\int_{Q_T} a( \theta,\phi ) |\nabla \theta |^2\mathrm{dx}\mathrm{dt}+
\int_{Q_T} \sigma (\theta) \Pi (\theta)
\nabla\phi \cdot\nabla \theta\mathrm{dx} \mathrm{dt} + \\ +\int_{Q_T} \Lambda_2\cdot\nabla \theta\mathrm{dx}\mathrm{dt}
+\int_{\Sigma_T} \Lambda_3 \theta \mathrm{ds}\mathrm{dt}+
 \int_{Q_T} \mathbf{j}( \theta , \phi) \cdot \nabla \phi\mathrm{dx} \mathrm{dt}.
\end{eqnarray*}

Gathering the above two relations, we find
 \begin{eqnarray}
\int_{Q_T} a( \theta,\phi )|\nabla (\theta- v)|^2\mathrm{dx} \mathrm{dt}+
\int_{Q_T} \Big(\Lambda_2 -\sigma(\theta)T_\mathcal{M}(\phi)\nabla \phi \Big)
 \cdot\nabla( \theta-v)\mathrm{dx} \mathrm{dt} + \nonumber\\+
\int_{\Sigma_T} \left(\Lambda_3 -\gamma(v)|v|^{\ell-2}v \right) (\theta-v) \mathrm{ds}\mathrm{dt} \geq 0.\label{minty}
\end{eqnarray} 
Next, taking $v=\theta-\delta \varphi$, with $\varphi\in\mathcal{D}(Q_T)$,  and after
  dividing by $\delta>0$, we arrive to
 \begin{eqnarray*}
\delta\int_{Q_T} a( \theta,\phi )|\nabla \varphi|^2\mathrm{dx} \mathrm{dt}+
\int_{Q_T} \Big(\Lambda_2 -\sigma(\theta)T_\mathcal{M}(\phi)\nabla \phi\Big)
 \cdot\nabla\varphi\mathrm{dx} \mathrm{dt}  \geq 0.
\end{eqnarray*} 
Finally letting $\delta\rightarrow 0^+$ then we obtain $\Lambda_2 =\sigma(\theta)T_\mathcal{M}(\phi)\nabla \phi$.

We conclude the Minty argument by taking $v=\theta-\delta \varphi$, with $\varphi\in\mathcal{D}(\Sigma_T)$,  
in (\ref{minty}).
 After dividing by $\delta>0$,
and finally letting $\delta\rightarrow 0^+$ we arrive to
 \begin{eqnarray*}
\int_{\Sigma_T}(\Lambda_3 -\gamma(\theta) |\theta|^{\ell-2}\theta )\varphi\mathrm{ds}\mathrm{dt} \geq 0, \quad\forall
\varphi \in\mathcal{D}(\Sigma_T),
\end{eqnarray*}
which implies that $\Lambda_3 =\gamma(\theta)|\theta|^{\ell-2}\theta$.

Therefore, the weak formulation (\ref{pbtt}) yields 
concluding the proof of Theorem \ref{tmain}.


\begin{thebibliography}{}

\bibitem{alt}
H.W. Alt, S. Luckhaus,
Quasilinear elliptic-parabolic differential equations,
\textit{Math. Z.} \textbf{183}  (1983), 311-341.

\bibitem{bana}
R.D. Banard, \textit{Thermoelectricity in metals and alloys}, Taylor and Francis Ltd.,
London 1972.

\bibitem{beni}
P. Benilan, P. Wittbold, 
On mild and weak solutions of elliptic-parabolic problems,
\textit{Adv. Diff. Equ.} \textbf{1} (1996), 1053-1073.

\bibitem{biler}
P. Biler, Existence and asymptotics of solutions for a parabolic-elliptic system with
nonlinear no-flux boundary conditions,
\textit{Nonlinear Anal.} \textbf{19} :12 (1992), 1121-1136.

\bibitem{caff}
L. A. Caffarelli, U. Stefanelli,
A counterexample to $C^{2,1}$ regularity for parabolic fully nonlinear equations,
\textit{Comm. Partial Differential Equations} \textbf{33} :7 (2008), 1216-1234.

\bibitem{cheu}
C.-Y. Cheung, C. Menictas, J. Bao, M. Skyllas-Kazacos, B.J. Welch, 
 Spatial temperature profiles in an aluminum reduction cell under different anode current  distributions,
 \textit{AIChE Journal} \textbf{59} :5 (2013), 1544-1556.

\bibitem{cim}
G. Cimatti,
Invariance of flows in doubly-connected domains with the same modulus,
\textit{Bollettino dell'Unione Matematica Italiana} \textbf{7} :3 (2014),  217-226.

\bibitem{colli}
P. Colli, L. Scarpa, 
Existence of solutions for a model of microwave heating,
\textit{Discrete   Contin. Dyn. Syst.} \textbf{36} :6 (2016), 3011-3034.

\bibitem{anona}
L. Consiglieri,
 Radiative effects for some bidimensional thermoelectric problems.
\textit{Adv. Nonlinear Anal.}  \textbf{5}  :4 (2016), 347-366.


\bibitem{lap2017}
L. Consiglieri,
{\em Quantitative estimates on boundary value problems.
Smallness conditions to thermoelectric and thermoelectrochemical problems},
 LAP LAMBERT Academic Publishing, Saarbr\"ucken 2017.


\bibitem{disser}
K. Disser, H.-C. Kaiser, J. Rehberg,
Optimal Sobolev regularity for linear second-order divergence elliptic operators occurring in real-world problems,
 {\em SIAM J. Math. Anal.} {\bf 47} :3 (2015), 1719-1746.

\bibitem{haller-knees}
 R. Haller-Dintelmann, A. Jonsson, D. Knees,  J. Rehberg,
Elliptic and parabolic regularity for second-order divergence operators with mixed boundary conditions,
\emph{Math. Methods Appl. Sci.} \textbf{39} :17 (2016), 5007-5026.


\bibitem{huang}
M.-J. Huang, R.-H. Yen, A.-B. Wang,
The influence of the Thomson effect on the performance of a thermoelectric cooler,
\emph{International Journal of Heat and Mass Transfer} \textbf{48} (2005) 413-418.

\bibitem{igbida}
J. Igbida, A. El Hachimi, A. Jamea, A. Asmaa,
Doubly non-linear elliptic-parabolic equations by Rothe's method,
\textit{British Journal of Mathematics and Computer Science} \textbf{4} :22 (2014), 3224-3235.

\bibitem{Ivanov-1982}
 A.V. Ivanov,
Second-order quasilinear degenerate and nonuniformly elliptic and parabolic equations.
 {\em Proc. Steklov Inst. Math.} {\bf 160} (1984),  1-288.

\bibitem{jaka}
W. J\"ager, J. Ka\v cur,
Solution of doubly nonlinear and degenerate parabolic problems by relaxation schemes,
\textit{RAIRO -- Mod\'elisation math\'ematique et analyse num\'erique} \textbf{29} :5 (1995), 605-627.

\bibitem{kacur}
 J. Ka\v cur,
 Solution to strongly nonlinear parabolic problems by a linear approximation scheme,
\textit{IMA J. Numer. Anal.} \textbf{19} (1999), 119-145.

  \bibitem{ll65}
J. Leray, J.L. Lions,
 Quelques r\'esultats de Vi\v sik sur les probl\`emes elliptiques non lin\'eaires par les m\'ethodes de
Minty-Browder. {\em Bull. Soc. Math. France} {\bf 93} (1965), 97-107.

\bibitem{lions}
 J.L. Lions,
 {\em Quelques m\'ethodes de r\'esolution des probl\`emes aux
limites non lin\'eaires.} Dunod et Gauthier-Villars, Paris 1969.

\bibitem{mcd}
 D.K.C. MacDonald,
{\em Thermoelectricity: An introduction to the principles}.
John Wiley \& Sons, Inc., New York 1962. 

\bibitem{mikula}
K. Mikula,
Numerical solution of nonlinear diffusion with finite extinction phenomenon,
\textit{Acta Math. Univ. Comenianae} \textbf{64} :2 (1995),  173-184.


\bibitem{morr}
C.B. Morrey jr, Second order elliptic equations in several variables and H\"o1der continuity,
\textit{Math. Zeitschr.} \textbf{72} (1959), 146-164.

\bibitem{plusweber}
V. Pluschke, F. Weber,
The local solution of a parabolic-elliptic equation with a nonlinear Neumann boundary condition,
\textit{Comment. Math. Univ. Carolin.} \textbf{40} :1 (1999), 13-38.

\bibitem{karel}
K. Rektorys, \textit{The method of discretization in time and partial
  differential equations}, D. Reidel, Dordrecht-Boston-London 1982.

\bibitem{stamp60}
 G. Stampacchia,
 Problemi al contorno ellipttici con datti discontinui dotati di soluzioni h\"olderiane.
{\em Ann. Math. Pura Appl.} {\bf 51} (1960), 1-32.

\bibitem{yuan}
G.-W. Yuan, X.-D. Hang,
Acceleration methods of nonlinear iteration for nonlinear parabolic equations,
\textit{J. Comput. Math.} \textbf{24} :3 (2006), 412-424.

\end{thebibliography}
\end{document}